\DeclareMathOperator{\coker}{coker}
\def\tTp{ \tT^+}
\def\tTm{ \tT^-}
\def\tTz{ \tT_\operatorname{\zero}}
\newcommand{\too}[1]{\overset{#1}\longrightarrow}
\def\module0{module$^\dagger$}
\def\Null{{\operatorname{Null}}}
\def\ker{\operatorname{ker}}
\def\pker{\operatorname{preker}}
\def\NTkerC{ \operatorname \ker_{\operatorname{N}}}
\def\ssemiring0{$s$-semiring$^\dagger$}
\newtheorem*{nothma}{\textbf{Theorem A}}
\newtheorem*{nothmb}{\textbf{Theorem B}}
\newtheorem*{nothmc}{\textbf{Theorem C}}
\newtheorem*{nothmd}{\textbf{Theorem D}}
\newtheorem*{nothmf}{\textbf{Theorem F}}
\newtheorem*{nothmg}{\textbf{Theorem G}}
\newtheorem*{oprblm}{\textbf{Open Problem}}
\newtheorem{theorem}{Theorem}[section]
\newtheorem{definition}[theorem]{Definition}
\newtheorem{lemma}[theorem]{Lemma}
\newtheorem{example}[theorem]{Example}
\newtheorem{remark}[theorem]{Remark}
\newcommand{\one}{\mathbb{1}}
\newcommand{\zero}{\mathbb{0}}
\newcommand{\trop}[1]{\mathcal{#1}}
\newcommand{\tG}{\trop{G}}
\newcommand{\tT}{\trop{T}}
\newcommand{\Hom}{Hom}
    \newenvironment{proof}{
    \smallskip
    \noindent\emph{Proof.}}{\hfill\(\Box\)
    \bigskip
    } \fi
\newcommand{\ifdef}[3]{\ifthenelse{\equal{#1}{true}}{#2}{#3}}
\definecolor{lgray}{gray}{0.90}
\def\Mod{\textbf{Mod}}
\def\BMod{$\mathbb B$-\textbf{Mod}}
\def\ctw{\cdot_{\operatorname{tw}}}
\def\vep{\varepsilon}
\def\mcS{\mathcal S}
\def\({\left(}
\def\){\right)}
\def\Z{{\mathbb Z}}
\def\Q{{\mathbb Q}}
\def\pipe{{\underset{{\ \, }}{\mid}}}
\def\vsemifield0{$\nu$-semifield$^\dagger$}
\def\vsemiring0{$\nu$-semiring$^\dagger$}
\def\pipe1{{\underset{{1}}{\mid}}}
\def\lmod1{\mathrel  \pipe1  \joinrel \joinrel =}
\def\CFunFF1{\operatorname{CFun} (F,F)}
\def\semiring0{semiring$^{\dagger}$}
\def\Semiring0{Semiring$^{\dagger}$}
\def\Semirings0{Semirings$^{\dagger}$}
\def\semidomain0{semidomain$^{\dagger}$}
\def\semifield0{semifield$^{\dagger}$}
\def\semifields0{semifields$^{\dagger}$}
\def\vsemifields0{$\nu$-semifields$^{\dagger}$}
\def\domain0{domain$^{\dagger}$}
\def\predomain0{pre-domain$^{\dagger}$}
\def\predomains0{pre-domains$^{\dagger}$}
\def\domains0{domains$^{\dagger}$}
\def\preceqh{\preceq_{\operatorname{hyp}}}
\def\vdomains0{$\nu$-domains$^{\dagger}$}
\def\domains0{domains$^\dagger$}
\newcommand{\etype}[1]{\renewcommand{\labelenumi}{(#1{enumi})}}
\def\eroman{\etype{\roman}}
\def\pipe{{\underset{{\tG}}{\mid}}}
\def\lmod{\mathrel  \pipe \joinrel \joinrel =}
\def\pipe{{\underset{{\tG}}{\mid}}}
\def\Ann{{\operatorname{Ann}}\,}
\newtheorem{thm}[theorem]{Theorem}
\newtheorem*{thm*}{Theorem}
\def\Hom{\operatorname{Hom}}
\newtheorem{lem}[theorem]{Lemma}
\newtheorem{rem}[theorem]{Remark}
\newtheorem{prop*}{Proposition}
\newtheorem{conj*}{Conjecture}
\newtheorem{prop}[theorem]{Proposition}
\newtheorem{defn}[theorem]{Definition}
\newtheorem*{examp*}{Example}
\newtheorem*{examples*}{Examples}
\newtheorem*{remark*}{Remark}
\newtheorem*{defn*}{Definition}
\newtheorem*{note*}{Note}
\def\la{\lambda}
\def\tT{\mathcal T}
\def\tTz{\tT_\zero}
\numberwithin{equation}{section}
\def\M0{M_{\zero}}
\def\AA{A}
\def\BA{B}
\def\PS{P}
\def\Cong{\Phi}
\def\Diag{{\operatorname{Diag}}}
\def\semirings0{semirings$^\dagger$}
\newcommand{\nPS}[1]{\PS_{(!#1)}}
\newcommand{\nPSo}[1]{\nPS{\one}}
\begin{document}


\title[Homology of systemic modules]
{Homology of systemic modules}


\author[J.~Jun]{Jaiung~Jun}
\address{Department of Mathematics, State University of New York at New Paltz, New Paltz, NY 12561, USA} \email{jujun0915@gmail.com}

\author[K.~Mincheva]{Kalina~Mincheva}
\address{Department of Mathematics
, Yale University, New Haven, CT 06511, USA}
\email{kalina.mincheva@yale.edu}

\author[L.~Rowen]{Louis Rowen}
\address{Department of Mathematics, Bar-Ilan University, Ramat-Gan 52900,
Israel} \email{rowen@math.biu.ac.il}

\makeatletter
\@namedef{subjclassname@2020}{%
	\textup{2020} Mathematics Subject Classification}
\makeatother

\subjclass[2020]{Primary  08A05,  14T10, 16Y60,  18A05, 18C10;
Secondary 08A30, 08A72, 12K10, 13C60, 18E05, 20N20}




\keywords{Category, system, triple, negation map, morphism, module,
symmetrization, congruence,  $(-)$-bipotent, height, idempotent,
supertropical algebra, tropical algebra,
  hypergroup,  exact
sequence, chain,   homology, projective, dimension, polynomial,
semiring, semifield, surpassing relation, semiexact category,
homological category.}

\thanks{\textbf{Acknowledgment:} J.J. was supported by an AMS-Simons travel
grant.}


\begin{abstract}
We develop the rudiments of a tropical homology theory, based on
``triples'' and ``systems.''  Results include a version of
Schanuel's lemma, projective dimension, the homology semi-module,
and a weak Snake lemma.
\end{abstract}

\maketitle

\tableofcontents




\section{Introduction}

 In this paper we explore homology theory for tropical mathematics.
This has been initiated in a strong paper of  Connes and Consani
\cite{CC2}, which we utilize, but we work more algebraically in the
 theory of triples and systems which was espoused in
\cite{JuR1, Row16}, as surveyed in \cite{Row17}, (and applied in
\cite{AGR, GaR,JMR}) in order to unify classical algebra with
 the algebraic theories of supertropical algebra, symmetrized semirings, hyperfields, and fuzzy rings.

 We briefly recall that we start with a set
  $\tT$ (often a multiplicative monoid), whose intrinsic additive structure is not
  rich enough to utilize established algebraic techniques. Two such
  examples related to tropical mathematics are
  the max-plus algebra, where we get into difficulties unless we eliminate the idempotent rule $a+a = a,$ and the hyperfield,
  whose addition requires passing to the power set; see \cite[\S 1.2]{Row17} for more
  examples.
   We embed  $\tT$ into
  a fuller algebraic structure $\mathcal A$ on which $\tT$ acts, which possesses a formal negation map
  $(-)$. ($\mathcal A, \tT, (-)$) is called a \textbf{triple}.   (\cite{CC2} utilizes a categorical ``involution'' which is much like the negation map
  here.)

The negation map is essential for developing a viable homology theory.
Semirings (such as idempotent semirings) often do not have a classical negation. 
In this paper we look at two natural ways to introduce a negation
map in the context of tropical mathematics. The first is simply
taking the negation map to be the identity, as done in supertropical
math \cite{IR} and idempotent mathematics.  The second is to
construct a negation map by a symmetrization functor, which recovers
much of \cite{CC2}, and moreover provides a decomposition into
positive and negative parts.

We consider a more general setting (allowing us to treat rings and
semirings at the same time), by replacing equality with a
``surpassing relation'' $\preceq$, given below in
Definition~\ref{precedeq07}. This relation restricts to equality on
$\tT$. It is reflexive and transitive but not symmetric on~$\mathcal
A$. A triple with such relation we call a \textbf{system}. The
surpassing relation $\preceq$ plays a key structural role.
\footnote{There is a related weaker relation $\nabla$, where $a_1
\nabla a_2$ if and only if $\zero \preceq a_1 (-) a_2$, which also
restricts to equality on $\tT$. The relation $\nabla$  is
reflexive and symmetric, but not transitive. 
According to experience, $\nabla$ has been more effective in linear
algebra \cite{AGG1,AGR}, but $\preceq$ serves better in developing
representation theory, and  features in \cite{JMR} as well as this
paper.}

In~\cite{JuR1} we have considered both ``ground systems'' which
often are semirings, and systemic modules over ground systems. In
this paper we investigate the rudiments of homology theory of
systemic modules over a given ground system. Although there is
justification in limiting the class of modules, as in
\cite{IzhakianKnebuschRowenSA}, here we consider all the modules.
\cite{CC2} already has a categorical theory, with the emphasis on
homological categories. Although one can just consider their
homomorphisms, the major role of the surpassing map $\preceq$
naturally leads us to study ``$\preceq$-morphisms,'' in which
$f(a+a')\preceq f(a) +f(a')$. These have already shown  up in the
hyperfield literature, and have a natural structure theory which
parallels the classical theory of homomorphisms. 
\par\medskip

In this paper we study chains of modules and long exact sequences, in terms of  the surpassing relation $\preceq$, and provide a systemic version of \cite{AIKN}.

One major obstacle in obtaining a meaningful homology theory for semirings is the lack of correspondence between ideals (modules) and kernels of homomorphisms (congruences). Classically, homology is defined to be "the kernel modulo the image" in a chain. In the semiring case however, the kernel is a congruence and the image does not necessarily correspond to the kernel of a homomorphism (i.e. is not a normal object), which makes the factor object hard to define at best. For this reason we need to work with congruences which are considerably more complicated, and cannot be viewed as morphisms in the original module category. Moreover, we often have to generalize congruences to precongruences, in order to overcome some of the difficulties in handling images and cokernels.

The discrepancy between the category of modules already has been addressed in the literature \cite{CC2,Flo,grandis2013homological}. One approach to homology is the use of the double arrow chain complex introduced by Patchkoria \cite{Pat} or Flores \cite{Flo}. However, Connes and Consani \cite{CC2} provide some troublesome examples \cite[Examples~4.9,4.14]{CC2} concerning this approach.

A categorical approach is taken by Connes and Consani \cite[Theorem~6.12]{CC2} who obtain as decisive a theorem as possible for modules over the Boolean semifield. In \S \ref{Grand} we obtain part of \cite[Theorem~6.12]{CC2} in the systemic setting of $\circ$-idempotent modules.

In this paper, we search for the precise algebraic notions which will permit us to obtain explicit versions of standard homological results such as Schanuel's lemma and the Snake Lemma. It does not seem possible to get the full analog of these results, but there are convincing partial results, which provide insight not available in the categorical approach.


\subsection{Main Results}$ $

  In \cite{JMR} we studied versions of projective modules and Schanuel's Lemma.
Here we continue with projective dimension via a version of
Schanuel's Lemma which is more technical but which fits in well with
the systemic environment, i.e., by means of negation maps and
symmetrization as in \cite{AGR}, \cite{Row16}, \S\ref{symmod}. Then
we study homology semigroups of chains of modules, in terms of
congruences and  $\preceq$-precongruences,  avoiding the pitfalls of
Remark~\ref{congprob}. We conclude by bringing in the categorical
viewpoint in  \S\ref{Grand}.\par\medskip

In order to state the main results we need the following definitions. Let the map $f:\mathcal M \to \mathcal M'$ be a $\preceq$-morphism of systemic modules over a semiring system~$\mathcal A$.
\begin{enumerate}
    \item The \textbf{null-module kernel} denoted $\ker_{\Mod,\mathcal M} f$ is defined as the preimage of the set $\{a \in \mathcal{M'}: a \succeq \zero\}.$
    \item The $\succeq$-\textbf{module image} of $f$ is the set $f(\mathcal M )_\succeq :=  \{ b'\in  \mathcal M' : f(b)      \succeq  b',\text{ for some } b\in  \mathcal M \}.$
    \item The \textbf{systemic cokernel} of $f: \mathcal M \to \mathcal M'$, denoted $\coker f_{\operatorname{sys}}$, is the set $\{[b]_f : b \in \mathcal M'\},$ where $[b]_f = \{ b' \in \mathcal M' : f(c) \preceq b' (-)b$, for some $c \in \mathcal M\}.$

\end{enumerate}

\begin{nothma} [Theorems~\ref{morphsym}, \ref{morphsym7}]
For any $\tT$-module $\mathcal A$, we can embed  $\mathcal
 A$ into $\widehat {\mathcal A}:= \mathcal A \oplus \mathcal A$ via $b \mapsto (b,\zero)$, thereby
 obtaining a faithful functor from the category of semirings into  the category of
 semirings with a negation map (and preserving additive
 idempotence). This yields a faithful functor from ordered semigroups to  signed $(-)$-bipotent
 systems. Any $\mathcal A$-module $\mathcal M$ yields a $\widehat{\mathcal A}$-module $\widehat{\mathcal M}= \mathcal M \oplus \mathcal M,$
 which has a signed
 decomposition   where $\mathcal M^+$ is the
 first component.
\end{nothma}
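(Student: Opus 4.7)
The plan is to define $\widehat{\mathcal A} := \mathcal A \oplus \mathcal A$ as a set, equip it with componentwise addition $(a_1,a_2) + (b_1,b_2) = (a_1 + b_1, a_2 + b_2)$ and the ``twisted'' (Grothendieck-style) multiplication
\[
(a_1,a_2) \cdot (b_1,b_2) \;=\; (a_1 b_1 + a_2 b_2,\; a_1 b_2 + a_2 b_1),
\]
which mimics the arithmetic of the formal difference $a_1 - a_2$. The negation map $(-)\colon \widehat{\mathcal A} \to \widehat{\mathcal A}$ is defined by $(-)(a_1,a_2) = (a_2,a_1)$. First I would check that this $(-)$ is a genuine negation map in the sense of the paper: it is an involution, it distributes over $+$ and $\cdot$, and $(a_1,a_2) + (-)(a_1,a_2) = (a_1 + a_2, a_1 + a_2)$ lies in the ``quasi-zero'' (or $\zero$-set) of $\widehat{\mathcal A}$, which is precisely the diagonal.

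Next I would verify the semiring$^\dagger$ axioms on $\widehat{\mathcal A}$. Additive associativity and commutativity are immediate from the componentwise sum; the multiplicative identity is $(\one,\zero)$; and distributivity plus associativity of the twisted product are routine but slightly tedious expansions matching the signs in $(a_1 - a_2)(b_1 - b_2)(c_1 - c_2)$. The embedding $\iota\colon \mathcal A \hookrightarrow \widehat{\mathcal A}$, $b \mapsto (b,\zero)$, is then a semiring homomorphism essentially by inspection (the twisted product collapses to the usual product on $\iota(\mathcal A)$), and it is injective, so the induced functor is faithful. Additive idempotence passes coordinatewise: if $a+a = a$ in $\mathcal A$, then $(a_1,a_2) + (a_1,a_2) = (a_1,a_2)$ in $\widehat{\mathcal A}$.

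For functoriality, a semiring map $f\colon \mathcal A \to \mathcal A'$ induces $\widehat f\colon \widehat{\mathcal A}\to \widehat{\mathcal A'}$ by $(a_1,a_2)\mapsto (f(a_1), f(a_2))$, and one checks this respects the twisted multiplication and commutes with $(-)$, giving functoriality. For the ordered-semigroup statement, one uses the total (or partial) order on $\mathcal A$ to identify the tangible elements $\tT = \iota(\mathcal A) \cup (-)\iota(\mathcal A)$ and transports the order to obtain the ``signed $(-)$-bipotent'' structure: for any $(a_1,a_2)$, either $a_1 \ge a_2$ or $a_2 \ge a_1$, producing a positive or negative ``sign'' (exactly the decomposition $\widehat{\mathcal A} = \mathcal A^+ \cup \mathcal A^- \cup \mathcal A^\circ$), and bipotence of $+$ transfers from $\mathcal A$ to each component.

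Finally, for an $\mathcal A$-module $\mathcal M$, define $\widehat{\mathcal M} := \mathcal M \oplus \mathcal M$ with componentwise addition, negation $(-)(m_1,m_2) = (m_2,m_1)$, and $\widehat{\mathcal A}$-action
\[
(a_1,a_2)(m_1,m_2) = (a_1 m_1 + a_2 m_2,\; a_1 m_2 + a_2 m_1).
\]
Verifying the module axioms is parallel to the ring case. The signed decomposition is then $\widehat{\mathcal M} = \mathcal M^+ \oplus \mathcal M^-$ with $\mathcal M^+ = \{(m,\zero) : m \in \mathcal M\}$ and $\mathcal M^- = (-)\mathcal M^+ = \{(\zero,m) : m\in \mathcal M\}$, and every element writes uniquely as $(m_1,\zero) + (\zero,m_2)$, matching the prescription in the statement. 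I expect the main technical obstacle to be bookkeeping the compatibility of $(-)$ with the twisted product (and its module analogue) and confirming that the surpassing relation $\preceq$ transported to $\widehat{\mathcal A}$ really makes $\iota$ a morphism of systems rather than merely of semirings; the categorical faithfulness then follows cleanly from injectivity of $\iota$ on objects and on hom-sets.
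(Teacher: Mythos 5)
Your construction of $\widehat{\mathcal A}$ with componentwise addition, the twist product, and the switch negation $(-)(a_1,a_2)=(a_2,a_1)$, together with the embedding $b\mapsto(b,\zero)$ and the induced functor $f\mapsto(f,\zero)$, matches the paper's approach for Theorem~\ref{morphsym}(i)--(iii), and the module part $\widehat{\mathcal M}=\mathcal M\oplus\mathcal M$ with the twist $\widehat{\mathcal A}$-action is the same as well. The verifications you call ``routine'' are precisely what the paper waves away as straightforward, so that portion is fine.

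Where you have a genuine gap is the claim that this \emph{same} construction yields a faithful functor from ordered semigroups to signed $(-)$-bipotent systems by ``transporting the order'' onto $\widehat{\mathcal A}$. With componentwise addition the symmetrized triple is \emph{not} $(-)$-bipotent, and the paper says so explicitly just before Example~\ref{surpsym}. Concretely, in max-plus notation $(2,\zero)+(\zero,3)=(2,3)$, which is neither $(2,\zero)$ nor $(\zero,3)$ and does not lie in $\mathcal A^+\cup\mathcal A^-\cup\mathcal A^\circ$; bipotence fails for mixed sums, and $\widehat{\mathcal A}$ is strictly larger than the three ``sign classes.'' Your sentence ``bipotence of $+$ transfers from $\mathcal A$ to each component'' is exactly the point that breaks. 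What the paper actually does (Example~\ref{surpsym}, due to Gaubert/AGG) is restrict the carrier to $\tT_{\operatorname{sym}}=\widetilde{\tT}\cup D$, where $\widetilde{\tT}=(\tT\oplus\{\zero\})\cup(\{\zero\}\oplus\tT)$ and $D$ is the diagonal, and then \emph{redefine} the mixed addition using the order, e.g.\ $(a_0,\zero)+(\zero,a_1)=(\zero,a_1)$ when $a_0<a_1$, $=(a_0,\zero)$ when $a_0>a_1$, and $=(a_1,a_1)$ when $a_0=a_1$. Only then is the triple $(-)$-bipotent with the disjoint decomposition $\tT^+\cup\tT^-\cup D$. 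That is the content of the ``obvious adjustment for addition'' in the proof of Theorem~\ref{morphsym7}, and it is not a cosmetic change: the resulting semiring is not a subsemiring of $\widehat{\mathcal A}$ under the componentwise addition you set up. Your proposal needs to state this modified addition and verify functoriality and bipotence against it; without that step the second sentence of Theorem~A is unproved.
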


\begin{nothmb} [Theorem~\ref{Sch}] (Semi-Schanuel, homomorphic-version)\label{Sch1}
 Let $\mathcal P   \overset {f }
\longrightarrow \mathcal M$ and $\mathcal P'  \overset {f'}
\longrightarrow \mathcal M'$ be $\preceq$-onto homomorphisms, where
$\mathcal P$ is $h$-projective  and $\mathcal P'$ is
$(\preceq,h)$-projective, and let $\mathcal K' = \ker_{\Mod,\mathcal
P'}f'.$ There is a $\preceq$-onto $\preceq$-splitting homomorphism $g:
\mathcal K' \oplus \mathcal P \to \mathcal P'$, with a
$\preceq$-isomorphism (i.e., $\preceq$-monic and $\preceq$-onto)
$\Phi: \mathcal K \to \mathcal K'' $, where $\mathcal K'' = \{
(b,b') \in \ker_{\Mod,\mathcal K' \oplus \mathcal P}g: b \preceq
\tilde \mu(b')\}$.
\end{nothmb}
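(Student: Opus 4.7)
The plan is to mimic the classical Schanuel argument, using the two projectivity hypotheses to build mutually ``almost inverse'' lifts $\mu$ and $\tilde\mu$, and then to transcribe the pullback construction into the $\preceq$-vocabulary of Definition~\ref{precedeq07}. Throughout I write $\mathcal K := \ker_{\Mod,\mathcal P} f$.

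First, invoke the $h$-projectivity of $\mathcal P$: since $f':\mathcal P'\to\mathcal M'$ is $\preceq$-onto, lift $f$ through $f'$ to obtain a homomorphism $\mu:\mathcal P\to\mathcal P'$ with $f'\circ\mu\preceq f$. Symmetrically, the weaker $(\preceq,h)$-projectivity of $\mathcal P'$ produces a lift $\tilde\mu:\mathcal P'\to\mathcal P$ with the analogous $\preceq$-compatibility $f\circ\tilde\mu\preceq f'$. Define $g:\mathcal K'\oplus\mathcal P\to\mathcal P'$ by $g(k',p):=k'+\mu(p)$; additivity and $\tT$-equivariance are immediate. For $\preceq$-ontoness, given $p'\in\mathcal P'$ set $p:=\tilde\mu(p')$ and $k':=p'\,(-)\,\mu(\tilde\mu(p'))$. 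Chaining the two lift-inequalities shows $f'(k')\succeq\zero$, so $k'\in\mathcal K'$, and $g(k',p)\succeq p'$. The same assignment $s(p'):=(k',\tilde\mu(p'))$ furnishes the $\preceq$-splitting $g\circ s\succeq\id_{\mathcal P'}$.

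Next, define $\Phi:\mathcal K\to\mathcal K'\oplus\mathcal P$ by $\Phi(b):=(\,(-)\mu(b),\,b\,)$. For $b\in\mathcal K$ one has $g(\Phi(b))=(-)\mu(b)+\mu(b)\succeq\zero$, so $\Phi(b)\in\ker_{\Mod,\mathcal K'\oplus\mathcal P}g$; the side condition $(-)\mu(b)\preceq\tilde\mu(b)$ demanded by $\mathcal K''$ follows by chaining $f\circ\tilde\mu\preceq f'$, $f'\circ\mu\preceq f$, the kernel relation $f(b)\succeq\zero$, and the signed identity $a(-)a\succeq\zero$. Hence $\Phi$ lands in $\mathcal K''$ and is a homomorphism because $\mu$ is. Reading the second coordinate gives $\preceq$-monicity. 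For $\preceq$-ontoness, a pair $(b,b')\in\mathcal K''$ satisfies $b\preceq\tilde\mu(b')$ together with $b+\mu(b')\succeq\zero$; these combine with the $\preceq$-compatibility of the lifts to force $f(b')\succeq\zero$, i.e.\ $b'\in\mathcal K$, and then $\Phi(b')$ matches $(b,b')$ up to the $\preceq$-tolerance already absorbed into the definition of $\mathcal K''$.

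The principal obstacle is keeping four asymmetric comparisons coherent: $f'\circ\mu\preceq f$, $f\circ\tilde\mu\preceq f'$, the side condition $b\preceq\tilde\mu(b')$ in $\mathcal K''$, and $g\circ s\succeq\id$. Because $\preceq$ is reflexive and transitive but not symmetric, each classical equality in the pullback diagram branches into two inequalities whose directions must be aligned with the convention $\zero\preceq a(-)a$. I expect to lean on the precise formulation of $(\preceq,h)$-projectivity from \cite{JMR}, together with the signed decomposition of Theorem~A, to guarantee that $\tilde\mu$ can be chosen realizing exactly the inequality packaged into $\mathcal K''$; should raw alignment fail, a symmetrization pass as in \S\ref{symmod} should absorb the discrepancy.
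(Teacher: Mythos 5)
Your proposal diverges from the paper's proof in a way that introduces a genuine gap rather than merely a different route.

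The phrase ``with the same notation as above'' in Theorem~\ref{Sch} refers to the paragraph preceding it, which fixes a $\preceq$-onto homomorphism $\mu:\mathcal M\to\mathcal M'$ and then defines $\tilde\mu:\mathcal P\to\mathcal P'$ as a lift of $\mu f$ through $f'$, satisfying $\mu f\preceq f'\tilde\mu$. You drop $\mu$ entirely and instead try to ``lift $f$ through $f'$,'' but $f$ has target $\mathcal M$ while $f'$ has target $\mathcal M'$, and the theorem does not assume $\mathcal M=\mathcal M'$; that lifting problem does not even typecheck. Your second lift $\tilde\mu:\mathcal P'\to\mathcal P$ has the same typing defect and, worse, points in the opposite direction from what the statement requires: in $\mathcal K''=\{(b,b'): b\preceq\tilde\mu(b')\}$ one has $b\in\mathcal K'\subseteq\mathcal P'$ and $b'\in\mathcal P$, so $\tilde\mu$ must go $\mathcal P\to\mathcal P'$, never $\mathcal P'\to\mathcal P$. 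Consequently your ``side condition $(-)\mu(b)\preceq\tilde\mu(b)$'' compares an element of $\mathcal P'$ with an element of $\mathcal P$.

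There are two further problems. First, you write the lift inequality as $f'\circ\mu\preceq f$ and $f\circ\tilde\mu\preceq f'$, but Definition~\ref{precproj} puts the original map on the \emph{small} side: an $h$-projective lift $\tilde f$ of $f$ through $h$ satisfies $f\preceq h\tilde f$. Reversing this direction breaks every subsequent estimate (e.g.\ you cannot conclude $f'(k')\succeq\zero$ from $f'\mu\preceq f$ when $f(\cdot)\succeq\zero$, since $\preceq$ does not push $\succeq\zero$ downward). Second, the role of $(\preceq,h)$-projectivity of $\mathcal P'$ in the paper is not to manufacture a second lift at all: it is invoked at the end, via \cite[Proposition 4.4]{JMR}, to conclude that the $\preceq$-onto homomorphism $g$ $\preceq$-splits. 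Your attempt to build the section $s$ by hand through a would-be $\tilde\mu:\mathcal P'\to\mathcal P$ is exactly the step that the cited proposition is there to replace. The closing appeal to symmetrization as a patch will not repair either the typing issue or the reversed inequalities, since those are structural rather than a matter of signs.
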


%


Next, we prove that a ``weak version'' of the Snake Lemma holds in our setting. Consider the following commutative diagram of $\preceq$-morphisms:
\begin{equation}\label{eq: sqr}
\begin{tikzcd}
\mathcal M' \arrow{r}{q}\arrow{d}{f} & \mathcal N'\arrow{d}{g}\arrow{r}{p} & \mathcal{L}'\arrow{d}{h}
\\
\mathcal M\arrow{r}{l} & \mathcal N \arrow{r}{r} & \mathcal{L}
\end{tikzcd}
\end{equation}

\begin{nothmc} [Theorem~\ref{snake1}]
Under technical conditions for \eqref{eq: sqr}, there exists a
natural $\tT_\mathcal{A}$-equivariant map $d: \ker_{\Mod,\mathcal
L'} h \to \coker f_{\operatorname{sys}}$ (given in the proof). An
analogous systemic version  provides a natural $\preceq$-morphism
$d: \ker_{\Mod,\mathcal L'} h \to \mathcal M$, if the $f(\mathcal
M')$-minimality condition of Definition \ref{reve1} holds.
\end{nothmc}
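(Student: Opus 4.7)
The plan is to mimic the classical snake construction, but with every equality loosened to an instance of $\preceq$, and then to verify that the resulting ambiguity is exactly what the cokernel equivalence of Theorem~A is designed to absorb. The ``technical conditions'' referred to in the statement should include (i) that $p$ is $\preceq$-onto, (ii) an exactness statement at $\mathcal N$ of the form $\ker_{\Mod,\mathcal N}\, r \subseteq l(\mathcal M)_\succeq$, and (iii) the analogous exactness $\ker_{\Mod,\mathcal N'}\, p \subseteq q(\mathcal M')_\succeq$ at $\mathcal N'$; these are the minimal hypotheses needed for the construction below to proceed.

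First I would define the map on elements. Given $x \in \ker_{\Mod,\mathcal L'} h$, so $h(x) \succeq \zero$, use~(i) to pick a lift $y \in \mathcal N'$ with $p(y) \succeq x$. Apply $g$, and use commutativity of the right square together with the $\preceq$-morphism property of $h$ to deduce $r(g(y)) \succeq h(p(y)) \succeq h(x) \succeq \zero$, so $g(y) \in \ker_{\Mod,\mathcal N}\, r$. By~(ii), choose $z \in \mathcal M$ with $l(z) \succeq g(y)$, and set $d(x) := [z]_f \in \coker f_{\operatorname{sys}}$. The $\tT_{\mathcal A}$-equivariance is then essentially automatic: every arrow in the diagram is $\tT_{\mathcal A}$-equivariant, and the lifts $t x \mapsto t y$ and $t g(y) \mapsto t z$ can be made in parallel with the $x$-lifts, so $d(t x) = t\, d(x)$.

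The crux is well-definedness of $[z]_f$ modulo the cokernel equivalence. Suppose $y_1, y_2$ are two lifts of $x$ and $z_1, z_2$ the corresponding elements of $\mathcal M$. The difference $y_1(-)y_2$ satisfies $p(y_1(-)y_2) \succeq \zero$ by the $\preceq$-morphism property of~$p$, so~(iii) produces $m' \in \mathcal M'$ with $q(m') \succeq y_1(-)y_2$. Commutativity of the left square yields $l(f(m')) \succeq g(q(m')) \succeq g(y_1(-)y_2)$, and this latter element is in turn surpassed by $l(z_1(-)z_2)$; the defining clause $f(c) \preceq b'(-)b$ of $[b]_f$, applied with $c = m'$, $b = z_2$, $b' = z_1$, then gives $[z_1]_f = [z_2]_f$. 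This shows $d$ is a well-defined $\tT_{\mathcal A}$-equivariant map into $\coker f_{\operatorname{sys}}$.

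For the strengthened conclusion that $d$ lifts to a $\preceq$-morphism $\ker_{\Mod,\mathcal L'}\, h \to \mathcal M$, the $f(\mathcal M')$-minimality condition of Definition~\ref{reve1} selects a canonical representative within each equivalence class $[z]_f$; one then checks, using the same diagram chase as above, that the minimal representative produced from any lift $y$ is the same element of $\mathcal M$, so $d$ descends to an honest $\preceq$-morphism $d\colon \ker_{\Mod,\mathcal L'}\, h \to \mathcal M$. The main obstacle will be the bookkeeping of slack: each arrow contributes an asymmetric $\succeq$, and because $\preceq$ is not symmetric, these inequalities cannot be freely inverted or cancelled. The proof must orchestrate the slack so that, after a full traversal of the diagram, everything accumulates inside $f(\mathcal M')$ (or, for the refined version, vanishes on the minimal representative) — this is precisely what the left-square commutativity and the exactness hypotheses (ii), (iii) are engineered to deliver, and it is where most of the technical work will lie.
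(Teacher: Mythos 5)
Your construction of $d$ is the same as the paper's, but the well-definedness argument has a genuine gap: your exactness hypothesis at $\mathcal N'$ runs in the wrong direction. Your hypothesis~(iii) asserts $\ker_{\Mod,\mathcal N'} p \subseteq q(\mathcal M')_\succeq$, which produces $m'$ with $q(m') \succeq y_1 (-) y_2$; together with commutativity of the left square and monotonicity of $g$, this gives $l(f(m')) = g(q(m')) \succeq g(y_1 (-) y_2)$. Separately you have $g(y_1 (-) y_2) \preceq l(z_1 (-) z_2)$. These two relations both place $g(y_1(-)y_2)$ \emph{below} each of $l(f(m'))$ and $l(z_1(-)z_2)$; since $\preceq$ is not symmetric, they cannot be chained to deduce $f(m') \preceq z_1(-)z_2$, which is precisely what you need in order to conclude $[z_1]_f = [z_2]_f$ from Definition~\ref{coke}.

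The paper avoids this by imposing the \emph{opposite} exactness at $\mathcal N'$: the top row is assumed $\preceq$-exact, so $\ker_{\Mod,\mathcal N'} p \subseteq q(\mathcal M')_\preceq$, and one chooses $m'$ with $q(m') \preceq y_1(-)y_2$. Then the chain runs uniformly in one direction, $l(f(m')) = g(q(m')) \preceq g(y_1(-)y_2) \preceq l(z_1(-)z_2)$, transitivity gives $l(f(m')) \preceq l(z_1(-)z_2)$, and $\preceq$-monicity of $l$ delivers $f(m') \preceq z_1(-)z_2$. You should also state explicitly that $p$, $g$, $l$ are homomorphisms, not merely $\preceq$-morphisms: the equalities $p(y_1(-)y_2) = p(y_1)(-)p(y_2)$ and $g(y_1(-)y_2) = g(y_1)(-)g(y_2)$ used above require this, and with only $\preceq$-morphisms the chain degrades further. (A minor slip: you call hypothesis~(i) ``$p$ is $\preceq$-onto,'' but the lift $p(y) \succeq x$ you then construct requires $\succeq$-onto, which is what the paper assumes.) Once the exactness direction and the homomorphism hypotheses are corrected, your argument coincides with the paper's.
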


\begin{nothmd} (Weak Snake Lemma) [Lemma~\ref{chai}, Theorem~\ref{theorem: snake lemma}]
Under technical conditions for \eqref{eq: sqr}, we have the sequence
of $\tT_\mathcal{A}$-modules with $\tT_{\mathcal{A}}$-equivariant
maps
\[
    \ker_{\Mod,\mathcal M'}{f} \too {\tilde q} \ker_{\Mod,\mathcal
        N'}{g} \too {\tilde p} \ker_{\Mod,\mathcal L'} h \overset{d}
    \longrightarrow \coker({f})_{\operatorname{sys}} \too {\bar
        l}\coker({g})_{\operatorname{sys}}\too {\bar
        r}\coker({h})_{\operatorname{sys}},
\]
satisfying the following properties:
\begin{enumerate} \eroman
    \item
    If the top row of \eqref{eq: sqr} is exact, then
    \[
    \tilde{q}(\ker_{\Mod,\mathcal M'}{f})_\succeq = \ker_{\Mod,\mathcal H}{\tilde{p}},
    \]
    where $\mathcal {H}=\ker_{\Mod,\mathcal
        N'}{g}$.
    \item
    \[
    \tilde{p}(\ker_{\Mod,\mathcal N'}{g}) \subseteq \{b \in \ker_{\Mod,\mathcal
        L'} h : d(b)=[\zero]\},
    \]
    where $[\zero]$ is the equivalence class of $\zero$ in $\coker(f)_{\operatorname{sys}}$.

    \item
    \[
    d(\ker_{\Mod,\mathcal L'} h) \subseteq \{[b] \in \coker(f)_{\operatorname{sys}} :
    \bar{l}([b])=[\zero]\},
    \] where $[\zero]$ is the equivalence class
    of $\zero$ in $\coker(g)$.

    \item
    If the bottom row of \eqref{eq: sqr} is exact, then
    \[
    \bar{l}(\coker(f)_{\operatorname{sys}}) \subseteq \{b' \in
    \coker(g)_{\operatorname{sys}} : \bar{r}(b')=[\zero]\},
    \]
    where $[\zero]$ is the equivalence class of $\zero$ in $\coker(h)_{\operatorname{sys}}$.
\end{enumerate}
\end{nothmd}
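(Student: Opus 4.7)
The plan is to mimic the classical diagram chase for the Snake Lemma, with equality replaced by the surpassing relation $\preceq$ wherever needed, and to use Theorem C to produce the connecting map $d$. First I would define the auxiliary maps: $\tilde q$ (resp.\ $\tilde p$) is the restriction of $q$ (resp.\ $p$) to the relevant null-module kernel; the commutativity of the left (resp.\ right) square in \eqref{eq: sqr} together with the fact that $\preceq$-morphisms preserve the property of surpassing $\zero$ shows that these restrictions take values in the next kernel. Then I would define the induced cokernel maps $\bar l([b]_f) := [l(b)]_g$ and $\bar r([b']_g) := [r(b')]_h$; well-definedness reduces to the observation that $l \circ f \preceq g \circ q$, so that translating representatives of $[b]_f$ by $l$ lands in $[l(b)]_g$.

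Next I would verify the four properties in turn. Property (i) is the only equality statement among the four and is where the $f(\mathcal M')$-minimality hypothesis of Definition \ref{reve1} invoked in Theorem C enters. The inclusion $\tilde q(\ker_{\Mod,\mathcal M'} f)_\succeq \subseteq \ker_{\Mod,\mathcal H} \tilde p$ is immediate from $p \circ q \preceq \zero$ (top-row exactness). For the reverse inclusion, take $b \in \ker_{\Mod,\mathcal N'} g$ with $p(b) \succeq \zero$; top-row exactness yields $c \in \mathcal M'$ with $q(c) \succeq b$, and then commutativity $l \circ f = g \circ q$ combined with the minimality hypothesis gives $f(c) \succeq \zero$, so $c \in \ker_{\Mod,\mathcal M'} f$ and $\tilde q(c) \succeq b$ as required. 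Property (ii) is a direct unwinding of the construction of $d$: for $b \in \ker_{\Mod,\mathcal N'} g$, the element $\tilde p(b) = p(b)$ admits the preimage $b$ itself under $p$, and $g(b) \succeq \zero$ means the $\mathcal M$-witness in the construction of $d$ may be taken to be $\zero$, forcing $d(\tilde p(b)) = [\zero]$.

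For property (iii), given $b \in \ker_{\Mod,\mathcal L'} h$, the construction of $d(b)$ produces $b' \in \mathcal N'$ with $p(b') \succeq b$ together with $c \in \mathcal M$ satisfying the defining $\preceq$-relation with $g(b')$, so that $d(b) = [c]_f$. Then $\bar l([c]_f) = [l(c)]_g = [g(b')]_g = [\zero]$, the second equality because $l(c)$ and $g(b')$ are $\preceq$-related by construction, and the third by definition of the equivalence class of $\zero$ in $\coker g_{\operatorname{sys}}$. Property (iv) is the simplest: exactness of the bottom row gives $r \circ l \preceq \zero$, hence $\bar r \bar l([c]_f) = [r(l(c))]_h = [\zero]$ directly.

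The main obstacle is controlling the asymmetry of $\preceq$. Each step of the classical chase produces an exact equation which here becomes a one-sided $\preceq$, and when such inequalities are composed the direction must remain consistent; the minimality hypothesis is the only tool that lets us \emph{reverse} a $\preceq$, and it is used precisely in property (i) where a genuine equality is needed. A secondary subtlety is that the cokernel classes $[b]_f$ are themselves defined asymmetrically (via $f(c) \preceq b'(-)b$), so pushing classes forward under $\bar l$ or $\bar r$ requires checking compatibility with the chosen direction of this asymmetric relation --- a routine but delicate check that is best performed once, up front, before entering the four verifications.
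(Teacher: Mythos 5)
Your overall strategy — define $\tilde q,\tilde p,d,\bar l,\bar r$, check they land where claimed, then verify the four properties by a diagram chase — matches the paper's route, and parts (iii) and (iv) are essentially the paper's arguments. But two of your steps rely on the wrong tool, and one of them genuinely fails as written.

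First, you credit the $f(\mathcal M')$-minimality condition of Definition \ref{reve1} as ``the only tool that lets us reverse a $\preceq$,'' and invoke it in part (i). That is not what the paper does, and it is not a hypothesis of Theorem~\ref{theorem: snake lemma}: minimality is only used in the optional ``minimal'' variant of the connecting map and the accompanying remark, not in the Weak Snake Lemma itself. The reversal you need is supplied instead by the standing hypothesis that $l$ is a $\preceq$-\emph{monic homomorphism}. Concretely, in (i), for $a\in\ker_{\Mod,\mathcal H}\tilde p$ one picks $x\in\mathcal M'$ with $q(x)=a$ by exactness, computes $l(f(x))=g(q(x))=g(a)\succeq\zero=l(\zero)$, and then $\preceq$-monicity of $l$ gives $f(x)\succeq\zero$, hence $x\in\ker_{\Mod,\mathcal M'}f$. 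Your forward inclusion in (i) is also short: showing $b\in\ker_{\Mod,\mathcal H}\tilde p$ requires verifying \emph{both} $p(b)\succeq\zero$ and $b\in\mathcal H=\ker_{\Mod,\mathcal N'}g$; the second condition does not follow from $pq$ being null but from the chase $g(b)=g(q(u))=l(f(u))\succeq l(\zero)\succeq\zero$.

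Second, and more seriously, your argument for (ii) is wrong as stated. You claim that because $g(b)\succeq\zero$ the ``$\mathcal M$-witness in the construction of $d$ may be taken to be $\zero$.'' The construction of $d$ requires a witness $u_0\in\mathcal M$ with $l(u_0)\succeq g(v_0)$. Taking $u_0=\zero$ would require $\zero=l(\zero)\succeq g(v_0)$, i.e., $g(v_0)\preceq\zero$; but what you actually have is $\zero\preceq g(v_0)$, which is the opposite one-sided statement (these do not coincide since $\preceq$ is not symmetric). The correct argument, as in the paper, is again via $\preceq$-monicity: $\succeq$-exactness of the bottom row yields \emph{some} $u_0$ with $l(u_0)\succeq g(v_0)\succeq\zero=l(\zero)$, whence $u_0\succeq\zero$ by $\preceq$-monicity of $l$, and then $f(\zero)=\zero\preceq u_0=u_0(-)\zero$ shows $[u_0]=[\zero]$ in $\coker f_{\operatorname{sys}}$. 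So the fix for (ii) is to replace ``take the witness to be $\zero$'' with ``use $\preceq$-monicity of $l$ to conclude that whatever witness $\succeq$-exactness provides is itself null.''
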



\begin{nothmf}[Theorem~\ref{catview}]
Suppose $\mathcal{C}$ is a pointed category. Define $\widehat {\mathcal{C}}$ to have
\begin{enumerate}
    \item
    Objects; the pairs $(A,A)$ for each object $A$ of $\mathcal C$.
    \item
    Morphisms; for $(A,A), (B,B) \in \emph{Obj}(\widehat{\mathcal C})$,
    \[
    \Hom_{\widehat{\mathcal{C}}}((A,A),(B,B))=\{(f,g) \mid f,g \in \Hom_\mathcal{C} (A,B)\}.
    \]
\end{enumerate}
The composition of two morphisms are given by the \textbf{twist
    product}, that is, for $(f_1,f_2):(B,B) \to (C,C)$
and $(g_1,g_2):(A,A) \to (B,B)$,
\[
(f_1,
f_2)(g_1,g_2) = ( f_1 g_1 + f_2 g_2,  f_2 g_1+  f_1 g_2)
\]
Define the \textbf{switch} $(-)_{\operatorname{sw}}(f_1, f_2) = (f_2,f_1)$. Then $\widehat {\mathcal{C}}$ is a category with negation functor $(-)_{\operatorname{sw}},$ and there is a faithful functor $F:{\mathcal{C}} \to \widehat {\mathcal{C}}$ such that $F(A)=(A,A)$ for an object $A$, and $F(f)=(f,0_A)$ for $f \in \Hom_\mathcal{C} (A,B)$.
\end{nothmf}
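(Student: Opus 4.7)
My plan is to verify in turn: (i) $\widehat{\mathcal C}$ is a category under the twist product, (ii) the switch $(-)_{\operatorname{sw}}$ satisfies the negation-functor axioms (the involution $(-)^2=\id$ together with the Leibniz-type rule $(-)(fg)=((-)f)g=f((-)g)$ familiar from the ring-theoretic negation maps of the paper), and (iii) $F$ is a faithful functor. Throughout I would use that for the twist product to be defined at all, the Hom-sets of $\mathcal C$ must carry a commutative-monoid addition with $0$ absorbing under composition; this is implicit in the pointed-category setting of the paper, arising from the ambient semiring structure in all the examples of interest.

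For (i), I take $\id_{(A,A)}:=(\id_A,0_A)$ and compute
\[(f_1,f_2)(\id_A,0_A) = (f_1\id_A + f_2\cdot 0_A,\ f_2\id_A + f_1\cdot 0_A) = (f_1,f_2),\]
with the symmetric calculation on the other side. Associativity reduces to checking that both $\bigl((f_1,f_2)(g_1,g_2)\bigr)(h_1,h_2)$ and $(f_1,f_2)\bigl((g_1,g_2)(h_1,h_2)\bigr)$ expand to
\[(f_1g_1h_1+f_2g_2h_1+f_2g_1h_2+f_1g_2h_2,\ f_2g_1h_1+f_1g_2h_1+f_1g_1h_2+f_2g_2h_2),\]
which matches term-by-term once one invokes bilinearity of composition over addition and the commutativity of addition. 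This eight-term bookkeeping is the step I expect to be the main obstacle in practice, in the sense that it is the most error-prone, though it is entirely mechanical.

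For (ii), $(-)_{\operatorname{sw}}^2=\id$ is immediate from $(f_1,f_2)\mapsto(f_2,f_1)\mapsto(f_1,f_2)$. For the Leibniz rule,
\[(-)_{\operatorname{sw}}\bigl((f_1,f_2)(g_1,g_2)\bigr) = (f_2g_1+f_1g_2,\ f_1g_1+f_2g_2) = (f_2,f_1)(g_1,g_2) = \bigl((-)_{\operatorname{sw}}(f_1,f_2)\bigr)(g_1,g_2),\]
and the analogous computation yields the same expression as $(f_1,f_2)\bigl((-)_{\operatorname{sw}}(g_1,g_2)\bigr)$ after a single application of commutativity of addition in each component. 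Additivity of $(-)_{\operatorname{sw}}$ with respect to the componentwise sum on Hom-sets is immediate.

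For (iii), using $0$-absorption under composition,
\[F(f)\circ F(g) = (f,0_B)(g,0_A) = (fg + 0_B\cdot 0_A,\ 0_B\cdot g + f\cdot 0_A) = (fg,0_A) = F(fg),\]
and $F(\id_A)=(\id_A,0_A)=\id_{F(A)}$, so $F$ is a functor. Faithfulness is immediate since $(f,0_A)=(g,0_A)$ forces $f=g$. The only genuine subtlety, beyond the mechanical verifications, is ensuring that the ambient pointed category $\mathcal C$ carries enough structure for the twist product to be well-defined; granted a Hom-enrichment in commutative monoids with an absorbing $0$, as is implicit for the systemic module categories of the paper, everything assembles straightforwardly.
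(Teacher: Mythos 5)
Your overall strategy matches the paper's (the paper splits the content of this Theorem into several propositions in \S\ref{Grand} before stating Theorem~\ref{catview} itself, but the substance is the same). Your verifications that $\widehat{\mathcal C}$ is a category (identity $(\id_A,0_A)$, the eight-term associativity check) and that $F$ is a faithful functor are correct and essentially identical to the paper's.

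There is, however, a genuine gap in your treatment of the negation functor. In Definition~\ref{Definition; negation functor} a negation functor is defined \emph{relative to a choice of categorical ideal $N$}, and the paper's Remark immediately afterwards emphasizes that whether a functor is a negation functor depends on that choice. The definition has three axioms: (i) the involution $(-)((-)f)=f$; (ii) the Leibniz rule $(-)(fg)=((-)f)g=f((-)g)$; and (iii) $f(-)f\in N$ for every morphism $f$. You verify (i) and (ii) correctly, but you never say what $N$ is for $\widehat{\mathcal C}$, never verify that it is a categorical ideal, and never check (iii). The paper supplies exactly these missing steps: one takes $N_{\widehat{\mathcal C}}=\{(f,f): f\in\Hom_{\mathcal C}(A,B)\}$, shows via the twist-product formula that composing with a diagonal morphism on either side lands in $N_{\widehat{\mathcal C}}$ (so it is a categorical ideal), and then checks (iii) by computing $(f_1,f_2)(-)_{\operatorname{sw}}(f_1,f_2)=(f_1+f_2,\,f_1+f_2)\in N_{\widehat{\mathcal C}}$. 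Without identifying $N$ and verifying (iii), your argument does not establish that $(-)_{\operatorname{sw}}$ is a negation functor in the paper's sense. (Your added observation that $(-)_{\operatorname{sw}}$ is additive on Hom-sets is harmless but is not among the axioms and does not substitute for axiom (iii).)
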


\begin{nothmg}[Theorem~\ref{proposition: modules semiexact}]
Let $(\mathcal A,\tT, (-))$ be a triple, and
$\textbf{Mod}_{\mathcal{A},h}$ be the subcategory of
$\textbf{Mod}_\mathcal{A}$ with:
    \begin{enumerate}
        \item
    Objects: The 
    $\mathcal{A}$-modules $\mathcal M$ for which $(b^\circ)^\circ = b^\circ$ for each $b \in \mathcal M$.

       \item
    Morphisms: homomorphisms of $\mathcal{A}$-modules.
    \end{enumerate}
    With $N$ as in Example \ref{example: A-mod}, $\textbf{Mod}_{\mathcal{A},h}$ is a semiexact category.
\end{nothmg}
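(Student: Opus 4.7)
The plan is to verify the defining properties of a semiexact category in the sense of Grandis inside $\textbf{Mod}_{\mathcal{A},h}$: a pointed category, a closed ideal $N$ of null morphisms, and existence of $N$-kernels and $N$-cokernels for every morphism, all realized within the subcategory.

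First I would verify the pointed structure: the trivial $\mathcal{A}$-module $\{\zero\}$ is a zero object, trivially satisfies $(b^\circ)^\circ=b^\circ$, and so lies in $\textbf{Mod}_{\mathcal{A},h}$. Then I would take $N$ to be the ideal specified in Example~\ref{example: A-mod}, namely the homomorphisms $f:\mathcal{M}\to\mathcal{M}'$ whose image lies in $\{a\in\mathcal{M}':a\succeq\zero\}$, and check that $N$ is a two-sided ideal. Pre-composition with any $\mathcal{A}$-module homomorphism is immediate; post-composition uses that $\mathcal{A}$-module homomorphisms preserve $\succeq$, since $\succeq$ is built from the additive structure and the negation map, both of which are preserved.

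Next I would construct the $N$-kernel of $f:\mathcal{M}\to\mathcal{M}'$ as the null-module kernel $\ker_{\Mod,\mathcal{M}}f$ from the introduction. This is an $\mathcal{A}$-submodule of $\mathcal{M}$, and the $\circ$-idempotency condition is inherited by submodules, so it lies in $\textbf{Mod}_{\mathcal{A},h}$. Its universal property is direct: if $h:\mathcal{P}\to\mathcal{M}$ satisfies $fh\in N$, then $f(h(p))\succeq\zero$ for every $p\in\mathcal{P}$, so $h$ factors uniquely through the inclusion $\ker_{\Mod,\mathcal{M}}f\hookrightarrow\mathcal{M}$.

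The main obstacle is the $N$-cokernel, since quotients in $\textbf{Mod}_\mathcal{A}$ are controlled by congruences rather than submodules, and one must show that the $\circ$-idempotency condition survives the quotient. I would define $\coker_N f$ as $\mathcal{M}'/{\sim}$, where $\sim$ is the $\mathcal{A}$-module congruence generated by $f(b)\sim f(b)+f(b)^\circ$ for $b\in\mathcal{M}$, which forces $f(\mathcal{M})$ into the null part. The hypothesis $(b^\circ)^\circ=b^\circ$ is precisely what makes the iterated circle operations stabilize, so the quotient module again satisfies the $\circ$-idempotency condition and thus lies in $\textbf{Mod}_{\mathcal{A},h}$. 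The universal property then follows from the standard correspondence between module-congruences and quotient homomorphisms: any $h:\mathcal{M}'\to\mathcal{P}$ with $hf\in N$ sends each $f(b)$ to a null element, so it respects $\sim$ and factors uniquely through $\coker_N f$. Checking that the factored morphism is again in $\textbf{Mod}_{\mathcal{A},h}$ is the delicate point, and it is here that the defining hypothesis on the objects of $\textbf{Mod}_{\mathcal{A},h}$ is indispensable.
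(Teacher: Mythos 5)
Your $N$-kernel construction matches the paper's and is correct, and your general strategy for the cokernel — quotient by an explicitly generated module congruence and then use $\circ$-idempotence to verify the universal property — is a reasonable alternative to the paper's Connes--Consani-style definition (where $\Cong$ is taken implicitly as the equalizer of all homomorphisms that kill $f(X)$). However, the generating relation you chose, $f(b)\sim f(b)+f(b)^\circ$, does not do what you claim. In the quotient this gives $[f(b)] = [f(b)] + [f(b)]^\circ$, which does not force $[f(b)]$ to be a quasi-zero; indeed in the extreme classical case where $c^\circ = \zero$ for all $c$ your relation is tautological, the congruence it generates is trivial, and the quotient is just $\mathcal{M}'$, so $(\coker f)\circ f$ would not be null. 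Your relation $c = c+c^\circ$ would be equivalent to $c = c^\circ$ only under additive idempotence of the whole module, which is strictly stronger than the $\circ$-idempotence the theorem hypothesizes (the latter only forces $c^\circ + c^\circ = c^\circ$, i.e., idempotence of quasi-zeros, not of all elements).

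The relation you want is $f(b)\sim f(b)^\circ$. Then $[f(b)] = [f(b)]^\circ$ lies in the quasi-zero submodule of the quotient, so $(\coker f)\circ f$ factors through it and is null. The universal property then does go through: if $hf\in N$ then $hf(b)\in Z^\circ$, say $hf(b)=c^\circ$, and $\circ$-idempotence of $Z$ gives $h(f(b)^\circ)=(hf(b))^\circ=(c^\circ)^\circ=c^\circ=hf(b)$, so $h$ respects the generating relation. With this fix your construction should coincide with the paper's universal congruence $\Cong$; as written, though, the key assertion that your congruence "forces $f(\mathcal{M})$ into the null part" is false.
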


\par\medskip

\section{Basic notions}\label{BN}

  See \cite{Row17} for a relatively brief introduction; more
details are given in \cite{JMR}, \cite{JuR1}, and \cite{Row16}.
Throughout the paper, we let $\mathbb{N}$ be the additive monoid of
nonnegative integers. Similarly, we view $\mathbb{Q}$
(resp.~$\mathbb{R}$) as the additive monoid of rational numbers
(resp.~real numbers).

A \textbf{semiring} (cf.~\cite{Cos},\cite{golan92}) $(\mathcal A, +,
\cdot, 1)$ is an additive commutative semigroup $(\mathcal A, +,
\zero)$ and multiplicative monoid $(\mathcal A, \cdot, \one)$
satisfying $\zero b = b \zero = \zero$ for all $b \in \mathcal A$,
as well as the usual distributive laws.

The semiring predominantly used in tropical mathematics has been the
max-plus algebra, where  $\oplus$ designates $\max$, and $\odot $
designates $+$. However,
 we proceed with the familiar algebraic notation of addition and
multiplication in whichever setting under consideration. A
\textbf{magma} $\tT$ is a set with binary multiplication
\footnote{In order not to get caught up in later complications, but
not  following the convention in \cite[Definition 1.1]{Bo}, we
 adjoin a formal absorbing element $\zero$
 to $\tT$, i.e.,  $a\zero =  \zero a =  \zero $, for all $a \in
\tT$.}. We need not assume that multiplication in $\mathcal A$ is
 associative, so that we may treat ``Lie algebra homology'' in later
 work.
\subsection{$\tT$-modules}$ $

 We review the definitions throughout the next three subsections for the reader's
convenience. We assume that $\tT$ is a magma.

\begin{defn}\label{modu2}
A (left) $\tT$-\textbf{module}
 is a set $( \mathcal S,+,\zero)$ with a zero element (denoted by $\zero$) and scalar
multiplication $\tT\times \mathcal S \to \mathcal S$ satisfying the
following axioms, for all $a_i\in \tT$ and $b, b_j \in \mathcal S$:

\begin{enumerate}\eroman
\item
$(a_1 a_2) b = a_1(a_2 b).$
\item
$a \zero=\zero a = \zero.$
\item
$a (\sum _{j=1}^u b_j) = \sum _{j=1}^u (a b_j), \ a \in \mathcal A,$
\item
Let $\mathcal S_1$ and $\mathcal S_2$ be $\tT$-modules. A homomorphism $f: \mathcal S_1 \to \mathcal S_2$ is a function such that $f(ta)=tf(a)$, $f(a+b)=f(a)+f(b)$ for all $t \in \tT$ and $a,b \in \mathcal S_1$.
\end{enumerate}


\end{defn}

%


\begin{rem}
One example treated in the literature is the property that $b+b' =
\zero$ implies $b = b' = \zero,$ called \textbf{zero sum free}
in~\cite{golan92} and \textbf{lacking zero sums}  in \cite{IKR6},
also treated in \cite{CC2}. This property holds in tropical
mathematics, as well as numerous other situations, as indicated  in
\cite[Examples~1.9]{IKR6}. Using this property one can recover some
analogues to classical results such as all strongly projective modules
being direct sums of cyclic strongly projective modules; However,
this assumption is too strong to result in a useful homology theory, and
will not be pursued in this paper.
\end{rem}

\subsection{Triples}$ $

\begin{defn}\label{negmap}
 A \textbf{negation map} on  a $\tT$-module $\mathcal A$
is a semigroup isomorphism $(-)_\mathcal A :\mathcal A \to \mathcal
A$ of order~$\le 2,$  written $a\mapsto (-) a$, together with a map
$(-)_\mathcal T$  of order~$\le 2$ on $ \tT$ which also
 respects the $\tT$-action in the sense that
$$((-)_\mathcal T a)b = (-) (ab) = a ((-) b)$$ for $a \in \tT,$ $b \in \mathcal A.$ When the context is clear, we drop the
subscripts and simply denote the negation map by~$(-)$.
\end{defn}

\begin{lem}\label{circget11}
Let $\mathcal A$ be a $\tT$-module with a negation map. Then
$(-)\zero = \zero.$
\end{lem}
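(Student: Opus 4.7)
The plan is to exploit only the semigroup-isomorphism and order-$\le 2$ parts of Definition \ref{negmap}, together with the fact that $\zero$ is the additive neutral element of the $\tT$-module $\mathcal A$. Since $(-)$ respects the addition, I would first apply it to the identity $a + \zero = a$ (which holds because $\zero$ is the additive identity of $(\mathcal A,+,\zero)$) to obtain, for every $a \in \mathcal A$,
\[
(-)a \;=\; (-)(a+\zero) \;=\; (-)a + (-)\zero.
\]

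The next step is to specialize this at $a := (-)\zero$. Writing $b := (-)\zero$ for brevity, the order-$\le 2$ hypothesis gives $(-)b = (-)(-)\zero = \zero$. Substituting into the previous identity yields
\[
\zero \;=\; (-)b \;=\; (-)b + (-)\zero \;=\; \zero + b \;=\; b,
\]
so $(-)\zero = b = \zero$, as claimed. Note that this argument sidesteps any additive-cancellation assumption on $\mathcal A$, which is essential, since the semirings motivating this framework (e.g., max-plus and other idempotent examples) generally fail to be cancellative; a naive ``cancel $(-)a$ from both sides'' approach would therefore not be available.

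The one mildly subtle point I would flag is confirming that $\zero$ is in fact a two-sided additive neutral element of $\mathcal A$, not merely an absorbing element under the $\tT$-action. Axiom (ii) of Definition \ref{modu2} only asserts $a\zero = \zero a = \zero$, while the neutrality $a+\zero = \zero + a = a$ is built into the standing convention that $(\mathcal A,+,\zero)$ denotes a commutative semigroup with zero element. Once that convention is acknowledged the proof is essentially two lines, and no use of the $\tT$-action on $\mathcal A$ is required. The only ``obstacle'' worth mentioning is resisting the temptation to invoke cancellation; the trick of specializing the derived identity at $a=b$ is what bypasses this.
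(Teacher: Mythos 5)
Your proof is correct, but it takes a genuinely different route from the paper's. The paper's argument is a one-liner through the $\tT$-action: pick any $a \in \tT$; then $(-)\zero = (-)(a\zero) = ((-)a)\zero = \zero$, using the absorption axiom $a\zero = \zero$ of Definition~\ref{modu2} together with the compatibility $(-)(ab) = ((-)a)b$ built into Definition~\ref{negmap}. Your argument stays entirely inside the additive structure, using that $(-)$ is an additive semigroup isomorphism of order $\le 2$ and that $\zero$ is the additive neutral element, never touching the $\tT$-action. Each choice has a real advantage. Your version does not need $\tT$ to be nonempty, and it makes visible that the claim is really a statement about involutive automorphisms of a commutative monoid (indeed, surjectivity alone would already suffice: the identity $(-)a = (-)a + (-)\zero$ for all $a$ says $(-)\zero$ is a right identity on the image of $(-)$, which is all of $\mathcal A$). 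The paper's version is shorter and---relevant to the subtlety you flag---relies only on the explicitly stated absorption axiom (ii), rather than on $\zero$ being the additive identity, which, while the intended convention, is not spelled out as an axiom in Definition~\ref{modu2}. Both proofs are sound.
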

\begin{proof} Pick any $a\in \tT.$ Then $(-)\zero = (-)(a\zero) = ((-)(a))\zero
=
\zero.$
\end{proof}
We write $ b_1 (-)b_2 $ for $b_1+ ((-)b_2)$, and $ b ^\circ$ for $ b
(-)b$, called a \textbf{quasi-zero}. $(-b)$ is called the
 \textbf{quasi-negative} of $b$.
 An assortment of negation maps is given in
\cite{GaR,JuR1,Row16}.
 When $\one \in \tT$, the
negation map is given simply
 by $(-)b = ((-)\one)_{\tT} b$
for $b \in \mathcal A.$

\begin{rem}
Let $\mathcal A$ be a $\tT$-module with a negation map. Then
    $$ (-) b ^\circ=(-)(b(-)b)=   ((-)b)+b  =   b ^\circ.$$
In other words, quasi-zeros are fixed under any negation map.
\end{rem}

 The set $\mathcal A
^\circ$ of quasi-zeros is a $\tT$-submodule of $\mathcal A $ that
plays an important role. When $\mathcal A$ is a semiring,  $\mathcal
 A^\circ$ is an ideal.

\begin{note*}
In this paper, we always assume that   $\mathcal A$ is a
$\tT$-module, and also
 assume that $\tT$ is a multiplicative monoid,
leaving the Lie theory for later.
 When $\mathcal A$ is a
semiring, we essentially have Lorscheid's blueprints,
\cite{Lor1,Lor2}. We can make a $\tT$-module with associative
multiplication into a semiring by means of
\cite[Theorem~2.9]{Row16}.

In general, $\tT$ need not be a subset of $\mathcal A$. For example,
let $\tT=\mathbb{N}$ and $\mathcal{A}$ be any monoid. In this case,
$\mathcal A$ is equipped with a natural $\tT$-module structure,
although $\tT$ is not a subset of $\mathcal A$. In what follows, we
will further assume that $\tT$ is indeed a subset of $\mathcal A$,
starting from the following definition.\end{note*}

 \begin{defn}[\cite{Row16}]\label{sursys0}
A  \textbf{pseudo-triple} $(\mathcal A, \tT, (-))$ is a $\tT$-module
$\mathcal A$, with $\tT \subseteq \mathcal A$, called the set of
\textbf{tangible elements}, and a negation map $(-)$ on $\mathcal
A$. We write $\tTz$ for $\tT \cup \{\zero\}.$

 A
\textbf{triple} is a $\tT$-pseudo-triple, in which $\tT \cap
\mathcal A^ \circ = \emptyset$ and $\tTz$ generates $( \mathcal
A,+).$ 
\end{defn}

\begin{defn}\label{definition: height 2 module} \begin{enumerate}
 \eroman
 \item
Recall from \cite[Definition~1.28]{Row16}   that $\tT$-triple
$(\mathcal A, \tT, (-))$  is \textbf{metatangible} if $a + b \in
\tT$ whenever $a, b \in \tT$ with $b \neq (-) a.$ The main special
case: we call a triple $(-)$-\textbf{bipotent} if $a+ a' \in
\{a,a'\}$ whenever $a' \ne (-) a.$

 \item A  $(-)$-bipotent triple $(\mathcal A, \tT, (-))$ has
\textbf{height} 2 if $\mathcal A =   \tT \cup  \mathcal A^ \circ$.
  \end{enumerate}
   \end{defn}

Both the supertropical semiring \cite{IR} and the symmetrization of
an ordered group (viewed as an idempotent semiring), cf.
\S\ref{symmod} below, are  $(-)$-bipotent of height 2.

\begin{lem} For any  $(-)$-bipotent triple $(\mathcal A, \tT, (-))$
of \textbf{height} 2, $b^\circ +b=b ^\circ $ for all $b \in \mathcal
A$, and thus also in any free $\mathcal A$-module.
\end{lem}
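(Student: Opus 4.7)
The plan is to split on $b$ using the height~2 decomposition $\mathcal{A} = \tT \cup \mathcal{A}^\circ$, handle the tangible and quasi-zero cases separately using $(-)$-bipotence and the $\tT$-submodule structure of $\mathcal{A}^\circ$, and then lift to free modules componentwise.

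First I would treat the generic tangible case: for $b \in \tT$ with $b \ne (-)b$, the $(-)$-bipotent axiom with $a = a' = b$ (whose hypothesis $a' \ne (-)a$ is met) gives $b + b = b$, so by associativity and commutativity
\[
b^\circ + b = (b + (-)b) + b = (b + b) + (-)b = b + (-)b = b^\circ.
\]

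Next I would handle $b \in \tT$ with $b = (-)b$. Here $b^\circ = b+b$ and the target is $b+b+b = b+b$. Setting $t := b+b+b$, the element $t$ is self-negating, since $(-)t = 3(-)b = t$. By height~2, $t \in \tT \cup \mathcal{A}^\circ$, and I would rule out $t \in \tT$ as follows: if $t$ were tangible with $t \ne b$, then $(-)$-bipotence applied to the tangibles $t$ and $b$ (with $(-)t = t \ne b$) would force $t + b \in \{t,b\} \subseteq \tT$; but directly $t + b = b^\circ + b^\circ \in \mathcal{A}^\circ$ by the submodule closure of $\mathcal{A}^\circ$, contradicting $\tT \cap \mathcal{A}^\circ = \emptyset$. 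Hence $t \in \mathcal{A}^\circ$, and to identify $t$ with the specific quasi-zero $b^\circ$ I would compare $(-)b + t$ with $(-)b + b^\circ$, both evaluating to $b^\circ + b^\circ$, and combine this with the quasi-zero relations available in the height~2 setting.

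The case $b \in \mathcal{A}^\circ$ reduces to the tangible case. Since $\tTz$ generates $(\mathcal{A}, +)$ and $(c_1 + c_2)^\circ = c_1^\circ + c_2^\circ$, any $b \in \mathcal{A}^\circ$ may be written $b = \sum_i c_i^\circ$ with $c_i \in \tT$, and then (using $(-)b = b$)
\[
b^\circ + b = \sum_i (c_i^\circ + c_i) = \sum_i c_i^\circ = b^\circ
\]
by the tangible computations above. Finally, the free-module statement is componentwise: for $m = \sum_j b_j e_j \in \bigoplus_j \mathcal{A} e_j$, one has $m^\circ = \sum_j b_j^\circ e_j$ and $m^\circ + m = \sum_j (b_j^\circ + b_j) e_j = \sum_j b_j^\circ e_j = m^\circ$.

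The hard part will be the sub-case $b \in \tT$ with $b = (-)b$: $(-)$-bipotence does not constrain $b + b$ there, so forcing $b+b+b$ into $\mathcal{A}^\circ$ and then pinning it down as the specific quasi-zero $b^\circ$ requires carefully weaving together height~2, the submodule closure of $\mathcal{A}^\circ$, and the disjointness $\tT \cap \mathcal{A}^\circ = \emptyset$, with some bookkeeping to exclude degenerate identifications such as $b = b + b^\circ$.
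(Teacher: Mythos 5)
Your first case ($b\in\tT$ with $b\ne(-)b$) reproduces the paper's argument exactly: the chain $b^\circ+b=(b+b)+(-)b=b+(-)b=b^\circ$ via bipotence at $a=a'=b$ is the whole of the one-line proof given there, and the free-module claim is likewise dispatched componentwise. The two remaining cases you flag --- tangible $b$ with $b=(-)b$ (which covers every triple of the first kind, e.g.\ supertropical), and $b\in\mathcal A^\circ$ --- are not addressed in the paper's proof at all, so you have in effect noticed a genuine omission there.

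Your attempt at the self-negating tangible case does not close, and the gap is not one of bookkeeping. Putting $t=3b$, you rule out $t\in\tT$ only when $t\ne b$; the subcase $t=b$ (i.e.\ $3b=b$, equivalently $b+b^\circ=b$, which makes the lemma false outright) is left open, and it cannot be excluded from $(-)$-bipotence, height $2$, and the triple axioms as stated. For instance take $\mathcal A=\{\zero,b,b^\circ\}$, $\tT=\{b\}$, $(-)=\operatorname{id}$, with $b+b=b^\circ$, $b+b^\circ=b$, $b^\circ+b^\circ=b^\circ$: this is a commutative, associative $\tT$-module with $\tT\cap\mathcal A^\circ=\emptyset$, $\tTz$ generating, bipotence vacuous (since $|\tT|=1$), and $\mathcal A=\tT\cup\mathcal A^\circ$, yet $b^\circ+b=b\ne b^\circ$; note $\circ$-idempotence also holds here, so that is not the missing hypothesis either. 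Separately, your closing step for this case is based on the claim that $(-)b+b^\circ$ equals $b^\circ+b^\circ$; in fact $(-)b+b^\circ=b+b^\circ=3b=t$, so that comparison gives nothing. Finally, your $\mathcal A^\circ$ reduction has an algebraic slip: with $b=\sum_ic_i^\circ$ one gets $b^\circ+b=\sum_i\bigl((c_i^\circ)^\circ+c_i^\circ\bigr)$, not $\sum_i(c_i^\circ+c_i)$, so you are re-quoting the lemma for the quasi-zeros $c_i^\circ$ rather than for the tangibles $c_i$. The clean uniform reduction is to write any $b=\sum_i c_i$ with $c_i\in\tTz$ and observe $b^\circ+b=\sum_i(c_i^\circ+c_i)$; that legitimately collapses the whole statement to the case $b\in\tT$, where the split on $b=(-)b$ versus $b\ne(-)b$ is the residual (and, as above, genuinely problematic) difficulty.
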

\begin{proof} If $b \ne (-)b$ then $b^\circ +b = b(-)b+b = b (-)b
= b^\circ .$ The last assertion is by checking components.
 \end{proof}

\subsubsection{Triples of the  first and second kind}$ $

The triple is of the \textbf{first kind} if $(-)$ is the identity,
  of the \textbf{second kind} if $(-)$ is not the identity.

  When a
given $\tT$-module $\mathcal A$ does not come equipped with a
negation map, there are two main ways of providing one: Either take
$(-)$ to be the identity (first kind), as is done in supertropical
algebra  (and implicitly in much of the tropical literature), or we
``symmetrize''~$\mathcal A$ as in \S\ref{symmod}
below (second kind), as elaborated in \cite{AGR} and \cite{Ga}. 

\subsubsection{Signed decompositions}$ $

We will need a further refinement for triples of the   second kind,
taken from~\cite{AGR}.

\begin{defn}\label{signdec} A triple $(\mathcal A, \tT, (-))$ of the second kind
has a
 \textbf{signed decomposition}
  if  there is a submonoid $\tT^+$ of $\tT$ and a $\tT^+$-submodule $
\mathcal A ^+$ spanned by $\tTz^+ := \tT^+ \cup \{ \zero \}$,
together with injective homomorphisms $\mu _i : \mathcal A ^+ \to \mathcal A $
with $\mu_i = (-)\mu _j$, for $i, j \in \{0, 1\}$, $i\neq j$, and
$\tT^+$-module homomorphisms $\pi _i : \mathcal A \to \mathcal A ^+$ for $i =
0,1$ satisfying $\pi _0 \mu _0 = 1_ {\mathcal A ^+}$, $\pi _1 \mu _0
= \pi _0 \mu _1 = 0,$ and $\mu _0 \pi_0 (-) \mu_0 \pi _1 =
1_{\mathcal A}.$

 We identify  $\mathcal A ^+$ with $\mu _0 ( \mathcal A ^+)$ and define  $\mathcal A ^- := \mu _1 ( \mathcal A ^+)$,
both as $\tT^+$-submodules of $\mathcal A .$



%
\end{defn}

\begin{lem}\label{signd}
Let $(\mathcal A, \tT, (-))$ be a triple with a signed
decomposition.
\begin{enumerate}
 \eroman
 \item $\mathcal A ^+  + \mathcal A ^- =   \mathcal
 A.$
 \item Any element of  $\mathcal A $ is uniquely represented as $ b_0 (-) b_1$ for $b_0, b_1 \in  \mathcal A^+.$
In particular,  $  \mathcal
 A^\circ = \{ a(-)a: a \in \mathcal A ^+  \}.$
    \item
Defining $\tTm =   (-)\tTp$, we have $\tTp \cap \tTm  = \emptyset.$
 \end{enumerate}
\end{lem}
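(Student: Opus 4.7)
The plan is to exploit the identity $\mu_0\pi_0 (-) \mu_0\pi_1 = 1_{\mathcal A}$ together with $\mu_1 = (-)\mu_0$ as an explicit ``biprojection'' of $\mathcal A$ onto $\mathcal A^+$ and $\mathcal A^-$, so that the whole lemma reduces to reading off information from $\pi_0$ and $\pi_1$. For part (i), I apply the identity to an arbitrary $a \in \mathcal A$: the first summand $\mu_0\pi_0(a)$ sits in $\mu_0(\mathcal A^+)= \mathcal A^+$, while the second rewrites as $(-)\mu_0\pi_1(a) = \mu_1\pi_1(a) \in \mu_1(\mathcal A^+) = \mathcal A^-$ by $\mu_1 = (-)\mu_0$, placing $a$ in $\mathcal A^+ + \mathcal A^-$.

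For part (ii), existence follows from (i). For uniqueness, I first record how $\pi_0$ and $\pi_1$ act on each summand. The given axioms yield $\pi_0|_{\mathcal A^+} = \id$, $\pi_1|_{\mathcal A^+} = 0$, and $\pi_0|_{\mathcal A^-} = 0$ at once; the remaining relation $\pi_1\mu_1 = 1_{\mathcal A^+}$ follows by applying $\mu_0\pi_0 (-) \mu_0\pi_1 = 1_{\mathcal A}$ to $\mu_1(b')$ and invoking injectivity of $\mu_1$. These four restrictions let me recover $b_0 = \pi_0(a)$ and $b_1 = \pi_1(a)$ from any representation $a = b_0 (-) b_1$, forcing uniqueness. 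For the ``in particular'' claim, I take any $b \in \mathcal A$, write $b = b_0 (-) b_1$, and compute $b^\circ = (b_0 (-) b_1) + (b_1 (-) b_0) = (b_0 + b_1)(-)(b_0 + b_1) = (b_0 + b_1)^\circ$, noting $b_0 + b_1 \in \mathcal A^+$ since $\mathcal A^+$ is a submodule.

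For part (iii), suppose $t \in \tTp \cap \tTm$. Then $t \in \mathcal A^+$ and $t = (-)s$ for some $s \in \tTp \subseteq \mathcal A^+$, so $t$ admits the two representations $t = t (-) \zero = \zero (-) s$ in the form of (ii). Uniqueness forces $t = \zero$, but $\zero \in \mathcal A^\circ$ by \lemref{circget11}, contradicting the triple condition $\tT \cap \mathcal A^\circ = \emptyset$ in Definition~\ref{sursys0}. The main obstacle I expect is the bookkeeping around the identification $\mathcal A^+ \cong \mu_0(\mathcal A^+)$, in particular verifying $\pi_1\mu_1 = 1_{\mathcal A^+}$ cleanly from the decomposition identity rather than assuming it tacitly.
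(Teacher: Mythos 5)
Your proof is correct and follows essentially the same route as the paper: apply the decomposition identity $\mu_0\pi_0\,(-)\,\mu_0\pi_1 = 1_{\mathcal A}$ to get (i), recover the components via $\pi_0,\pi_1$ for uniqueness in (ii), and use that uniqueness plus $\zero\in\mathcal A^\circ$ against the triple axiom $\tT\cap\mathcal A^\circ=\emptyset$ for (iii). One small added value in your write-up: you explicitly derive the missing relation $\pi_1\mu_1 = 1_{\mathcal A^+}$ from the axioms (by applying the decomposition identity to $\mu_1(b')$ and using injectivity of $\mu_1$) — the paper uses this fact tacitly, and in fact its displayed formula $b_0=\pi_0\mu_0(b),\ b_1=\pi_1\mu_1(b)$ does not type-check as written ($\mu_i$ takes input from $\mathcal A^+$, not $\mathcal A$), so your cleaner statement $b_0=\pi_0(b)$, $b_1=\pi_1(b)$ (under the identification $\mathcal A^+\cong\mu_0(\mathcal A^+)$) is preferable.
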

\begin{proof} (i) Any $b \in \mathcal A$ satisfies  $b = \mu _0 \pi_0 (b)(-)
\mu_0 \pi _1 (b) \in \mathcal A ^+ + \mathcal A ^-.$

 (ii) From $(i)$, any element $b \in \mathcal A$ can be written as $b=b_0(-)b_1$ for $b_0,b_1 \in \mathcal A^+$. We have
 \[
 b_0 =  \pi _0 \mu _0(b_0 (-) b_1) = \pi _0 \mu _0 b, \quad b_1 = \pi _1 \mu _1(b_0 (-) b_1)= \pi _1 \mu _1 b,
 \]
showing that the decomposition is unique. The second assertion is clear.

 (iii) If $a \in \tTp \cap \tTm ,$ then $a (-) \zero = \zero (-) a,$ so $a = \zero.$
 \end{proof}

\subsection{Systems}$ $

 We round out the structure  with a \textbf{surpassing relation}
$\preceq$ ( \cite[Definition~1.31]{Row16} and also  described in
\cite[Definition~2.11]{JuR1}).

\begin{defn}\label{precedeq07}
A \textbf{surpassing relation} on a triple $(\mathcal A, \tT, (-))$,
denoted
  $\preceq $, is a partial preorder satisfying the following, for elements $a \in \tT$ and $b_i \in \mathcal A$:

  \begin{enumerate}
 \eroman
    \item  $\zero \preceq c^\circ$
 for any $c\in \mathcal A$.
\item  If $b_1 \preceq b_2$ then $(-)b_1 \preceq (-)b_2$.
  \item If $b_1 \preceq b_2$ and $b_1' \preceq b_2'$ for $i= 1,2$ then  $b_1 + b_1' \preceq b_2
   + b_2'.$
    \item   If  $a \in \tT$ and $b_1 \preceq b_2$ then $a b_1 \preceq ab_2.$
    \item   If  $a\preceq b $ for $a,b \in \tT,$ then $a =  b.$
   \end{enumerate}


\end{defn}

The justification for these definitions is given in
\cite[Remark~1.34]{Row16}.

\begin{rem}\label{indPO}  Any surpassing relation $\preceq$ induces a partial preorder $\le$  on $\mathcal A$ given by $ a_0 \le a_1$ iff $a_0^\circ \preceq a_1^\circ .$
 This restricts to a partial preorder on $\mathcal A^\circ$, which ties in with the modulus,  as  seen in
\cite[Example~2.16]{JMR}.  \end{rem}


%

\begin{defn}\label{Nu}
For a triple $\mathcal A$ with a surpassing relation $\preceq$, we
let
\[
\mathcal A_{\textrm{Null}}:= \{ b \in \mathcal A: b\succeq \zero
\}.\,\footnote{One could also require $b^\circ = b$. This has
categorical advantages seen in \S\ref{Grand} but has algebraic
drawbacks.}\]
\end{defn}

\begin{rem} $ {\mathcal A}_{\Null}$ is a $\tT$-submodule of ${\mathcal A}$ containing $\mathcal A
    ^\circ$.
\end{rem}

\begin{lem}\label{circget7}
Let $\mathcal A$ be a triple with a surpassing relation $\preceq$. Then we have $$b \preceq b  +c,\quad \forall b \in \mathcal A, ~\forall c \succeq \zero.$$
\end{lem}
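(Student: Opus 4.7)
The plan is straightforward: this follows directly from the reflexivity of $\preceq$ combined with axiom (iii) of Definition~\ref{precedeq07} (compatibility of $\preceq$ with addition).

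First I would observe that, since $\preceq$ is a partial preorder on $\mathcal{A}$, it is reflexive, so $b \preceq b$. By hypothesis, $c \succeq \zero$, i.e., $\zero \preceq c$. Now I would apply Definition~\ref{precedeq07}(iii) to the two inequalities $b \preceq b$ and $\zero \preceq c$ to conclude
\[
b + \zero \;\preceq\; b + c.
\]
Since $\zero$ is the additive identity of $\mathcal A$, the left-hand side equals $b$, yielding the desired relation $b \preceq b + c$.

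There is essentially no obstacle here: the lemma is just the observation that adding a ``non-negative'' element (in the sense of the surpassing relation) to $b$ cannot decrease $b$ in the preorder. The only subtlety worth flagging is that we are using reflexivity of $\preceq$ (which is part of the definition of partial preorder) together with the fact that $\zero$ is a two-sided additive identity on $\mathcal{A}$, so no structural results beyond Definition~\ref{precedeq07} are invoked.
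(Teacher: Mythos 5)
Your argument is exactly the paper's: the paper proves this by a one-line appeal to Definition~\ref{precedeq07}(iii), and your proof simply spells out the omitted steps (reflexivity $b \preceq b$, adding $\zero \preceq c$, and $b + \zero = b$). No discrepancy.
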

\begin{proof}
This directly follows from (iii) of Definition \ref{precedeq07}.
\end{proof}
%
%



%

\begin{prop} [{\cite[Lemma~2.11]{JMR}}]\label{nab} If $b_1\preceq b_2$, then $b_2 (-) b_1  \succeq \zero$ and  $b_1 (-) b_2 \succeq
\zero.$ In   particular, if   $b  \in {\mathcal
A}_{\operatorname{Null}}$, then  $ (-) b   \in {\mathcal
A}_{\operatorname{Null}}$.
\end{prop}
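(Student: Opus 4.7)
The plan is to derive both inequalities from the axioms of a surpassing relation by using a quasi-zero as a ``stepping stone'' and then invoking transitivity with axiom (i), which says $\zero \preceq c^\circ$ for every $c \in \mathcal{A}$.

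For the first inequality $b_2(-)b_1 \succeq \zero$: starting from the hypothesis $b_1 \preceq b_2$ together with the reflexive instance $(-)b_1 \preceq (-)b_1$, I would apply the additivity axiom (iii) of Definition~\ref{precedeq07} to conclude
\[
b_1^{\circ} \;=\; b_1 (-) b_1 \;\preceq\; b_2 (-) b_1.
\]
Since $\zero \preceq b_1^{\circ}$ by axiom (i), transitivity of $\preceq$ then yields $\zero \preceq b_2 (-) b_1$, which is the desired $b_2 (-) b_1 \succeq \zero$.

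For the second inequality $b_1 (-) b_2 \succeq \zero$, I would first pass the hypothesis $b_1 \preceq b_2$ through the negation axiom (ii) to obtain $(-)b_1 \preceq (-)b_2$. Combining this with the reflexive instance $b_1 \preceq b_1$ via axiom (iii) gives
\[
b_1^\circ \;=\; b_1 (-) b_1 \;\preceq\; b_1 (-) b_2,
\]
and once again axiom (i) together with transitivity finishes this case.

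For the ``in particular'' statement, suppose $b \in \mathcal{A}_{\operatorname{Null}}$, i.e.~$\zero \preceq b$. Applying the second part of what has just been proved with $b_1 = \zero$ and $b_2 = b$, and using $(-)\zero = \zero$ from Lemma~\ref{circget11}, I obtain $\zero (-) b = (-)b \succeq \zero$, so $(-)b \in \mathcal{A}_{\operatorname{Null}}$. No real obstacle arises here; the only subtle point is recognizing that axiom (i) is exactly what allows a quasi-zero to bridge from reflexivity to the genuine inequality against $\zero$.
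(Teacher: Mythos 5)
Your proof is correct. The paper does not supply its own argument for Proposition~\ref{nab}; it cites \cite[Lemma~2.11]{JMR}, so there is no in-paper proof to compare against. Your derivation—adding the reflexive instance $(-)b_1\preceq(-)b_1$ (resp.\ $b_1\preceq b_1$ together with $(-)b_1\preceq(-)b_2$ from axiom (ii)) to the hypothesis via the additivity axiom (iii), then bridging from $b_1^\circ$ to $\zero$ via axiom (i) and transitivity—is the standard and essentially the only elementary route from Definition~\ref{precedeq07}. One very minor remark: in the ``in particular'' step you do not actually need Lemma~\ref{circget11}; the identity $\zero(-)b = (-)b$ holds simply because $\zero$ is the additive identity, independently of $(-)\zero=\zero$.
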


Some main cases of triples and systems are  defined as follows:

\begin{example} [{\cite[Definition~2.17]{JuR1}, \cite[Definition~1.70]{Row16}}]\label{precmain0}$ $
\begin{enumerate}\eroman
\item
Given a triple $(\mathcal A, \tT ,
(-))$, define $a \preceq_\circ c$ if $a + b^\circ = c$ for some $b
\in \mathcal A.$ Here the surpassing relation $\preceq$ is
$\preceq_\circ$, and ${\mathcal A}_{\operatorname{Null}} = \mathcal
A^\circ$.
\item
The symmetrized triple (to be considered in \S\ref{symmod}, in
particular Definition~\ref{symsyst}) is a special case of
$\mathrm{(i)}$.
\item
More generally, given a surpassing relation $\preceq$, define its restriction $\preceq_\Null$ by $a \preceq_\Null c$ if $a + b  = c$ for some
$b \in \mathcal A_\Null.$
\item (Even more generally, taken from \cite{AGR}) Given a triple  $(\mathcal A, \tT, (-))$ and a
  $\tT$- submodule $\mathcal I \supseteq \mathcal A^{\circ}$ of $\mathcal A$ closed under $(-)$, we define the  $\mathcal I$-\textbf{relation}  $ \preceq_{\mathcal I}
  $ by
    $b _1  \preceq_{\mathcal I} b_2$ if  $b _2  = b_1 +c$ for some $c \in \mathcal
    I.$ This need not be a surpassing
    relation, but is relevant to \S\ref{Grand}.
\item
Take $\preceq$ to be set inclusion when $\mathcal{A}$ is obtained
from the power set of      a hyperfield,  see   \cite[\S 3.6,
Definition~4.23]{Row16}, 
\cite[\S 10]{JuR1}; we denote it here as $\preceqh$.  ${\mathcal
A}_{\Null}$ consists of those sets containing $\zero$, which is the
version usually considered in the hypergroup literature, for
instance, \cite{BB1} and \cite{GJL}.
\end{enumerate}\end{example}

%

 \begin{defn}\label{Tsyst} $ $
 \begin{enumerate} \eroman
 \item
    A \textbf{system} (resp.~\textbf{pseudo-system}) is a quadruple $(\mathcal A, \tT_{\mathcal A} ,
 (-), \preceq),$ where $\preceq$ is a surpassing relation  on the
 triple
 (resp.~ pseudo-triple) $(\mathcal A, \tT_{\mathcal A}  , (-))$,
 which
 is \textbf{uniquely negated} in the sense that
 for any $a \in \tT_\mathcal A$,  there is a unique element $b$ of
 $\tT_\mathcal A$ for which $\zero \preceq a+b$ (namely $b =
 (-)a$)\footnote{This slightly strengthens the version of ``uniquely
    negated,'' for triples, used in \cite{Row16}, which says that there
    is a unique element $b$ of $\tT$ for which $ a+b \in \mathcal
    A^\circ.$}.
\item
The system $(\mathcal A, \tT_{\mathcal A} , (-), \preceq)$ is called
a \textbf{semiring system} when $\mathcal A$ is a semiring.
 \end{enumerate}


\end{defn}

We want to view triples and their systems as the
  ground structure over which we build our representation theory. We call this a \textbf{ground system}.
 A
range of examples of ground systems is given in
\cite[Example~2.16]{JMR}, including ``supertropical
  mathematics.''

Here is a weakened version of $\preceq$, in view of
Proposition~\ref{nab}:

 \begin{defn}  For any $b,b' \in \mathcal
A$, We say that $b$ \textbf{balances} $
    b'$  written $b \nabla b'$, if $b(-)b' \in \mathcal
A_{\textrm{Null}}$.
\end{defn}

\begin{prop}\label{nablatrop} $\nabla$ is transitive for any $(-)$-bipotent triple of height 2.
\end{prop}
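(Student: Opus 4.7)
Proof plan. The idea is to add the two balance relations using additivity of the surpassing order, and then strip off the resulting quasi-zero term by exploiting the rigidity of height-$2$ $(-)$-bipotent triples.

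Step 1 (set-up). Unpack $b_1\nabla b_2$ and $b_2\nabla b_3$ as $b_1(-)b_2,\ b_2(-)b_3 \in \mathcal{A}_{\Null}$, i.e., $\zero\preceq b_1(-)b_2$ and $\zero\preceq b_2(-)b_3$. Axiom (iii) of the surpassing relation (additivity) yields
\[
\zero \;\preceq\; (b_1(-)b_2)+(b_2(-)b_3) \;=\; b_1(-)b_3 + b_2^\circ,
\]
so $b_1(-)b_3 + b_2^\circ \in \mathcal{A}_{\Null}$. Equivalently, once we can discard the extraneous $b_2^\circ$, we obtain $\zero\preceq b_1(-)b_3$, which is exactly $b_1\nabla b_3$.

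Step 2 (height-$2$ case split). By the height-$2$ hypothesis, $\mathcal{A} = \tT \cup \mathcal{A}^\circ$, so $b_1(-)b_3$ is either a quasi-zero or a tangible. If $b_1(-)b_3 \in \mathcal{A}^\circ$, then axiom (i) of $\preceq$ (namely $\zero\preceq c^\circ$ for every $c$) gives $\zero\preceq b_1(-)b_3$ for free, and we are done.

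Step 3 (tangible subcase). Suppose $b_1(-)b_3 \in \tT$. Write $b_2^\circ = b_2 + (-)b_2$ and regroup $b_1(-)b_3 + b_2 + (-)b_2$. Here I would apply $(-)$-bipotence pairwise to tangible summands that are not negatives of each other (forcing such a pair to collapse to one of the summands), and the height-$2$ lemma $b^\circ + b = b^\circ$ to collapse tangible-plus-quasi-zero combinations of a matching element. The bipotent analysis yields a dichotomy: either the sum reduces back to $b_1(-)b_3$ itself, in which case $\zero \preceq b_1(-)b_3$ is immediate from Step~1; or the sum reduces to an element of $\mathcal{A}^\circ$ dominating $b_1(-)b_3$ in the induced preorder $\le$ of Remark~\ref{indPO}, which again forces $\zero\preceq b_1(-)b_3$ through the surpassing axioms.

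The hard part is Step 3. Because $\preceq$ is not antisymmetric, we cannot subtract $b_2^\circ$ from the inequality directly; instead the argument has to rely on the structural rigidity of $(-)$-bipotent triples of height $2$, in which tangibles and quasi-zeros only interact in a few controlled ways (a quasi-zero is either absorbed by a strictly dominant tangible or drags the tangible into $\mathcal{A}^\circ$, pushing us back into the easy Case of Step~2). Making this trichotomy precise and verifying that it propagates the surpassing relation is the bulk of the proof.
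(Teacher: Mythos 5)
Steps 1 and 2 are fine; Step 3 is where the argument breaks, and it cannot be repaired as outlined. By $(-)$-bipotence, since $b_1(-)b_3\in\tT$ and $b_2^\circ\in\mathcal{A}^\circ$ are distinct elements with $(-)b_2^\circ=b_2^\circ$, indeed $b_1(-)b_3+b_2^\circ\in\{b_1(-)b_3,\,b_2^\circ\}$, and the first alternative works as you say. But in the second alternative $b_1(-)b_3+b_2^\circ=b_2^\circ$, so Step 1 yields only $\zero\preceq b_2^\circ$, which is vacuous and says nothing about $b_1(-)b_3$. Your claim that the induced preorder $\le$ of Remark~\ref{indPO} then forces $\zero\preceq b_1(-)b_3$ is not justified: $\le$ compares quasi-zeros, i.e.\ it relates $(b_1(-)b_3)^\circ$ to $(b_2^\circ)^\circ$, and there is no cancellation property of $\preceq$ that would let you strip the $\circ$ off a tangible element.

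That second alternative is not a removable corner case. In the supertropical semiring over $(\mathbb{R},\max,+)$ with $(-)=\mathrm{id}$ and $\preceq=\preceq_\circ$ (so $\mathcal{A}_{\Null}=\mathcal{A}^\circ$), take $b_1=1$, $b_2=5^\nu$, $b_3=3$. Then $b_1(-)b_2=1+5^\nu=5^\nu$ and $b_2(-)b_3=5^\nu+3=5^\nu$ lie in $\mathcal{A}^\circ$, so $b_1\nabla b_2$ and $b_2\nabla b_3$; but $b_1(-)b_3=1+3=3$ is tangible, so $b_1\nabla b_3$ fails. Here $b_2^\circ=5^\nu$ absorbs $b_1(-)b_3=3$, which is exactly your second alternative. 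For comparison, the paper's proof takes a different route, splitting directly on whether $b_0,b_1$ lie in $\tT$ or $\mathcal{A}^\circ$ and using that $\nabla$ restricted to $\tT$ is equality by unique negation, but its ``say $b_0$'' reduction needs $b_0\in\mathcal{A}^\circ$ to obtain $b_1\preceq b_0$, and so it silently skips the case where the middle element is the quasi-zero and both endpoints are tangible --- the same case on which your Step 3 founders.
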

\begin{proof}  We want to show that $b_0\nabla b_1 $ and $b_1\nabla b_2 $ imply $b_0\nabla b_2 $.
This is obvious if all $b_0, b_1$ are in $\tT$, since then $b_0 = b_1$. If $b_0 $ or $b_1$ is in $\mathcal A^\circ$, say $b_0$, then $b_1 \preceq b_0$. We can easily see that transitivity holds in this case. Similarly, the result follows if both  $b_0 $ and $b_1$ are in $\mathcal A^\circ$.

\end{proof}

%

\subsection{Systemic modules}$ $

We fix a ground triple $(\mathcal A, \tT , (-))$ or ground system
$(\mathcal A, \tT , (-),\preceq)$, according to context, which we
simply denote by $\mathcal{A}$ as long as there is no possible
confusion.

 \begin{defn} A  \textbf{module with a negation map} (or just called \textbf{module with negation}) over the triple $(\mathcal A, \tT , (-),\preceq)$ is an  $\mathcal A$-module with a negation map $(-)$
 satisfying
 \begin{enumerate}
\item[(a)]
 $((-)a)m = (-)(am) = a((-)m)$ for $a \in
\mathcal A, \ m \in \mathcal M.$
\item[(b)]
$ \mathcal M$ is \textbf{uniquely negated} in the sense that for any
$a \in \tT_\mathcal M$,  there is a unique element $b$ of
$\tT_\mathcal M$ for which $\zero \preceq a+b$ (namely $b = (-)a$).
\end{enumerate}

A \textbf{systemic module} $\mathcal M : = (\mathcal M,
\tT_{\mathcal M}, (-), \preceq)$ over  $(\mathcal A, \tT ,
(-),\preceq)$ is an $(\mathcal A, \tT , (-),\preceq)$-module
$\mathcal M$ together with:
\begin{enumerate}\eroman
\item
a subset $\tT_{\mathcal M}$ spanning $\mathcal M$, and satisfying $\tT
\tT_{\mathcal M}\subseteq \tT_{\mathcal M}$;
\item
a surpassing relation $\preceq_{\mathcal M}$ also  denoted   as
$\preceq$, satisfying
 $am \preceq  a'm' $ for $a \preceq  a' \in
\mathcal A, \ m  \preceq  m' \in \mathcal M.$
\end{enumerate}
\end{defn}

%

 \begin{defn}\label{signdec1} A module $\mathcal M$ with negation is $\circ$-\textbf{idempotent} if
 $(b^\circ)^\circ = b^\circ$ for each $b \in \mathcal M.$
\end{defn}

 \begin{lem} Any $(-)$-bipotent module of second kind is
 $\circ$-idempotent.
 \end{lem}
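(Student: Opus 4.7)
The aim is to prove $(b^\circ)^\circ = b^\circ$ for every $b \in \mathcal M$. The first step is a direct reduction: by definition $(b^\circ)^\circ = b^\circ + ((-)b^\circ)$, and the earlier remark showing that quasi-zeros are fixed under any negation map gives $(-)b^\circ = b^\circ$. Hence $(b^\circ)^\circ = b^\circ + b^\circ$, and the whole task reduces to proving $b^\circ + b^\circ = b^\circ$.

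I would handle the tangible case first. For $b \in \tT_{\mathcal M}$, expand
\[
b^\circ + b^\circ = b + ((-)b) + b + ((-)b)
\]
and regroup as $(b + b) + \bigl((-)b + (-)b\bigr)$ using commutativity and associativity. Since the module is of second kind, tangibles are not their own negatives, so the $(-)$-bipotent axiom applied to the pair $(b,b)$ yields $b + b = b$, and applied to $((-)b,(-)b)$ (noting $(-)(-)b = b \ne (-)b$) yields $(-)b + (-)b = (-)b$. Therefore $b^\circ + b^\circ = b + (-)b = b^\circ$. For a general $b \in \mathcal M$, write $b = \sum_i b_i$ with $b_i \in \tT_{\mathcal M}$ (using that $\tT_{\mathcal M}$ spans $\mathcal M$), so $b^\circ = \sum_i b_i^\circ$ and $b^\circ + b^\circ = \sum_i (b_i^\circ + b_i^\circ) = \sum_i b_i^\circ = b^\circ$ by the tangible case.

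A cleaner alternative, if one is willing to invoke the preceding lemma (which in height~2 gives $c^\circ + c = c^\circ$ for every $c$ in any free $\mathcal A$-module), is to apply it twice: first with $c = b$, to get $(b^\circ + b) + (-)b = b^\circ + (-)b$; and then with $c = (-)b$, noting that $((-)b)^\circ = (-)b + b = b^\circ$ since the negation map has order $\le 2$, so that $b^\circ + (-)b = ((-)b)^\circ + (-)b = ((-)b)^\circ = b^\circ$. Chaining these yields $b^\circ + b^\circ = b^\circ$, as required.

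The main delicate point, and the step I would check with the most care, is the implicit reading of ``second kind'' guaranteeing that no tangible $b$ satisfies $b = (-)b$. If such a fixed point of $(-)$ in $\tT_{\mathcal M}$ were permitted, the bipotence axiom would not apply to the pair $(b,b)$ and the tangible-case argument would break (one would then be forced into the height-$2$ route and the preceding lemma). Verifying that this exclusion follows from the standing hypotheses — e.g.\ from unique negation combined with $\tT \cap \mathcal M^\circ = \emptyset$ in the second-kind setting — is the only nontrivial obstacle; the remaining computation is a formal rearrangement.
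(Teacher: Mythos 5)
Your proof follows essentially the same computational route as the paper's: reduce $(b^\circ)^\circ$ to $b^\circ + b^\circ$ using the fact that quasi-zeros are fixed by $(-)$, regroup as $(b+b)(-)(b+b)$, and collapse $b+b$ to $b$ via the $(-)$-bipotent axiom. The paper's one-line proof,
\[
b^\circ + b^\circ = (b+b)(-)(b+b) = b(-)b = b^\circ,
\]
applies this directly to an arbitrary $b\in\mathcal M$ without your detour through tangibles and spanning; your version is a mild elaboration of the same idea, not a different argument.

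Where your write-up genuinely adds value is the final caveat. The bipotence axiom $a+a'\in\{a,a'\}$ only applies when $a'\ne(-)a$, so $b+b=b$ is only licensed when $b\ne(-)b$. ``Second kind'' by the paper's definition says only that $(-)$ is not the identity map on the whole module, not that it is fixed-point free on $\tT_{\mathcal M}$; and neither unique negation nor $\tT\cap\mathcal M^\circ=\emptyset$ obviously rules out a tangible $b$ with $b=(-)b$ (for such a $b$, unique negation is vacuously satisfied with the witness $b$ itself). The paper's proof silently uses the same $b+b=b$ step and thus carries the same unaddressed subtlety. Your instinct to isolate and test this hypothesis is correct, and you are also right that the ``cleaner alternative'' via the preceding height-2 lemma does not apply here without the height-2 assumption (which the present lemma does not impose), and that the preceding lemma's own proof handles only the $b\ne(-)b$ case anyway. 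So: same approach as the paper, slightly more carefully written, with a legitimate flag on a point the paper leaves implicit.
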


\begin{proof}  If $(-)$ is of second kind then $b^\circ + b^\circ =
 (b+b) (-) (b+b) =  b (-) b = b^\circ.$
\end{proof}

For  $(-)$  of first second kind, $b+b+b = b+b$ for supertropical
algebra, but various ``layered semialgebras''  of
\cite[Example~1.50]{Row16} are counterexamples. One can extract a
``largest''  $\circ$-idempotent submodule.

\begin{lem}  For any module $\mathcal M$ with negation, $\{ b \in  \mathcal M:  (b^\circ)^\circ = b^\circ\}$
is a
 $\circ$-idempotent submodule with negation.
 \end{lem}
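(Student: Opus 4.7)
The plan is to observe that, once one unpacks the definition of $b^\circ$, the condition $(b^\circ)^\circ = b^\circ$ becomes a very manageable additive-idempotence condition on quasi-zeros, and then to check the submodule axioms directly. Specifically, using that $(-)b^\circ = b^\circ$ (the remark following Lemma~\ref{circget11}), one has $(b^\circ)^\circ = b^\circ + (-)b^\circ = b^\circ + b^\circ$, so
\[
S := \{ b \in \mathcal M : (b^\circ)^\circ = b^\circ\} = \{ b \in \mathcal M : b^\circ + b^\circ = b^\circ\}.
\]
This reformulation is the key simplification and makes the closure checks essentially mechanical.

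Next I would verify the two ``transport'' identities that carry the $\circ$-operation through the module structure: $(b_1 + b_2)^\circ = b_1^\circ + b_2^\circ$ (immediate from commutativity and the definition of $(-)$ on sums) and $(ab)^\circ = ab^\circ$ (from the $\tT$-action axiom $a((-)b) = (-)(ab)$ together with distributivity). With these in hand, the three closure properties follow at once:
\begin{enumerate}
\item For $b_1, b_2 \in S$, $((b_1+b_2)^\circ)^\circ = (b_1^\circ + b_2^\circ) + (b_1^\circ + b_2^\circ) = (b_1^\circ + b_1^\circ) + (b_2^\circ + b_2^\circ) = b_1^\circ + b_2^\circ = (b_1+b_2)^\circ$.
\item For $b \in S$ and $a \in \mathcal A$, $((ab)^\circ)^\circ = (ab^\circ)^\circ = ab^\circ + ab^\circ = a(b^\circ + b^\circ) = ab^\circ = (ab)^\circ$.
\item For $b \in S$, $((-)b)^\circ = (-)b + b = b^\circ$, so $(((-)b)^\circ)^\circ = (b^\circ)^\circ = b^\circ = ((-)b)^\circ$.
\end{enumerate}
Finally $\zero \in S$ since $\zero^\circ = \zero$ by Lemma~\ref{circget11}, so $S$ is a nonempty $\tT$-submodule closed under $(-)$.

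It remains to note that $S$ inherits the restricted negation map from $\mathcal M$ (well-defined by step 3 above) and is $\circ$-idempotent by its very definition, yielding a submodule with negation in the sense of Definition~\ref{signdec1}. I do not foresee a real obstacle: the whole argument rests on the single identity $(-)b^\circ = b^\circ$ and the distributivity of $(-)$ over sums and scalar multiplication, so the only point to be careful about is making sure not to use additive idempotence of $\mathcal M$ itself anywhere (we only need the weaker idempotence of the $b^\circ$ for $b \in S$).
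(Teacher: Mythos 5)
Your proof is correct and follows essentially the same route as the paper: the computation rests on the identities $(b_1+b_2)^\circ = b_1^\circ + b_2^\circ$ and $(-)b^\circ = b^\circ$, and then closure of $S$ under the module operations follows mechanically. The paper's proof is terser (it does not spell out the reformulation $(b^\circ)^\circ = b^\circ + b^\circ$, nor does it explicitly verify closure under scalar multiplication or that $\zero \in S$), so if anything your version is the more complete one, but the underlying argument is the same.
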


\begin{proof}  If $ (b^\circ)^\circ = b^\circ$ and $ (c^\circ)^\circ =
c^\circ,$ then $ ((b+c)^\circ)^\circ =   (b^\circ)^\circ +
(c^\circ)^\circ =  b^\circ + c^\circ = (  b+ c)^\circ,$ and $((-b)^
\circ)^\circ =  (b^\circ)^\circ  = b^\circ = (-b)^ \circ.$
\end{proof}

 In \S\ref{Grand}
  $\circ$-idempotence will play a special role. Following
  \cite{JuR1},
we   consider the category of  systemic modules $(\mathcal M,
\tT_{\mathcal M} , (-), \preceq)$ over a semiring
 system  $(\mathcal A, \tT_{\mathcal A} , (-), \preceq).$

\begin{example}
 \label{reve}$ $
 \begin{enumerate} \eroman
 \item For any   module $\mathcal M $ with negation, $a \mathcal M $ is a
submodule of $\mathcal M $ with negation, for any $a \in \tT$.
\item  For a system $(\mathcal A, \tT_{\mathcal A} ,
 (-), \preceq),$ and an index set $I$, ($\mathcal A ^{(I)}, \cup _{i\in I} \tT e_i , (-), \preceq)$ is a
systemic module, where $(-)$ and $ \preceq$ are defined
componentwise, and the $e_i$ are the usual vectors with $\one$ in
the $i$ position, comprising a  \textbf{base} $\{ e_i : i \in I\}$
of $\mathcal A ^{(I)}$. It is uniquely negated, seen componentwise.

\item This is in the spirit of hyperfields. Suppose  $(\mathcal A, \tT ,
(-),\preceq)$ is a system. Then the operations extend elementwise to
the power set $\mathcal P (\mathcal A)$ and  $\mathcal P(\tT) $,
which can be viewed as a systemic module over $(\mathcal A, \tT ,
(-),\preceq)$. But now we  define $\preceq$ to be set inclusion.
\item More generally, for any  systemic module $\mathcal M : = (\mathcal M,
\tT_{\mathcal M}, (-), \preceq)$ we define  the \textbf{power set
(systemic) hypermodule} $(\mathcal P (\mathcal M),\tT_{\mathcal P
(\mathcal M)},(-),\subseteq)$, where
$\tT_{\mathcal P (\mathcal M)} = \mathcal P (\tT_{\mathcal M}),$
with elementwise operations, elementwise $(-)$, and with surpassing relation
$\subseteq$ on $\mathcal P (\mathcal M)$.
\end{enumerate}
\end{example}

\begin{defn}\label{modu218}
$\preceq$-\textbf{submodules} $\mathcal N$ of a systemic module
$\mathcal M $ are defined in the usual way, with the extra assumption
that if $b \in \mathcal N$ then $b+c \in \mathcal N$
for all $c \succeq \zero$ in
$\mathcal M.$ 
\end{defn}

In particular, any $\preceq$-submodule contains $\mathcal
M_{\operatorname{Null}}$ since $\zero \in \mathcal N$.

\begin{defn}
  An element $b\in \mathcal A$ is \textbf{null-regular over}  a $\preceq$-submodule $\mathcal N \subseteq
 \mathcal M$,   if  the following property:
 \[
 \textrm{ ``For each } y \in \mathcal M, \textrm{ there exists } z \in \mathcal{N} \textrm{ such that }\ by \succeq
 z,"
 \]
 implies $b \succeq \zero$.

 The element $b\in \mathcal A$ is \textbf{null-regular}   if $b$  is
 null-regular  for $\mathcal{N}=\zero$, i.e., if $b \mathcal M \succeq \zero$
 then $b\succeq \zero$.

 %
\end{defn}

%
%
%

\begin{example}\label{example: annihilator} \begin{enumerate}\eroman
 \item
Let $\mathcal A$ be a system and $\mathcal M$ be a systemic module.
For any subset $\mcS \subseteq \mathcal A,$ define the (module)
\textbf{annihilator}
\[
\Ann _{ \mathcal M}\, \mcS = \{ b \in   \mathcal M :  s b \in
\mathcal M_{\Null }, ~ \forall s \in \mcS\}.
\]
Then $\Ann _{ \mathcal M}\, \mcS$ is a $\preceq$-submodule of
$\mathcal M$. In fact, since $\Ann _{ \mathcal M}$ is clearly a
$\tT$-module, the only nontrivial part is to show that for $c \in
\Ann _{ \mathcal M}\, \mcS$, we have that $c+d \in \Ann _{ \mathcal
M}\, \mcS$ for any $ d \succeq \zero$. But, we have that $sc \in
\mathcal M_{\Null }$ for any $s \in \mcS$, and hence
\[
s(c+d) = sc +sd \succeq \zero +\zero=\zero,
\]
showing that $c+d \in \Ann _{ \mathcal M}\, \mcS$.

 \item In particular we have the submodule $ \Ann _{ \mathcal M}\, a = \{ b
\in   \mathcal M: ab \in \mathcal M_{\Null }\}$, for any $a \in
\tT.$

 \end{enumerate}
\end{example}

 \begin{defn}
For subsets $S_0 , S_1 \subseteq \mathcal M  $, we write  $S_0
\preceq S_1$ if for any $b_0  \in S_0$, there exists $b_1 \in S_1$
such
that $b_0 \preceq b_1$. 
 \end{defn}
%

An alternate definition to $\preceq$ on sets could be that for any
  $b_1  \in S_1$, there exists $b_0  \in S_0$ such that $b_0
\preceq b_1$. However, this would mean $S_0 \preceq \mathcal
M^\circ$ for any set $S_0 $ containing $\zero,$ which is too inclusive to be a useful criterion.



\begin{defn}\label{signedmod}
A systemic module $\mathcal M$ with a \textbf{signed decomposition},
is an $\mathcal A$-module over a system $(\mathcal A, \tT_{\mathcal
A} ,
 (-), \preceq)$ which itself has a signed
decomposition, defined to be a   union $\mathcal M = \mathcal M ^+
\cup \mathcal M ^- \cup \mathcal M ^\circ$ of pairwise disjoint
submodules, where $ \mathcal M ^+ = \mathcal A^+
  \mathcal M ^+ + \mathcal A^-   \mathcal M ^- $ and $\mathcal M
^- = \mathcal A^+ \mathcal M ^- + \mathcal A^- \mathcal M ^+.$
\end{defn}

We have the analog of Lemma~\ref{signd}:

\begin{lem}\label{signd1}
Let $(\mathcal M,\tT_{\mathcal M},(-),\preceq) $ be a  systemic
module with a signed decomposition.
\begin{enumerate}
 \eroman
 \item $\mathcal M ^+  $ and $ \mathcal M ^-$  are $ \mathcal A ^+$-submodules of $ \mathcal M$, with  $\mathcal M ^+  + \mathcal M ^- =   \mathcal M.$
 \item Any element of  $\mathcal M $ is uniquely represented as $ b_0 (-) b_1$ for $b_0, b_1 \in  \mathcal M^+.$
In particular,  $  \mathcal M^\circ = \{ b(-)b: b \in \mathcal M ^+
\}.$
 \item  $(\mathcal M^+, {\tT_{\mathcal M}^+}, (-),\preceq)$ is a  systemic
module over the system $(\mathcal A^+, \tT_{\mathcal A}^+ ,
 (-), \preceq)$.
\end{enumerate}
\end{lem}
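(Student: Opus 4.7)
The three claims parallel Lemma~\ref{signd} for the ring $\mathcal A$; my strategy is to lift each to the module $\mathcal M$ by exploiting the $\mathcal A$-action together with the ring-level projections $\mu_i,\pi_i$ furnished by Definition~\ref{signdec}, invoking the defining equations $\mathcal M^\pm = \mathcal A^+\mathcal M^\pm + \mathcal A^-\mathcal M^\mp$ and the ring identity $\one = \mu_0\pi_0(\one)\,(-)\,\mu_0\pi_1(\one)$.

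For part (i), the inclusion $\mathcal A^+\mathcal M^+ \subseteq \mathcal M^+$ is immediate from the defining equation, and $\mathcal M^+$ is closed under $+$ as a submodule of $\mathcal M$, so it is an $\mathcal A^+$-submodule; the argument for $\mathcal M^-$ is symmetric. For the sum $\mathcal M^+ + \mathcal M^- = \mathcal M$ I would decompose an arbitrary $b \in \mathcal M$ by case analysis on the partition $\mathcal M = \mathcal M^+ \cup \mathcal M^- \cup \mathcal M^\circ$. The $\mathcal M^\pm$ cases are trivial (pad with $\zero$). For $b \in \mathcal M^\circ$, I would use that $\mathcal M$ is spanned by $\tT_{\mathcal M}$ (whose elements, being tangible, avoid $\mathcal M^\circ$ and hence lie in $\mathcal M^+ \cup \mathcal M^-$) to write any $c$ with $b = c\,(-)\,c$ as $c = c^+ + c^-$ for $c^\pm \in \mathcal M^\pm$, and then rearrange $b = \bigl(c^+ \,(-)\, c^+\bigr) + \bigl(c^- \,(-)\, c^-\bigr) = \bigl[c^+ + ((-)c^-)\bigr] + \bigl[c^- + ((-)c^+)\bigr]$, observing via the defining equation for $\mathcal M^-$ that $(-)\mathcal M^+ \subseteq \mathcal A^-\mathcal M^+ \subseteq \mathcal M^-$ so that the first bracket lands in $\mathcal M^+$ and the second in $\mathcal M^-$.

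For part (ii), I would mimic Lemma~\ref{signd}(ii). Suppose $b_0 \,(-)\, b_1 = b_0' \,(-)\, b_1'$ with all four in $\mathcal M^+$; acting on both sides by $\mu_0\pi_0(\one)$ and exploiting $\pi_0\mu_0 = 1_{\mathcal A^+}$ and $\pi_0\mu_1 = 0$ on the ring side isolates the individual components, forcing $b_0 = b_0'$ and $b_1 = b_1'$. The second assertion $\mathcal M^\circ = \{b\,(-)\,b : b \in \mathcal M^+\}$ then drops out as a corollary: writing $c \in \mathcal M^\circ$ via (i) as $c = b_0\,(-)\,b_1$, the identity $(-)c = c$ combined with the uniqueness just proved forces $b_0 = b_1$.

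For (iii), the $\mathcal A^+$-module structure on $\mathcal M^+$ is provided by (i), the tangible subset $\tT_{\mathcal M}^+ = \tT_{\mathcal M} \cap \mathcal M^+$ spans $\mathcal M^+$ by restriction of the spanning hypothesis, and $\preceq$ restricts coherently to $\mathcal M^+$; taking the negation on $\mathcal A^+$ to be the identity (first kind), the remaining axioms of Definition~\ref{Tsyst} reduce to routine checks. The principal obstacle I anticipate is exactly the quasi-zero case of (i): a naive recursion $b = c\,(-)\,c$ with $c \in \mathcal M^\circ$ threatens to loop inside $\mathcal M^\circ$, which is why the argument must be routed either through the tangible spanning hypothesis (avoiding $\mathcal M^\circ$ at the base) or equivalently through the $\mathcal A$-action by $\mu_0\pi_i(\one)$, transporting the ring-level decomposition to $\mathcal M$ without relying on an auxiliary fixed point.
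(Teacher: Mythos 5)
Your argument for (i) is correct but takes an unnecessary detour: since $\tT_{\mathcal M}$ spans $\mathcal M$ and tangible elements avoid $\mathcal M^\circ$, every $b\in\mathcal M$ is already a sum of elements of $\mathcal M^+\cup\mathcal M^-$; grouping summands by sign yields $b\in\mathcal M^+ + \mathcal M^-$ directly, no case analysis on the partition required. This is also essentially all the paper checks — its proof is a one-line remark that $\mathcal M^+$ and $\mathcal M^-$ are $\mathcal A^+$-closed — so you are supplying detail it omits.

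The genuine gap is in (ii). You propose to ``act on both sides by $\mu_0\pi_0(\one)$,'' but under the identification of $\mathcal A^+$ with $\mu_0(\mathcal A^+)$ built into Definition~\ref{signdec}, $\one$ is identified with $\mu_0(\one)$, whence $\pi_0(\one)=\pi_0\mu_0(\one)=\one$ and $\mu_0\pi_0(\one)=\one$. Multiplying by $\one$ is a no-op and isolates nothing. The deeper issue is that the ring-level proof of Lemma~\ref{signd}(ii) rests on the projections $\pi_0,\pi_1:\mathcal A\to\mathcal A^+$, and Definition~\ref{signedmod} furnishes no module-level analogue $\mathcal M\to\mathcal M^+$: the module signed decomposition supplies only the defining equations $\mathcal M^\pm=\mathcal A^+\mathcal M^\pm+\mathcal A^-\mathcal M^\mp$ and pairwise disjointness, which by themselves do not force uniqueness of $b_0\,(-)\,b_1$ (absence of cancelation in semirings is exactly what makes this nontrivial). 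You would need to construct such projections — e.g.\ from the tangible-spanning decomposition, with a well-definedness check — or give a different uniqueness argument; the ring-level proof does not transfer as written. Finally, in (iii) be explicit that the negation on $\mathcal M^+$ must be the identity, since the ambient $(-)$ swaps $\mathcal M^+$ and $\mathcal M^-$ and so cannot simply restrict.
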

\begin{proof} We see by Definition~\ref{signedmod} that $\mathcal M ^+  $ and $ \mathcal M ^-$
are closed under multiplication by $ \mathcal A ^+$.
 \end{proof}

This is to be made relevant via symmetrization, in
Theorem~\ref{morphsym7}.

\subsection{$\preceq$-Morphisms}\label{morph}$ $

 A key philosophical point here is that $\preceq$, which plays such
 a prominent structural role, most also enter into the other aspects
 of the theory. But then we are confronted with the thorny question,
 when do we use
 $\preceq$ and when $\succeq$? Our pragmatic response is always to take
 that direction which provides the best theorem.

We work over a ground system $\mathcal A= (\mathcal A, \tT_{\mathcal
A}, (-), \preceq)$ and consider systemic modules $(\mathcal M,
\tT_{\mathcal M}, (-), \preceq)$.
%

\begin{defn}\label{ordmp}
Let $\mathcal A= (\mathcal A, \tT_{\mathcal A}, (-), \preceq)$ be a system.
\begin{enumerate}\eroman
\item  {\cite[Definition~2.37]{JuR1}} An \textbf{ordered map} of pre-ordered modules $$f: (\mathcal M,
\tT_{\mathcal M}, (-), \le)\to (\mathcal M', \tT_{\mathcal M'} ,
(-)', \le')$$ is a function $f: \mathcal M \to \mathcal M'$
satisfying the following
properties  for $a  \in \tT$ and   $b\le b'$, $b,b'$  in $\mathcal
M$:
\begin{enumerate}\eroman
\item
$f (\zero ) = \zero  .$
\item  $ f(a  b)=  a   f( b) $.
\item $ f((-)b)=   (-)
f(b)$ (automatic from (b) since we are assuming that $(-)\one \in
\tT);$
\item $ f(b) \le ' f(b').$
\end{enumerate}
\item  {\cite[Definition~2.37]{JuR1}} A \textbf{$\preceq$-morphism} of systemic modules $$f: (\mathcal M,
\tT_{\mathcal M}, (-), \preceq)\to (\mathcal M', \tT_{\mathcal M'} ,
(-)', \preceq')$$ is  an ordered map $f: \mathcal M \to \mathcal M'$
  (taking the preorder
to be $\preceq$) also satisfying $$ f(b + b') \preceq ' f(b) + f(
b'), \ \forall b,b' \in \mathcal M.$$

\item  {\cite[Definition~2.37]{JuR1}} A $\preceq$-morphism  $$f: (\mathcal M,
\tT_{\mathcal M}, (-), \preceq)\to (\mathcal M', \tT_{\mathcal M'} ,
(-)', \preceq')$$  of systemic modules is     \textbf{systemic} if
$f( \mathcal M )$ is a systemic submodule of $ \mathcal M'$
satisfying the property that $f^{-1}(\tT_ {\mathcal M'}) \cap \tT_
{\mathcal M}\ne \emptyset;$ $f$ is \textbf{tangible} if $f(\tT_
{\mathcal M}) \subseteq \tT_ {\mathcal M'}.$

\item   A \textbf{$\succeq$-morphism} of systemic $\mathcal A$-modules   is an ordered map $f: \mathcal M \to \mathcal M'$
also satisfying $$ f(b  + b') \succeq ' f(b) + f( b'), \ \forall
b,b' \in \mathcal M.$$
\end{enumerate}
\end{defn}

Note for ordered maps that if $b\succeq \zero$ then  $ f(b)\succeq
\zero$, by (a) and (d). Furthermore, ordered maps obviously satisfy
the convexity condition that if $b \le b' \le b''$ then $f(b) \le
f(b') \le f(b'').$

%

 By a \textbf{homomorphism} we mean that
equality holds instead of $\preceq'$ in (ii). (This would correspond
to a ``strict homomorphism'' of hyperrings in \cite[Definition
2.3]{Ju}.) A \textbf{homomorphism} of semiring systems is also
required to satisfy $ f(b b')= f(b ) f( b') .$

\begin{example}\label{triv1} \begin{enumerate}\eroman
 \item
Let $\mathcal{A}$ be a system and $\mathcal {M}$ be a systemic
module over $\mathcal A$. The \textbf{null homomorphism}  $
f_{\operatorname{Null}} :\mathcal M \to \mathcal M$ is given by
$f_{\operatorname{Null}}(b )= b^\circ.$

\item The \textbf{zero homomorphism}  $
\zero _{\mathcal M } :\mathcal M \to \mathcal M$ is given by
$f_{\operatorname{Null}}(b )= \zero, \ \forall b.$

\item  An example of a
$\preceq$-morphism which is not a homomorphism. Let $(\mathcal A,
\tT, (-)),\preceq) $ be a $(-)$-bipotent system of {height} 2, i.e.,
$\mathcal A =   \tT \cup  \mathcal A^ \circ$. Define $\varphi:
\mathcal A \to \mathcal A$ by  $\varphi(a) = a^\circ$ for $a \in\tT$
and $\varphi(b) =\zero $ for $b \in \mathcal A^ \circ .$ We claim
that $\varphi$ is a $\preceq$-morphism which is not a homomorphism.
Indeed, for $a' \ne (-) a$ in $\tT$, $\varphi (a+a') = (a+a')^\circ
= a^\circ +(a')^\circ = \varphi (a) +\varphi (a'),$ but  $\varphi
(a(-)a) = \zero \ne \varphi (a)^\circ = \varphi (a)+ \varphi
((-)a);$ for $a + b^ \circ = b^\circ,$ $\varphi(a + b^ \circ)= \zero
\preceq a^\circ = \varphi(a)$ whereas for $a + b^ \circ = a,$
$\varphi(a + b^ \circ)= a^\circ = \varphi(a)+\varphi( b^ \circ).$
\end{enumerate}  \end{example}

Example~\ref{morphmean} below gives instances where we would prefer
to use $\preceq$-morphisms rather than homomorphisms.
 On the other hand,
as with \cite{CC, Ju}, to facilitate results, we often make the
stronger assumption that $f$ is a homomorphism. Ironically, although
the zero homomorphism is null, we prefer to bypass it because it is
too special. For example, the zero homomorphism is not tangible. The
following observation helps.
%

\begin{rem}\label{2.30} In the case of  signed systemic modules, any
$\preceq$-morphism $$f: (\mathcal M, \tT_{\mathcal M}, (-),
\preceq)\to (\mathcal M', \tT_{\mathcal M'} , (-)', \preceq')$$ can
be decomposed as $f = (f_0,f_1),$  where $f(b)    = f_0(b)(-)f_1(b)$
with $f_i (b) \in {\mathcal M'}^+.$ In other words, $f|_{\mathcal
 M^+} = f_0.$ and  $f|_{\mathcal
 M^-} = f_1.$ We can continue this decomposition by defining $f_{0,i}(b) = \pi_0 (f_{i}(b))$
 and $f_{1,i}(b) = \pi_1 (f_{i}(b))$ for $i = 0,1$. This is all
well-defined, in view of Lemma~\ref{signd1}.

 Conversely, given $f_i: \mathcal M \to {\mathcal M'}^+ ,$ we define
 $f= (f_0,f_1): \mathcal M \to \mathcal M'$ by $f(b)  = f_0(b)(-)f_1(b)$,
 and we continue as before. Thus any $\preceq$-morphism $f$ corresponds to positive $\preceq$-morphisms $(f_{i,j}: 0 \le i,j \le 1)$.\end{rem}

%

\subsubsection{Images}$ $

\begin{defn}
Let $\mathcal M$ and $\mathcal M'$ be systemic modules over a system $\mathcal A$, and $$f:
(\mathcal M, \tT_{\mathcal M}, (-), \preceq)  \to (\mathcal M',
\tT_{\mathcal M'}, (-), \preceq)$$ be a $\preceq$-morphism of systemic modules.
\begin{enumerate}\eroman
 \item
  We define the \textbf{module image} $f(\mathcal M )$ of $f$ in the usual way as $\{
f(b):b\in  \mathcal M \}.$
\item
The $\preceq$-\textbf{module image} of $f$ is
$$f(\mathcal M )_\preceq :=  \{ b'\in  \mathcal M' : f(b)    \preceq  b', \quad \text{for some} \quad b\in  \mathcal M
\}.$$ (Thus  $f(\mathcal M ) \preceq f(\mathcal M )_\preceq.$)

The $\succeq$-\textbf{module image} of $f$ is
$$f(\mathcal M )_\succeq :=  \{ b'\in  \mathcal M' :    f(b)      \succeq  b', \quad \text{for some} \quad b\in  \mathcal M  \}.$$

%

\item
$f$ is \textbf{null}  if $f(\mathcal
M) \subseteq \mathcal M'_{\operatorname{Null}}$.

\item
$f$ is $\preceq $-\textbf{onto} if $f( {\mathcal M} )_\preceq \,
=\mathcal M'$   i.e.,  if for all $ b' \in \mathcal M'$ there is $b
\in \mathcal M$ for which  $ f(b)\preceq b'$.

\item
$f$ is $\succeq $-\textbf{onto} if $f( {\mathcal M} )_\succeq \,
=\mathcal M'$  i.e., if for all $ b' \in \mathcal M'$ there is $b
\in \mathcal M$ for which  $  f(b ) \succeq b'$.
 \end{enumerate}
 \end{defn}

The set
 $f(\mathcal M
)_\succeq $ is a systemic submodule of $\mathcal M'$  for any
$\succeq$-morphism $f$. In fact, in this case, $f(a)+f(b) \succeq
f(a+b)$. In particular, $f(\mathcal M )_\succeq $ is closed under
addition. One can easily check the remaining conditions.

%


\subsection{The role of universal algebra and model
theory}\label{unalg}$ $


Although we do not want to get bogged down in formalism, it is
appropriate to see how all of this ties in with model theory and the
venerable theory of universal algebra, invented for general
algebraic theories. Universal algebra is defined by a ``signature''
comprised of various sets $A_1, \dots, A_t,$ called ``carriers,''
operations $\omega _{j,m}: A_{i_1} \times \dots \times A_{i_m} \to
A_{i_{m+1}}$ on the the sets, and ``universal relations,'' also
called ``identities,'' which equate evaluations of operators, i.e.,
$$p(x_1,\dots, x_t) = q(x_1,\dots, x_t),$$ where $p$ and $q$ involve
composites of various operators $\omega _{j,m}$. In the theory of
triples, we take carriers $A_1 = \mathcal A,$ $A_2 = \tT,$
operations $\omega _{0,1} = \zero,$ $\omega _{0,2} = \one$, $\omega
_{1,1} = (-),$ $\omega _{2,1}: A_1 \otimes A_1 \to A_1$ to be
addition, $\omega _{2,2}: A_2 \otimes A_1 \to A_1$ to be the
$\tT$-action, and $\omega _{2,2}: A_1 \otimes A_1 \to A_1$ to be
multiplication when $\mathcal A$ is a semiring,  and in case when
$\tT \subseteq \mathcal A$, $\omega _{2,1}: A_2 \otimes A_2 \to A_1$
to be addition on $\tT$. The universal relations we have are
associativity, distributivity (when it is considered part of the
structure), and the properties of the negation map. For example
$(-)((-)b) = b$ can be written as $\omega _{1,1}(\omega _{1,1}(b)) =
b.$ We can also describe a  module over a triple as a  universal
algebra.

 As in
\cite[\S 2]{Row16}, universal algebra provides a guide for our
definitions, especially with regard to the roles of possible
multiplication on $\tT$ and the negation map. 

So far we are missing one critical ingredient, $\preceq$. For this
purpose we incorporate a surpassing relation into the signature, and
stipulate that if $x_i \preceq x_i'$ then $$\omega _{j,m}(x_1,
\dots, x_m) \preceq \omega _{j,m}(x_1', \dots, x_m').$$   A common
example of such a modification needed in universal algebra is the
theory of ordered groups. One must be careful. The ordered group
$(\Q, +)$ fits into this approach, but the ordered group $(\Q,
\cdot)$ does not, since $-2 < -1 $ and $-5 < 1$ but $10>1.$ 

\begin{note*}
 Surpassing relations, not being identities,   play a role in  modifying universal
algebra. Having $\preceq$ at our disposal, we want it to replace
equality in the basic definitions, especially since it is relevant
to recent work in hyperfields in \cite{CC,Henry,Ju,Ju1,JuR1}.

%
%
%

Let $\AA = (\mathcal A _1, \dots \mathcal A_t)$ and $\BA= (\mathcal
A' _1, \dots \mathcal A'_t)$  be carriers of a given signature  with
surpassing relations $\preceq$ and~$\preceq'$, respectively.

Let us  formalize   $\preceq$-morphisms described above, in terms of
universal algebra with a surpassing relation. A
$\preceq$-\textbf{morphism} $f: \AA \to \BA$ is a set of maps $f_j:
\mathcal A_j\to \mathcal A_j'$, $1 \le j \le m,$ satisfying the
properties:
\begin{enumerate}\eroman \item $f(\omega (b_{1 }, \dots, b_{m }))\preceq ' \omega
(f_1(b_{1 }), \dots, f_m(b_{m })),$  for every operator $\omega :
\mathcal A_{i_1} \times \dots \times  \mathcal A_{i_m} \to \mathcal
A_{i_{m+1}},$  with $ b_j \in \mathcal A_{i_j}.$

\item  For every operator $\omega : \mathcal
A_{i_1} \times \dots \times  \mathcal A_{i_m} \to \mathcal
A_{i_{m+1}}$, if $b_{j} \preceq c_{j}$ in $\mathcal A_{i_j}$ for
each $1\le j \le m,$ then
$$\omega (f_1(b_{1}), \dots,f_m(b_{m}))\preceq '
\omega (f_1(c_{1}), \dots, f_m(c_{m})).$$
\end{enumerate}

\end{note*}

\subsection{Symmetrization}\label{symmod}$ $

We introduce the symmetrization, which is a way to obtain a negation map, when there is not
a natural one. It is an important technique, introduced by Gaubert \cite{Ga} to study
vector spaces over semifields; Here we put it in the context of triples and systems.

A $\Z _2$-graded semigroup $\widetilde{\mathcal A}$ is also called a \textbf{super-semigroup} (not to be confused
with ``supertropical''), i.e., $\widetilde{\mathcal A} = \mathcal A_0 \oplus \mathcal A_1$
as semigroups. A  \textbf{super-semiring} $\mathcal A$  is a
super-semigroup $\widetilde{\mathcal A } = \mathcal A_0 \oplus
\mathcal A_1$ that is a semiring  satisfying $\mathcal A_0^2,
\mathcal A_1^2 \subseteq \mathcal A_0$ and $\mathcal A_0 \mathcal
A_1, \mathcal A_1 \mathcal A_0 \subseteq \mathcal A_1.$
A major example is a signed decomposition, where $\mathcal A_0 =
\widetilde{\mathcal A}^+$ and $\mathcal A_1 = (-)\widetilde{\mathcal
A }^+$. We view $ \widehat{\mathcal A}:=\mathcal A \oplus \mathcal
A$ as a $\tT$-module via the diagonal action, and as a
super-semigroup, where  $\mathcal A _0,$ $\mathcal A _1 $ each is a
copy of $\mathcal A $. 
Following \cite[\S2.2]{JuR1} we impose a canonical
$\widehat{\mathcal T}$-module structure on $\widehat{\mathcal A}$ in
the following way.

\begin{defn}\label{wideh7}
 For any $\tT$-module $\mathcal A$,
the \textbf{twist action} on $\widehat{\mathcal A}$ over
$\widetilde{ \tTz }:=  \tTz \oplus \tTz$
 is given by the super-action, namely
 \begin{equation}\label{twi} (a_0,a_1)\ctw (b_0,b_1) =
 (a_0b_0 + a_1 b_1, a_0 b_1 + a_1 b_0), \ a_i \in \tTz,\ b_i \in \mathcal A.  \end{equation}
The \textbf{switch map} $(-)_{\operatorname{sw}}$ on
$\widehat{\mathcal A}$ is given by
 $(-)_{\operatorname{sw}}(b_0,b_1)= (b_1,b_0).$
\end{defn}

\begin{example}\label{semidir38} The
\textbf{symmetrized $\tT$-module} $\widetilde{\mathcal A} $ of a
$\tT$-module $\mathcal A$ containing $\tT$  is defined as the
submodule of $\widehat{\mathcal A}  $ generated by
$\{(\zero,\zero)\}$ and $\tT_{\widetilde{\mathcal A}}  := (\tT
\oplus {\zero}) \cup (\zero \oplus {\tT }).$
\end{example}

\begin{rem}\label{exp} The switch map $(-)$ on $\widehat{\mathcal A}$ is a
negation map, and all quasi-zeros have the form $(b,b)$ since
$(b_0,b_1)(-)(b_0,b_1)= (b_0+b_1,b_0+b_1).$ Also notice that
$\mathcal T_{\widehat{\mathcal A}}\cap {\widehat{\mathcal A}}^\circ
= \emptyset.$  Hence, $(\widehat{\mathcal A}, \mathcal
T_{\widehat{\mathcal A}},(-))$ is a triple for any $\tT$-module
$\mathcal A$ generated by $\tT$. Thinking of $(b_0,b_1)$ as a replacement for
as $b_0 - b_1$, we see that $(b_1,b_0)$ corresponds to $b_1-b_0 =
-(b_0 - b_1).$ When $\mathcal A$ is additively idempotent, so is
$\widehat{\mathcal A}$.
\end{rem}

In  this situation, Example~\ref{precmain0}$~(\mathrm{i})$ becomes:

\begin{defn}\label{symrprec} $(b_0,b_1)\preceq_{\operatorname {sym}} (b_0',b_1') $ if
there is $c \in \mathcal A$ such that $b_i+c = b_i'$ for $i = 0,1$.
\end{defn}

$\preceq_{\operatorname {sym}}$, the main surpassing relation used
in this paper, is $\preceq_{\operatorname {\circ}} $ with respect to
the switch map (viewed as a negation map).

 \begin{defn}\label{symsyst}
 Let $\mathcal A$ be a system. The \textbf{symmetrized system} (of $\mathcal A$) is $(\widehat{\mathcal A}, \tT_ {\widehat{\mathcal A}}, (-),
\preceq_{\operatorname {sym}})$.
\end{defn}

 \begin{thm}\label{morphsym} $ $

 \begin{enumerate}\eroman
 \item For any $\tT$-module $\mathcal A$, we can embed  $\mathcal
 A$ into $\widehat {\mathcal A}$ via $b \mapsto (b,\zero)$, thereby
 obtaining a faithful functor from the category of semirings into  the category of
 semirings with a negation map (and preserving additive
 idempotence).  $(\widehat{\mathcal A}, \tT_ {\widehat{\mathcal A}}, (-),
\preceq_{\operatorname {sym}})$ is a system when  $\tT$ generates
$\mathcal A$.

 \item The  symmetrized system $(\widehat{\mathcal A}, \tT_ {\widehat{\mathcal A}}, (-),
\preceq_{\operatorname {sym}})$ has a signed decomposition with
$\widehat{\mathcal A}^+ = \mathcal A \oplus \{ \zero \},$ the image
of $\mathcal A$ in (i), and $\widehat{\mathcal A}^- = \{ \zero \}
\oplus \mathcal A.$ Thus the functor in (i) goes to the category of
triples with a signed decomposition, and the projection to the
positive part is a retract.

 \item Any $\mathcal A$-module $\mathcal M$ yields a $\widehat{\mathcal A}$-module $\widehat{\mathcal M}= \mathcal M \oplus \mathcal M,$
 which has a signed
 decomposition   where $\mathcal M^+$ is the
 first component.

  \item For any  $b_0, b_1 \in \mathcal M$ and $S \subseteq
\mathcal A,$ and $b = (b_0, b_1),$ the set
 $$[b:S]  := \{ c \in \widehat{\mathcal M}: S c \succeq   b_0 (-) b_1\}$$
is a $\preceq$-submodule of $\widehat{\mathcal M}$.
\end{enumerate}
\end{thm}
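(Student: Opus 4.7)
My plan is to prove the four parts in order, with each building on the previous.

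For (i), the map $\iota \colon \mathcal A \to \widehat{\mathcal A}$, $b \mapsto (b,\zero)$, is visibly injective, and the twist formula \eqref{twi} restricts to $(a,\zero)\ctw(b,\zero)=(ab,\zero)$, so $\iota$ is a semiring embedding; additive idempotence transfers componentwise. Functoriality: any $f\colon\mathcal A\to\mathcal B$ lifts to $\widehat f\colon\widehat{\mathcal A}\to\widehat{\mathcal B}$ via $(a_0,a_1)\mapsto(f(a_0),f(a_1))$, and one checks directly that $\widehat f$ commutes with $\ctw$ and with the switch $(-)_{\operatorname{sw}}$; faithfulness follows from $\widehat f\circ\iota=\iota\circ f$. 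By Remark~\ref{exp}, $(\widehat{\mathcal A},\tT_{\widehat{\mathcal A}},(-))$ is already a triple when $\tT$ generates $\mathcal A$; to promote it to a system, I verify unique negation for $\preceq_{\operatorname{sym}}$: given tangible $(a,\zero)\in\tT_{\widehat{\mathcal A}}$, I look for $(c_0,c_1)\in\tT_{\widehat{\mathcal A}}$ with $\zero\preceq_{\operatorname{sym}}(a+c_0,c_1)$, i.e., $a+c_0=c_1$. Since $(c_0,c_1)$ lies in either $\tT\oplus\{\zero\}$ or $\{\zero\}\oplus\tT$, the first case forces $a=\zero$ (contradicting tangibility via $\mathcal A^\circ\cap\tT=\emptyset$), while the second forces $c_1=a$, yielding the unique quasi-negative $(\zero,a)=(-)(a,\zero)$.

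For (ii), I take $\mu_0\colon\widehat{\mathcal A}^+=\mathcal A\oplus\{\zero\}\hookrightarrow\widehat{\mathcal A}$ as inclusion, $\mu_1=(-)\mu_0$ (so $\mu_1(a,\zero)=(\zero,a)$), and $\pi_i\colon\widehat{\mathcal A}\to\widehat{\mathcal A}^+$ the $i$-th projection. The axioms of Definition~\ref{signdec} are mechanical: $\pi_0\mu_0=\id$, $\pi_1\mu_0=0=\pi_0\mu_1$ are immediate, and
\[
\mu_0\pi_0(b_0,b_1)(-)\mu_0\pi_1(b_0,b_1)=(b_0,\zero)+(\zero,b_1)=(b_0,b_1),
\]
giving the last identity. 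The retraction assertion follows from $\pi_0\circ\iota=\id_{\mathcal A}$.

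Part (iii) is parallel: declare $\widehat{\mathcal M}=\mathcal M\oplus\mathcal M$ with the diagonal extension of the twist action, $(a_0,a_1)\ctw(m_0,m_1)=(a_0m_0+a_1m_1,\,a_0m_1+a_1m_0)$, and check associativity and distributivity directly from \eqref{twi}. The switch on $\widehat{\mathcal M}$ is a negation map compatible with the $\widehat{\mathcal A}$-twist action, and the signed decomposition is constructed exactly as in (ii), now with $\widehat{\mathcal M}^\pm$ the two coordinate copies of $\mathcal M$.

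Part (iv) requires checking the three conditions for a $\preceq$-submodule from Definition~\ref{modu218}. The key step is closure under $d\succeq\zero$: if $c\in[b:S]$, pick $s\in S$ with $sc\succeq b_0(-)b_1$; then $sd\succeq s\cdot\zero=\zero$ by Definition~\ref{precedeq07}(iv), and $s(c+d)=sc+sd\succeq b_0(-)b_1+\zero=b_0(-)b_1$ by Definition~\ref{precedeq07}(iii), so $c+d\in[b:S]$. Closure under $\tT_{\widehat{\mathcal A}}$-action follows similarly by applying scalars through the twist action and invoking Definition~\ref{precedeq07}(iv). The main obstacle is closure under addition, where for $c,c'\in[b:S]$ witnessed by $s,s'\in S$ respectively, one combines $sc\succeq b_0(-)b_1$ with the axioms of $\preceq$ and the structure of $\widehat{\mathcal M}$ to produce a witness for $c+c'$; this is the only nontrivial verification and is where the systemic framework of $\preceq_{\operatorname{sym}}$ really does the work, since the quasi-zero absorption $(b_0(-)b_1)+(b_0(-)b_1)^\circ$ behavior controls the direction of the surpassing inequality.
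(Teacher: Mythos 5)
Your treatment of (i)--(iii) expands the paper's terse sketch in essentially the intended way: the embedding $b\mapsto(b,\zero)$, the lifted morphism $(f,\zero)$, and the coordinate-wise signed decomposition are all as the paper intends, and the module-level version in (iii) is the obvious parallel construction. One quibble with your unique-negation argument in (i): from $a+c_0=\zero$ with $a,c_0\in\tT$ you conclude $a=\zero$ ``via $\mathcal A^\circ\cap\tT=\emptyset$,'' but that condition says nothing about additive inverses in $\mathcal A$ itself; what you actually need is that $\mathcal A$ lack zero sums (which does hold in the additively idempotent case the theorem highlights, but is not a consequence of $\tT$ generating $\mathcal A$). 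You should name the hypothesis you are using.

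Part (iv) has a genuine gap. You dispatch closure under elements $d\succeq\zero$, but the other two verifications are not carried out, and they are not routine. For closure under the $\mathcal A$-action, you invoke Definition~\ref{precedeq07}(iv), which gives $a(sc)\succeq a\bigl(b_0(-)b_1\bigr)$; what you need is $s(ac)\succeq b_0(-)b_1$, and these are not the same inequality --- multiplying by $a$ rescales the right-hand side as well. For closure under addition, you correctly identify it as the nontrivial step, but then assert that ``quasi-zero absorption'' handles it without producing a witness. Concretely, if $sc\succeq b_0(-)b_1$ and $s'c'\succeq b_0(-)b_1$ with $s\ne s'$, there is no evident $s''\in S$ for which $s''(c+c')\succeq b_0(-)b_1$; and even with a common witness $s=s'$ you would need $\bigl(b_0(-)b_1\bigr)+\bigl(b_0(-)b_1\bigr)\succeq b_0(-)b_1$, which under $\preceq_{\operatorname{sym}}$ amounts to finding $c$ with $b_i+c=b_i+b_i$ for $i=0,1$, true when $\mathcal M$ is additively idempotent but false in general. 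So either you must restrict to the additively idempotent case and interpret $Sc\succeq b_0(-)b_1$ as a universal (not existential) condition over $s\in S$, or you need a different argument. As it stands the ``only nontrivial verification'' is exactly the one left undone.
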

\begin{proof}
(i) The twist multiplication matches the usual multiplication on the
first  component (where the multiplicative unit is $(1,\zero))$, and
then any $\preceq$-morphism $f$ is sent to the $\preceq$-morphism
$(f,\zero))$   on $\widehat{\mathcal A}$.

The proofs of (ii), (iii), and (iv) are straightforward.
\end{proof}

Remark~\ref{2.30} also applies in the symmetrized case $\widehat{\mathcal M}$ of an
arbitrary module $\mathcal M$, and the positive part of $\widehat{\mathcal M}$, which is the original module $\mathcal M.$

The one downside here is that the symmetrized triple of $\mathcal A
$ is not $(-)$-bipotent. The following modification, which is
$(-)$-bipotent and thus more amenable to the systemic theory and
tropical mathematics, was introduced in Gaubert's dissertation
\cite{Ga} and \cite[Proposition~5.1]{AGG1}, and explored further
under the name of ``symmetrized max-plus semiring'' in
\cite[Proposition-Definition 2.12]{AGG1}; also see
\cite[Example~1.42]{AGR}.

\begin{example}\label{surpsym} One starts with an ordered semigroup $\tT$ and a $\tT$-module $\mathcal A := \tTz =  \tT
\cup \{ \zero\}, $ and defines
$$\widetilde{\tT} := (\tT \oplus \{ \zero\}) \cup ( \{
\zero\} \oplus \tT) ,\quad \tT_{\operatorname{sym}}:=
\widetilde{\tT} \cup D \ \textrm{where } D=\{ (a,a): a\in \tT_\zero
\} \subset \tTz \oplus \tTz.$$ Thus, viewing $\tT_\zero$ as a
bipotent semiring, addition on $\tT_{\operatorname{sym}}$ also is
according to components on $\tT_\zero \oplus \{ \zero\}$, $\{
\zero\} \oplus \tT_\zero$, and $ \{ (a,a): a\in \tT_\zero \}$,
whereas ``mixed'' addition satisfies:
 $$(a_0,\zero)   + (\zero, a_1) = \begin{cases} (a_0,\zero)  \text{ if } a_0 > a_1;
 \\ (\zero, a_1) \text{ if } a_0 < a_1;  \\  (a_1, a_1)
 \text{ if }   a_0 =  a_1 ; \end{cases}$$
 $$(a_0,\zero)   + (a_1, a_1) = \begin{cases} (a_0,\zero)  \text{ if } a_0 > a_1;
 \\ (a_1, a_1) \text{ if } a_0 \le a_1;  \end{cases}$$
$$(\zero, a_0)   + (a_1, a_1) = \begin{cases} (\zero, a_0)    \text{ if } a_0 > a_1;
 \\ (a_1, a_1) \text{ if } a_0 \le a_1.  \end{cases}$$

 Multiplication in $\tT_{\operatorname{sym}}$ is the twist action as
in Definition~\ref{wideh7}, but taken  with respect to this
addition. Then $(\tT_{\operatorname{sym}},\widetilde{\tT}  ,(-))$ is
a $(-)$-bipotent triple. 

$\widehat{\tT}  $ is ordered, when we take the elements of the first
component to be positive, and thus greater than the elements of the
second component. (But this is only preserved under multiplication
by positive elements.)
\end{example}

 \begin{thm}\label{morphsym7} The assertion of Theorem~\ref{morphsym} also holds for Example~\ref{surpsym},
 providing a faithful functor from ordered semigroups to  $(-)$-bipotent triples with signed decompositions.
\end{thm}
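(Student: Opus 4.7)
The plan is to verify, step by step, that each assertion of Theorem~\ref{morphsym} goes through in the concrete setting of Example~\ref{surpsym}, and that the extra hypothesis of an ordered semigroup $\tT$ upgrades the codomain of the functor to the category of $(-)$-bipotent triples with signed decomposition. The strategy is identical in outline to that of Theorem~\ref{morphsym}; only the verifications change, being governed by the explicit piecewise addition rules of Example~\ref{surpsym}.

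First I would handle the analog of (i). The map $a \mapsto (a,\zero)$ identifies $\tTz$ with $\tTz \oplus \{\zero\}$, and on this subset the modified addition reduces to $(a_0,\zero)+(a_1,\zero) = (\max(a_0,a_1),\zero)$, which recovers the bipotent addition on $\tTz$; the twist multiplication similarly restricts to the original product on the first coordinate. This makes the embedding a faithful homomorphism. Functoriality amounts to the observation that an order-preserving homomorphism $\varphi\colon \tT \to \tT'$ induces $\widehat{\varphi}\colon \tT_{\operatorname{sym}} \to \tT'_{\operatorname{sym}}$ by $(a_0,a_1)\mapsto(\varphi(a_0),\varphi(a_1))$; inspection of the three mixed-addition cases shows that each case depends only on the comparison of $a_0$ and $a_1$, which $\varphi$ preserves, so $\widehat{\varphi}$ respects addition, and it obviously commutes with the switch map and twist product.

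Next, for (ii), set $\widehat{\mathcal A}^+ = \tTz \oplus \{\zero\}$ and $\widehat{\mathcal A}^- = \{\zero\} \oplus \tTz$, with $\mu_0,\mu_1$ the canonical inclusions and $\pi_0,\pi_1$ the coordinate projections. The identities $\pi_0\mu_0 = 1$, $\pi_1\mu_0 = \pi_0\mu_1 = 0$ are immediate, and $\mu_0\pi_0 (-) \mu_0\pi_1 = 1_{\tT_{\operatorname{sym}}}$ reduces to $(a_0,\zero)(-)(a_1,\zero) = (a_0,a_1)$, which is one of the mixed-addition clauses. Parts (iii) and (iv) use only the formal module structure of $\widehat{\mathcal A}$, so the proofs from Theorem~\ref{morphsym} carry over verbatim. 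The $(-)$-bipotence of $(\tT_{\operatorname{sym}},\widetilde{\tT},(-))$ is read off directly from Example~\ref{surpsym}: for $a,a'\in\widetilde{\tT}$ with $a' \ne (-)a$, the mixed rules give $a+a'\in\{a,a'\}\subseteq\widetilde{\tT}$, while the only way to produce a nontrivial diagonal element $(c,c)$ is exactly the excluded case $a'=(-)a$, which also furnishes uniqueness of negation.

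The most delicate step, and the one I would allot the most care to, is verifying that the piecewise addition on $\tT_{\operatorname{sym}}$ is associative and that twist multiplication distributes over it. This is a routine but lengthy case analysis on the relative order of coordinates in $\tT$, with no substantive algebraic surprises, and once it is in place the remaining triple axioms follow without further work. The surpassing relation is inherited as $\preceq_{\operatorname{sym}}$ from Definition~\ref{symrprec} with respect to the switch map, completing the system structure and hence the functor into $(-)$-bipotent triples with signed decomposition.
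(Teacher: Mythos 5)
Your plan matches the paper's own proof, which is a single sentence asserting the verifications of Theorem~\ref{morphsym} carry over ``with the obvious adjustment for addition.'' But one verification does \emph{not} carry over, and your argument does not address it. Definition~\ref{signdec} requires the projections $\pi_0,\pi_1:\mathcal A\to\mathcal A^+$ to be $\tT^+$-module \emph{homomorphisms}, hence additive. That holds trivially for the componentwise addition on $\widehat{\mathcal A}$, but fails for the mixed addition of Example~\ref{surpsym}: with $\pi_0(b_0,b_1)=(b_0,\zero)$, take $a<b$ in $\tT$; then $\pi_0\bigl((a,\zero)+(\zero,b)\bigr)=\pi_0(\zero,b)=(\zero,\zero)$ while $\pi_0(a,\zero)+\pi_0(\zero,b)=(a,\zero)$. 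Equivalently, the uniqueness of the representation $b_0(-)b_1$ that Lemma~\ref{signd}(ii) derives from Definition~\ref{signdec} fails outright in $\tT_{\operatorname{sym}}$, since $(a,\zero)=(a,\zero)(-)(b,\zero)$ for every $b<a$ in $\tT$; no other choice of $\pi_i$ can repair this.

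Your statement that ``the identities $\pi_0\mu_0=1$, $\pi_1\mu_0=\pi_0\mu_1=0$ are immediate'' only checks the pointwise equalities, not the homomorphism axiom, and your claim that $\mu_0\pi_0(-)\mu_0\pi_1=1$ ``reduces to $(a_0,\zero)(-)(a_1,\zero)=(a_0,a_1)$, which is one of the mixed-addition clauses'' is false for generic $a_0,a_1\in\tT$ (for $a_0>a_1$ the left side equals $(a_0,\zero)$, not $(a_0,a_1)$); it only holds because $\tT_{\operatorname{sym}}$ is the proper subset $(\tT\oplus\{\zero\})\cup(\{\zero\}\oplus\tT)\cup D$ of $\tTz\oplus\tTz$. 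To make the ``signed decomposition'' claim literally true for $\tT_{\operatorname{sym}}$ one has to either weaken Definition~\ref{signdec} (drop additivity of $\pi_i$, or allow them to be $\preceq$-morphisms) or read ``signed decomposition'' in the disjoint-union sense $\mathcal A^+\cup\mathcal A^-\cup\mathcal A^\circ$ of Definition~\ref{signedmod}; either way this is a genuine adjustment that your proof should make explicit rather than a verbatim carry-over.
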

\begin{proof}
The same verifications as in Theorem~\ref{morphsym}, with the
obvious adjustment for addition.
\end{proof}
\subsubsection{Application to \cite{CC2}}$ $

 Symmetrization   provides the link to \cite{CC2} in
obtaining homology for modules over semirings.

\begin{rem}\label{morphCC}
Connes and Consani \cite{CC2} work with the category  \BMod\ of
modules over the Boolean semifield $\mathbb B = \{ \zero, \one \}$
(where $ \one +   \one =  \one$).
 Its symmetrization then is $\widehat {\mathbb
B}_{(-)}= \{ (\zero,\zero), (\one, \zero), (\zero, \one), (\one,
\one)\},$ where $(-) (\one, \zero) = (\zero, \one),$ and the
quasi-zeros are the diagonal elements. Symmetrization decomposes
into the first and second components as modules over the Boolean
semiring.

There is a homomorphism from $\widehat{\mathbb B}_{(-)}$ onto the
supertropical extension $\mathcal A = \mathbb B \cup \mathbb B^\nu$
of~$\mathbb B$, where $(a,\zero), (\zero,a) \mapsto a$ and $(a,a)$
is fixed. The twist action (Definition~\ref{wideh7}) on
$\widehat{\mathcal A}$ over $\widehat {\tT} $ is studied in \cite[\S
4]{CC2} over $\widehat{\mathbb B}_{(-)}$ under the name of the
Kleisli category. Thus \cite{CC2}, which studies modules over
$\widehat{\mathbb B}_{(-)}$, can be viewed in this context. The
approach in \cite{CC2} is more categorical, and the reader is
welcome to interpret the results of~\cite[\S~3]{CC2} concerning
coequalizers and comonads for systemic modules. Our negation map is
similar to the Eilenberg-Moore notion of involution treated in
\cite[\S~5]{CC2}. \end{rem}

\subsection{Major examples}$ $

Let us summarize the discussion of triples and systems via examples.
More details are provided in \cite{AGR2}.

\begin{example}\label{precmain}$ $ $ $ \begin{enumerate}\eroman\item
Given a triple $(\mathcal A, \tT  , (-))$, take the surpassing
relation $\preceq$ to be $\preceq_\circ$ of Example~\ref{precmain0}.

\item  The  $\mathcal  I$-relation of Example~\ref{precmain0}(iv) provides a rather wide ranging
generalization of (i); sometimes it is better to define $b \preceq
b'$ when $b' = b^\circ$ or $b' = b+c$ for some $c \in \mathcal I,$
in order to take quasi-zeros into account.

\item The set-up
of supertropical mathematics \cite{zur05TropicalAlgebra,IR} is a
special case of (i), where $\mathcal A = \tTz \cup \tG$ is the
supertropical semiring, $(-)$ is the identity, $\circ$ is the
``ghost map,''  $ \tG =\mathcal A^\circ  $, and $\preceq$ is ``ghost
surpasses.'' Another way of saying this is that $a + a' \in
\{a,a'\}$ for $a \ne a' \in \tT$, and $a + a = a ^\circ.$ Tropical
mathematics is encoded in $\tG$, which (excluding $\zero$) often is
an ordered group, and can be viewed for example as the target of the
Puiseux valuation on the field of Puiseux series (tropicalization).

\item The
symmetrized triple can be made into  a system viewed as special case
of (i), which includes idempotent mathematics, as  explained in
 Theorem~\ref{morphsym}.

\item For a hypergroup $\tT$,
${\mathcal A}$ is the subset of the power set $\mathcal P(\tT)$
generated by $\tT$, where $(-)$ is induced from hypergroup negation,
and $\preceq$ is set inclusion. We call this a \textbf{hypersystem}.
${\mathcal A}_{\operatorname{Null}}$ consists of those sets
containing $\zero$, which  in the hypergroup literature is the set
of hyperzeros. The ``tropical'' hypergroup and sign hypergroup
actually are isomorphic to supertropical algebras, so (ii) is
applicable in these cases.

\item The
fuzzy ring of \cite{Dr} is a special case of (i), and provides
valuated matroids as explained in \cite{BB1} and
\cite[\S1.3.6]{AGR}.

\item Tracts, introduced recently in \cite{BB1}, are mostly special cases of systems, where $\tT$ is the given Abelian group $ G$, $\mathcal A
= \mathbb N[G],$ $\vep = (-) \one,$ and $N_G $ is ${\mathcal
A}_{\operatorname{Null}},$ usually taken to be $\mathcal A^\circ$.

\item (The connection to idempotent algebra) In any semiring, one has
the Green  relation given by $a \le b$ iff $a+b = b$,
\cite[Example~2.60(i)]{Row16}. Conversely, any ordered monoid with
$\zero$ gives rise to an idempotent (in fact, bipotent) semiring by
putting $a+b = b$ whenever $a\le b.$

 The only natural
negation map would be the identity, and one gets a pseudo-triple by
taking~$\tT$ to be a generating set of $\mathcal A$. But then every
element $a = a+a$ is a quasi-zero, and ${\mathcal
A}_{\operatorname{Null}}= \mathcal A$, so one needs to apply
symmetrization in order to obtain structure theory along the lines
of systems.

%
\end{enumerate}

\item
Semple and Whittle \cite{SW} implemented the notion of partial
fields, which could be seen as a weakened definition of hypergroup,
where
 addition $\tT \times \tT \to \tT$ is only defined on certain sets
 of elements. Thus $\mathcal A$ is a $\tT$-set, but not necessarily a $\tT$-module. Following an idea of \cite[\S 2.2]{BL1}, we can define
  \textbf{partial addition} on   $\tTz$ in a triple $(\mathcal A, \tT, (-))$
by accepting the operation $a_1 + a_2 = a_3$ only when $ a_3 \in
\mathcal A ^\circ.$ $\tT$~together with its  partial addition  is
called a \textbf{partial semigroup}. 
In a metatangible triple the only defined sums in the partial addition are for $a_2 = (-)a_1,$ so the partial semigroup is trivial.
\end{example}

\begin{example}\label{morphmean}
We describe $\preceq$-morphisms for the systems from Example~\ref{precmain}. These are often preferable to homomorphisms.
\begin{enumerate}\eroman\item  In supertropical
mathematics, a $\preceq$-morphism $f$ satisfies
\begin{equation}\label{mor1}
f(b_1+b_2)+\text{ghost} = f(b_1)+f(b_2);
\end{equation}
\eqref{mor1} implies that either  $f(b_1+b_2)  = f(b_1)+f(b_2)$, or
$f(b_1)+f(b_2)$ is ghost.

\item
For hypersystems, a  $\preceq$-morphism $f$ satisfies
\begin{equation}\label{morphmean3}
f(b_1\boxplus b_2)\subseteq f(b_1)\boxplus f(b_2),
\end{equation}
the definition used in \cite[Definition~2.1]{CC} and
\cite[Definition~2.4]{GJL}.  This is intuitive when $f$ maps the
hyperring $\tT$ into itself.

Given  a hypersystem $({\mathcal A}\subseteq \mathcal P(\tT), \tT,
(-),\subseteq)$ and a  hypergroup morphism $f$ over $\tT$, it is
natural to extend
 $f$ to ${\mathcal A}$ via $$f( b ) = \{ f(a): a \in
 b \}.$$
In this  case, if $f(b) (-) f(b') \succeq \zero,$ there is some
hypergroup element $a \in f(b) \cap f(b').$


\item
For fuzzy rings, in \cite[\S~1]{Dr}, also see \cite[Definition~2.17
]{GJL}, a homomorphism $$f: (K;+,\cdot, \vep _K,K_0)\to (L;+;\cdot ,
\vep _L,L_0)$$ of fuzzy rings is defined as satisfying: For any
$\{a_1 ,\dots, a_n \} \in  K ^\times$  if $\sum _{i=1}^n a_i \in
K_0$ then $\sum _{i=1}^n f (a_i) \in L_0.$ Any $\succeq$-morphism in
our setting is a fuzzy homomorphism since $L_0$ is an ideal, and
thus $\sum _{i=1}^n f (a_i) \in f (\sum _{i=1}^n a_i) + L_0 = L_0.$
The other direction might not hold. The same reasoning holds for
tracts of \cite{BB1}.

One subtle point about fuzzy rings is that, as pointed out in
\cite[Theorem 11.8]{Row16}, although any fuzzy ring gives rise to a
triple and thus a system, conceivably the set $\mathcal A_0$ could
properly contain $\mathcal A^{\circ}$,
cf.~\cite[Remark~11.3]{Row16}, and thus we could not define
$\preceq$ according to $\mathcal A_0$. (One could define $\preceq$
according to Example~\ref{precmain0}, but $\mathcal A_0$ might not
match the set of quasi-zeros.) We refer the readers to \cite{GJL} for more details on the functorial relation between hyperrings and fuzzy rings. 
%
\item Another interesting example comes from valuation theory.
In \cite[Definition~8.8(ii)]{Row16}, valuations are displayed as
$\preceq$-morphisms of semirings, writing the target of the
valuation as a semiring (using multiplicative notation instead of
additive notation) via Green's relation of
Example~\ref{precmain}(viii).   Here $\varphi(b_1 b_2) =
\varphi(b_1) \varphi(b_2) .$ If we instead wrote $\varphi(b_1 b_2)
\preceq \varphi(b_1) \varphi(b_2) ,$ we would have a
``quasi-valuation.'' This pertains in particular to the Puiseux
valuation.
\end{enumerate}\end{example}

\section{Semiring analogs of classical module theory}\label{congr0}

In this section we introduce the analogs of classical module
concepts, for modules over semiring
 systems.  We   consider the category of  systemic modules
$(\mathcal M, \tT_{\mathcal M} , (-), \preceq)$ over a semiring
 system  $(\mathcal A, \tT_{\mathcal A} , (-), \preceq).$

\subsection{Congruences, precongruences, and weak congruences}\label{congr}$ $

One of the the big differences between homomorphisms in semiring theory
in comparison to ring theory is the lack of a bijection between modules and kernels of homomorphisms.
This makes necessary the use of congruences, which are equivalence relations
that respect the given operations (in our case multiplication, addition, and negation).
Congruences are subsets of $\widehat{\mathcal A}$ which also are subalgebras.
Moreover, failure of the surpassing relation to be an identity hampers the definition of quotient (factor) objects.
Here we impose some restrictive conditions in order to overcome this problem.


\begin{defn}\label{precongmor0} A congruence $\Cong$ on a system is \textbf{convex} if
whenever $b \preceq b'$ and $(b,c) \in \Cong$ and   $(b',c') \in
\Cong$ then $c \preceq c'$.\end{defn}

\begin{lem}\label{lemma: quotient module} \begin{enumerate}\eroman
\item If $\mathcal{M}$ is a module with negation $(-)$ over a triple
and $\Cong$ is a congruence relation on~$\mathcal{M}$, then the
quotient $\mathcal{M}/\Cong$ also has a negation given by $(-)[b] =[(-)b]$.

\item Let $\mathcal{M}$ be a systemic module and $\Cong$ be a congruence
relation on $\mathcal{M}$; then the quotient $\mathcal{M}/\Cong$ is also a
systemic module with the surpassing relation given  for $[b],[b']
\in \mathcal{M} /\Cong$ by:
\begin{equation}\label{eq: quotient partial order}
[b] \preceq [b'] \iff c \preceq c' \textrm{ in $\mathcal{M}$ for
some }c \in [b], c' \in [b'],
\end{equation}
where $[b]$ is the equivalence class of $b \in \mathcal{M}$ in
$\mathcal{M}/\Cong$.\end{enumerate}
\end{lem}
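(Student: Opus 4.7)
The plan is to verify that the two pieces of structure — the negation and the surpassing relation — descend from $\mathcal{M}$ to $\mathcal{M}/\Cong$ coherently, treating each axiom in turn.

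For part (i), I would first check that $(-)[b] := [(-)b]$ is well-defined. Since $\Cong$ is a congruence, it respects all the primitive operations including the negation; hence $b \Cong b'$ implies $(-)b \Cong (-)b'$, so the class $[(-)b]$ depends only on $[b]$. The resulting map has order $\le 2$ because $(-)(-)[b] = [(-)(-)b] = [b]$. For compatibility with the $\tT$-action, for $a \in \tT$ and $b \in \mathcal{M}$ one computes $((-)_{\tT} a)[b] = [((-)_{\tT} a)b] = [(-)(ab)] = (-)[ab] = (-)(a[b])$ using Definition~\ref{negmap} in $\mathcal M$, and similarly $a((-)[b]) = [(-)(ab)] = (-)(a[b])$. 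Thus the induced map on $\mathcal{M}/\Cong$ is a negation map in the sense of Definition~\ref{negmap}.

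For part (ii), I would let $\tT_{\mathcal{M}/\Cong}$ be the image of $\tT_{\mathcal{M}}$ under the quotient map; this is a spanning $\tT_\mathcal{A}$-stable subset because the quotient map is surjective and $\Cong$ respects scaling. The bulk of the work is showing that $\preceq$, as given by \eqref{eq: quotient partial order}, is a surpassing relation in the sense of Definition~\ref{precedeq07}. Reflexivity is immediate from $b \preceq b$; the properties (i)–(iv) of Definition~\ref{precedeq07} each follow by lifting to representatives, applying the corresponding property in $\mathcal{M}$, and projecting back (for instance, given $c \in [b_1]$, $c' \in [b_2]$ with $c \preceq c'$, one has $a c \preceq a c'$ in $\mathcal M$ and $a c \in [a b_1]$, $a c' \in [a b_2]$, giving $a[b_1] \preceq a[b_2]$). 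Property (v), namely that $\preceq$ restricts to equality on tangibles, together with unique negation, follow from the fact that tangibles in $\mathcal{M}/\Cong$ are images of tangibles in $\mathcal{M}$.

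The main obstacle will be transitivity of $\preceq$ on the quotient: if $[b] \preceq [b']$ via $c_1 \preceq c_1'$ with $c_1 \in [b]$, $c_1' \in [b']$, and $[b'] \preceq [b'']$ via $c_2 \preceq c_2'$ with $c_2 \in [b']$, $c_2' \in [b'']$, then $c_1'$ and $c_2$ are both in $[b']$, i.e.\ $c_1' \Cong c_2$, but this does not a priori force any $\preceq$ relation between $c_1$ and $c_2$. This is precisely where the convexity of $\Cong$ (Definition~\ref{precongmor0}) enters: applied to $c_1 \preceq c_1'$ with $c_1 \Cong c_1$ and $c_1' \Cong c_2$, convexity yields $c_1 \preceq c_2$, and chaining with $c_2 \preceq c_2'$ in $\mathcal{M}$ gives $c_1 \preceq c_2'$, hence $[b] \preceq [b'']$. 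So I would either assume $\Cong$ convex here or (following the authors' convention) verify that the congruences arising in the sequel are convex.

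Finally, to confirm the full systemic module structure I would check that $\mathcal{M}/\Cong$ is uniquely negated: if $[a] \in \tT_{\mathcal{M}/\Cong}$ and $[\zero] \preceq [a] + [b]$ for some $[b] \in \tT_{\mathcal{M}/\Cong}$, then lifting to representatives and using unique negation in $\mathcal{M}$ together with (v) identifies $[b] = (-)[a]$. The compatibility of $\preceq$ with the $\mathcal A$-action, $am \preceq a'm'$ whenever $a \preceq a'$ and $m \preceq m'$, is verified by the same lift-to-representatives argument as in the proof of (iv) above.
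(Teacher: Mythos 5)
Your proposal follows the same overall strategy as the paper's proof — lift to representatives, apply the axiom in $\mathcal M$, project back — but you carry it out in detail, whereas the paper's proof of (ii) is a single sentence: ``it is well-known that the relation on a quotient of a poset defined as in \eqref{eq: quotient partial order} is again a partial order. Now, one can easily check that $\mathcal{M}/\Cong$ is indeed a systemic module.''

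The significant point is that your concern about transitivity is genuine, and the paper's appeal to a ``well-known'' fact is incorrect as stated. For an arbitrary equivalence relation on a poset, the induced relation of \eqref{eq: quotient partial order} need not be transitive: on $\{a,b,c,d\}$ with only $a\le b$ and $c\le d$, identifying $b\sim c$ gives $[a]\le[b]$ and $[b]\le[d]$ but not $[a]\le[d]$. So some compatibility between $\Cong$ and $\preceq$ is needed, and you correctly recognize that the convexity of Definition~\ref{precongmor0} (which the paper introduces immediately before this lemma, suggestively but without invoking it in the hypotheses) is exactly the condition that repairs the argument: from $c_1 \preceq c_1'$, $c_1 \Cong c_1$, $c_1' \Cong c_2$, convexity gives $c_1 \preceq c_2$, which composes with $c_2 \preceq c_2'$. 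The same mechanism underwrites axiom (v) of Definition~\ref{precedeq07} and unique negation on the quotient, where your remark that ``tangibles in $\mathcal M/\Cong$ are images of tangibles'' by itself is not quite sufficient — the representatives $c,c'$ witnessing $[a]\preceq[b]$ need not themselves be tangible, so again one must pull the inequality back onto tangible representatives via convexity before applying uniqueness in $\mathcal M$. (Alternatively, if one restricts to $\preceq=\preceq_\circ$ as in Example~\ref{precmain0}(i), the quotient relation coincides with the intrinsic $\preceq_\circ$ on $\mathcal M/\Cong$ — because $c_2=c_1+d^\circ$ forces $[b_2]=[b_1]+[d]^\circ$ — and transitivity is then automatic without convexity; this may be the special case the paper has silently in mind.) In short, your proof is correct once convexity is assumed, and you have identified a real gap in the paper's argument rather than introduced one of your own.
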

\begin{proof} (i) If $[b'] = [b]$ then $(b,b') \in \Cong$ so
$(-)(b,b') = ((-)b,(-)b')\in \Cong.$

(ii) It is clear that $\mathcal{M}/\Cong$ is a module. Furthermore,
it is well-known that the relation on a quotient of a poset
defined as in \eqref{eq: quotient partial order} is again a partial
order. Now, one can easily check that $\mathcal{M}/\Cong$ is indeed
a systemic module.
\end{proof}

Lemma~\ref{lemma: quotient module} points to an important difference
between triples and systems. Whereas $\preceq$ plays a critical
model-theoretic role, it hampers the category theory since it
restricts the use of quotient objects. (The quotient of a system
need not be a system, cf.~ Example \ref{anni} below.) This will be
reflected in \S\ref{Grand}.

We generalize congruences slightly, since reflexivity often fails.

\begin{defn}\label{precongmor}
Let $(\mathcal A,\tT,(-),\preceq)$ be a system.
\begin{enumerate}\eroman
\item  The \textbf{diagonal congruence} $\Diag_{\mathcal M}$ of a systemic
 $\mathcal A$-module $\mathcal M$ is $\{(b,b): b \in \mathcal M\}$.
\item
A \textbf{precongruence} of a systemic module $\mathcal M$ is a
$\preceq$-submodule $\Cong$ of $\widehat{\mathcal M}$, which also is symmetric. In other words, precongruences satisfy all the properties of congruences except perhaps reflexivity and transitivity, and preserve the $\tT$-action.

\item $\Cong'$ is a \textbf{weak congruence} if $\Cong' = \Cong \cup
\Diag_{\mathcal A},$ where $\Cong $ is a transitive precongruence.
\end{enumerate}\end{defn}

\begin{lem} Any weak congruence is a transitive precongruence.
\end{lem}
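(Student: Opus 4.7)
The plan is to check the three defining properties of a transitive precongruence for $\Cong' = \Cong \cup \Diag_{\mathcal A}$ in turn---symmetry, the $\preceq$-submodule structure on $\widehat{\mathcal M}$, and transitivity---by case analysis on whether the elements at hand lie in $\Cong$ or in $\Diag_{\mathcal A}$.

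Symmetry is immediate, since $\Cong$ is symmetric by hypothesis and $\Diag_{\mathcal A}$ is trivially so, so their union inherits the property. For transitivity, given $(a,b),(b,c) \in \Cong'$, I would split into four cases. When both pairs lie in $\Cong$, the transitivity assumption on $\Cong$ closes the argument; when both lie in $\Diag_{\mathcal A}$ we have $a = b = c$. In the two mixed cases one of the equalities $a = b$ or $b = c$ holds, which reduces $(a,c)$ to the remaining pair already in $\Cong'$. Closure of $\Cong'$ under the $\tT$-action and under $(-)$ is similarly handled case-by-case, since both $\Cong$ and $\Diag_{\mathcal A}$ are individually closed under these operations.

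The main obstacle is closure under addition, specifically the mixed case $(a,b) \in \Cong$ and $(c,c) \in \Diag_{\mathcal A}$: I must show $(a+c, b+c) \in \Cong'$. My strategy is to invoke the $\preceq$-submodule property of $\Cong$ together with the observation that every diagonal element $(c,c)$ of $\widehat{\mathcal M}$ satisfies $(\zero,\zero) \preceq_{\operatorname{sym}} (c,c)$ (taking $c$ itself as the witness in Definition~\ref{symrprec}, so that $\zero + c = c$ in each slot). The $\preceq$-submodule condition on $\Cong$ then gives $(a,b) + (c,c) = (a+c, b+c) \in \Cong \subseteq \Cong'$. The analogous computation with the two pairs swapped handles the remaining mixed case, and the pure cases are immediate from $\Cong$ and $\Diag_{\mathcal A}$ each being submodules. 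The $\preceq$-extra condition required for $\Cong'$ to be a $\preceq$-submodule then follows from the same observation applied to $\Diag_{\mathcal A}$-elements, completing the verification.
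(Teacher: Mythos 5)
Your handling of symmetry and transitivity matches the paper's argument exactly: the paper's entire proof is the one-liner ``the adjunction of $\Diag_{\mathcal A}$ does not affect symmetry or transitivity,'' and your four-case breakdown is what that sentence unpacks to.

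The divergence is in your closure-under-addition step, and this is where I would push back. You invoke $(\zero,\zero) \preceq_{\operatorname{sym}} (c,c)$ for every diagonal element together with the $\preceq$-submodule property of $\Cong$ in $\widehat{\mathcal M}$ to conclude $(a,b)+(c,c) \in \Cong$. But if $\preceq_{\operatorname{sym}}$ really is the surpassing relation governing the $\preceq$-submodule structure on $\widehat{\mathcal M}$, then the very same argument applied to the element $(\zero,\zero) \in \Cong$ immediately yields $(\zero,\zero)+(c,c)=(c,c)\in\Cong$ for every $c$, i.e., $\Diag_{\mathcal M} \subseteq \Cong$. In that case $\Cong' = \Cong \cup \Diag_{\mathcal M} = \Cong$ and the lemma becomes a tautology, which you would want to say explicitly rather than run a case analysis on a union that collapses. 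If instead the relevant surpassing relation on $\widehat{\mathcal M} = \mathcal M \oplus \mathcal M$ is the componentwise one (which is what Example~\ref{reve}(ii) prescribes for direct sums, and which is consistent with the paper treating the adjunction of the diagonal as a genuine enlargement), then $(\zero,\zero) \preceq (c,c)$ fails for a general $c$, and the step you flagged as the main obstacle does not go through; in that reading the mixed addition case $(a,b)+(c,c) \in \Cong'$ is a real gap that neither you nor the paper's own proof closes. Either way, your argument as written is not in good shape: under one reading it quietly proves more than it claims (that $\Cong' = \Cong$) without acknowledging it, and under the other the key addition step is unjustified. It would be worth pinning down which $\preceq$ is meant on $\widehat{\mathcal M}$ before deciding whether the additive closure claim can be rescued.
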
 \begin{proof} The adjunction of $\Diag_{\mathcal A}$ does
not affect symmetry or transitivity.
\end{proof}

 \begin{rem} For any subset $S$ of $\mathcal A \times \mathcal A,$
 the intersection of all congruences containing $S$ is the
 ``smallest'' congruence containing $S$. But this can be much larger
 than $S$, even when $S$ is a weak congruence.
\end{rem}
 Let us see some examples.

\begin{example}\label{anni}
Let $\mathcal M$ be a  module over $\mathcal A$.
\begin{enumerate}\eroman

\item For any subset $\mcS \subseteq \mathcal A,$ the \textbf{congruence annihilator}
\[
\Ann _{\mathcal M, \operatorname{cong}}\mcS= \{ (b,b') \in \widehat{
\mathcal M}:  s b  = s  b'  , ~ \forall s \in \mcS\}
\]
is a congruence of $\mathcal M$. In particular,
$$\Ann _{\mathcal M, \operatorname{cong}}a =  \{ (b,b') \in \widehat{\mathcal M}: ab = ab'\}.$$
However, the congruence $ \Ann _{\mathcal M, \operatorname{cong}}$
need not be convex.
\item
Suppose that $\mathcal M$  has a signed decomposition over a triple
$\mathcal A$. For any $b_0, b_1 \in \mathcal M^+,$ define
$$\widehat{\mathcal A} (b_0, b_1)   = \{ (c_0 b_0 +  c_1 b_1, c_0 b_1+ c_1 b_0 ) : c_0,c_1
\in \mathcal A ^ + \}.$$ Then the   set
 $\widehat{\mathcal A} (b_0, b_1)$
is a precongruence.



\item The \textbf{$\preceq$-diagonal precongruence} is $\{(b,b') \in \widehat{ \mathcal M}: b'(-)b \succeq \zero\}.$


\end{enumerate}
\end{example}
%



\begin{defn} If $\Cong$ is a precongruence of $ \mathcal M
$, and $\Cong'$ is a  congruence,   then the \textbf{quotient
precongruence}
 $\Cong/\Cong'$ is the precongruence on  $\mathcal M/\Cong'$ consisting
 of all pairs of equivalence classes $  ([b_0],[b_1]) $ with respect to $\Cong',$
  with $b_i \in \mathcal M  $ and $\ (b_0, b_1) \in \Cong$.
\end{defn}


We can act directly on the precongruences.

 \begin{rem}  $ $
 \begin{enumerate}\eroman
\item If $\Cong' \subseteq \Cong $ are  congruences, the quotient  $\Cong/\Cong'$ is
a congruence. When  $\Cong = \Cong'$ and  $[b_0] = [b_1]$ with
respect to $\Cong,$ then   $[b_0] = [b_1]$ with respect to $\Cong',$
so $\Cong/\Cong'$ is the
 diagonal congruence.

\item When $\Cong$ is merely a precongruence, the quotient  $\Cong/\Cong'$ is   a
precongruence.
\end{enumerate}
\end{rem}

\subsubsection{Morphisms of $\preceq$-precongruences and congruences}$ $

%
%

 We consider the category of  systemic modules
$(\mathcal M, \tT_{\mathcal M} , (-), \preceq)$ over a semiring
 system  $(\mathcal A, \tT_{\mathcal A} , (-), \preceq).$




\begin{defn}\label{congmor}
Let $\Cong$ (resp.~$\Cong'$) be a precongruence on $\mathcal M$ (resp.~$\mathcal N$).

\begin{enumerate}\eroman
\item
A \textbf{precongruence
    $\preceq$-morphism} $\hat f = (f_0,f_1): \Cong \to \Cong' $ is
    given by $$f(b,b') = (f_{00}(b) +  f_{01}(b'), f_{10}(b) +
    f_{11}(b')), \ \forall b,b' \in \mathcal M$$ where $f_i: b
    \mapsto (f_{i0}(b),f_{i1}(b))$ for $\preceq$- morphisms $f_{ij} : \mathcal M \to \mathcal N .$

     When $\hat{f}$ also is a homomorphism of
modules, we call it a precongruence homomorphism.
\end{enumerate}
\end{defn}

Of course we can delete ``pre'' when   $ \Cong ,\Cong' $ are congruences.
 Clearly the
composition $\hat f \hat g$ of  precongruence $\preceq$-morphisms
(resp.~homomorphisms) is the  precongruence $\preceq$-morphism
(resp.~homomorphism) $\widehat{fg}$.

\begin{defn} $ $
\begin{enumerate} \eroman
    \item
    A precongruence $\preceq$-morphism $\hat f=(f_0,f_1) : \Cong \to
    \Cong'$ is $(\preceq_{\Cong''})$-\textbf{trivial} for a
    sub-$\preceq$-precongruence $\Cong''$ of~$\Cong'$, if $\hat f(\Cong)
    \succeq \Cong''$.
    \item
A precongruence $\preceq$-morphism $\hat f : \Cong \to \Cong'$ is
\textbf{$\preceq_{\operatorname {sym}}$-trivial} if $\hat f$ is
$\Cong''$-trivial for $\Cong''=\Diag_{\Cong'}$, i.e., $\hat f(\Cong)
\succeq \Diag_{\Cong'}$.
\end{enumerate}
\end{defn}

\begin{example}\label{triv11} The  \textbf{left $\preceq$-trivial  precongruence
homomorphism $\hat f : \Cong \to \Diag_{\mathcal M}$ is given by
$\hat f(a_0,a_1) = (a_0,a_0).$}
 It is a  precongruence homomorphism since
 $$\hat f((a_0,a_1)+ (a_0',a_1')) = \hat f(a_0+a_0',a_1+ a_1')= (a_0+a_0',a_0+ a_0')= (a_0,a_0)+ (a_0',a_0') =
  \hat f(a_0,a_1)+ \hat f(a_0',a_1').$$
 \end{example}

\subsection{Monics, kernels, cokernels, and images} $ $

\begin{defn}\label{ont}
Let $\mathcal M$ and $\mathcal N$ be systemic modules over a semiring system $\mathcal A$, and $f: \mathcal M\to \mathcal N$ be a $\preceq$-morphism.
\begin{enumerate} \eroman
\item
We say that $f$ is $\preceq$-\textbf{monic} if $f(b_0)\preceq f(b_1)
$ implies $b_0 \preceq b_1.$
\item
 We say that $f$ is \textbf{N-monic} if the symmetrization of $f$, that is $\hat{f}=(f,f)$, is
$\preceq$-monic; i.e., $f(b_0)+c = f(b_1)$ and $f(b_0') +c=
    f(b_1')$ for $c \in \mathcal N$ implies $b_0 +c' = b_1$ and $b_0' +c'
    = b_1'$ for some $c' \in \mathcal M.$

    \item $f$ is an $\preceq$-\textbf{isomorphism} if there is a $\preceq$-morphism $g: \mathcal N\to \mathcal M$
    for which $gf = 1_{ \mathcal M}$ and $ 1_{ \mathcal N} \preceq
    fg$ (elementwise).
\end{enumerate}
\end{defn}



This has been done in general in universal algebra by means of
``equalizers'' \cite[\S~9.1]{BaW} and ``kernel pairs''
\cite[Exercise~9.4.4(5)]{BaW}, but the theory is rather intricate,
so we   just bypass it.



 %

\subsubsection{Kernels}$ $

Kernels are  tricky. Let us start with a naive definition,
%

%

 \begin{defn}\label{modker}
Let $\mathcal M$ and $\mathcal N$ be systemic modules over $\mathcal
A$, and $f: \mathcal M \to \mathcal N$ a $\preceq$-morphism.
\begin{enumerate} \eroman
    \item
        A $\preceq$-morphism $f: \mathcal M\to \mathcal N$ is
    \textbf{null} if $f(\mathcal M)\subseteq \mathcal N_{\operatorname{Null}}.$

 \item A $\preceq$-submodule $\mathcal M'$ of $\mathcal M$ is   \textbf{$f$-null} if
$f(a) \in \mathcal N_{\operatorname{Null}}$ for all $a \in \mathcal
M'.$ The \textbf{null-module kernel} $\ker_{\Mod,\mathcal M} f$  of
$f$ is the sum of all $f$-null $\preceq$-submodules of $\mathcal M$. 

\item
Suppose further that $f$ is a homomorphism. The \textbf{classical
kernel}  is $\{ b  \in \mathcal M : f(b ) = \zero\}.$\end{enumerate}
 \end{defn}

We bypass the classical kernel in tropical homology, since $\zero$
is too special, cf.~ \cite[Example~3.4]{CC2}.

\begin{example}
    Define the \textbf{left multiplication map} $\ell_a : \mathcal M \to
a \mathcal M$ by  $\ell_a(b) = ab$. $\ell_a $ is a homomorphism, and
its kernel is $\Ann _{\mathcal M, \operatorname{cong}} a .$
\end{example}

 \begin{rem}$ $
 \begin{enumerate}\eroman
\item
  The composition of a null $\preceq$-morphism with any
  $\preceq$-morphism is null. Thus null $\preceq$-morphisms  take the place of the $\zero$ morphism.
\item
Let $f:\mathcal M \to \mathcal N$ be a $\preceq$-morphism. If $b \in \mathcal M_{\operatorname{Null}}$ then $f(b)\notin \tT_{\mathcal N}$ since $\mathcal T_{\mathcal N} \cap \mathcal N_{\operatorname{Null}} =\emptyset$.

%
%

\end{enumerate}
      \end{rem}
%
%
%

%
%

\subsubsection{Congruence kernels and congruence cokernels}$ $%

Here are other candidates for kernel, which may be  more natural
when working directly within the module category. Since morphisms
are defined in terms of congruences, we are led to bring congruences
into the picture.

%
%
%
%
%
%
%

 \begin{defn}
 Let $f:\mathcal M \to \mathcal M'$ be a $\preceq$-morphism of systemic modules.
 \begin{enumerate}\eroman
    \item
 The  \textbf{precongruence kernel} $\pker
 _{\operatorname{cong} }f$ of $f$ is $$\{(b_0,b_1) \in \mathcal M\oplus \mathcal M:
 f(b_0) \nabla f(b_1)\}.$$
 \item
The $\preceq$-\textbf{precongruence kernel} $\pker
_{\preceq,\operatorname{cong} }f$ of $f$ is $$\{(b_0,b_1) \in \mathcal M\oplus
\mathcal M: f(b_0) \preceq f(b_1)\}.$$

The N-\textbf{congruence kernel} $\ker  _{N}f$ \footnote{Note that this is just an equalizer of $f$.} of $f$ is
$$\{(b_0,b_1) \in \mathcal M\oplus \mathcal M: f(b_0) = f(b_1)\}.$$
 \end{enumerate}
 \end{defn}



\begin{lem}
The precongruence kernel of a homomorphism $f$ is a reflexive and
symmetric precongruence.
\end{lem}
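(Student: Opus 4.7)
My plan is to verify the three required properties in turn, each reducing to one of the axioms of a surpassing relation applied to the condition $f(b_0)\,\nabla\, f(b_1)$, i.e.\ $f(b_0)(-)f(b_1) \succeq \zero$.

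For \emph{reflexivity}, I need $(b,b) \in \pker_{\operatorname{cong}} f$, which unfolds to $f(b)(-)f(b) = f(b)^\circ \succeq \zero$. This is immediate from Definition~\ref{precedeq07}(i). For \emph{symmetry}, suppose $(b_0,b_1) \in \pker_{\operatorname{cong}} f$, so $\zero \preceq f(b_0)(-)f(b_1)$. Applying $(-)$ to both sides (Definition~\ref{precedeq07}(ii)) and using Lemma~\ref{circget11} to see $(-)\zero=\zero$, we get $\zero \preceq (-)f(b_0)+f(b_1) = f(b_1)(-)f(b_0)$, so $(b_1,b_0) \in \pker_{\operatorname{cong}} f$.

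For the \emph{precongruence} property, I must verify closure under addition, closure under the $\tT$-action, and the $\preceq$-submodule condition. Given $(b_0,b_1),(b_0',b_1') \in \pker_{\operatorname{cong}} f$, I use that $f$ is a homomorphism to compute
\[
f(b_0+b_0')(-)f(b_1+b_1') \;=\; \bigl(f(b_0)(-)f(b_1)\bigr)+\bigl(f(b_0')(-)f(b_1')\bigr) \;\succeq\; \zero+\zero = \zero
\]
by Definition~\ref{precedeq07}(iii), so the sum lies in $\pker_{\operatorname{cong}} f$. For $a \in \tT$, Definition~\ref{precedeq07}(iv) yields
\[
f(ab_0)(-)f(ab_1) \;=\; a\bigl(f(b_0)(-)f(b_1)\bigr) \;\succeq\; a\,\zero \;=\; \zero.
\]

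Finally, for the $\preceq$-submodule condition, suppose $(b_0,b_1) \in \pker_{\operatorname{cong}} f$ and $(c_0,c_1) \succeq (\zero,\zero)$ in $\widehat{\mathcal M}$. Since $f$ is a $\preceq$-morphism (ordered map) with $f(\zero)=\zero$, I get $f(c_i) \succeq \zero$ for $i=0,1$, hence $(-)f(c_1)\succeq \zero$ by Proposition~\ref{nab}, and then
\[
f(b_0+c_0)(-)f(b_1+c_1) \;=\; \bigl(f(b_0)(-)f(b_1)\bigr) + f(c_0) + \bigl((-)f(c_1)\bigr) \;\succeq\; \zero
\]
by Definition~\ref{precedeq07}(iii). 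No step looks delicate; the only small point to watch is that we use $f$ is a \emph{homomorphism} (not merely a $\preceq$-morphism) so that $f(b_0+b_0')=f(b_0)+f(b_0')$ and $f(ab)=af(b)$ hold as equalities, which is what makes the closure computations clean rather than one-sided $\preceq$-inequalities.
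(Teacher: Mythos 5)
Your proof is correct and follows essentially the same route as the paper's: symmetry, scalar-action, and the $\preceq$-submodule condition are routine from the definitions, and the one place where the homomorphism hypothesis is really used is closure under addition, which you and the paper handle identically. You are actually a bit more complete than the paper, which dismisses reflexivity and the other routine checks as clear; in particular, your observation that reflexivity comes down to $f(b)^\circ \succeq \zero$, i.e.\ Definition~\ref{precedeq07}(i), makes that part explicit.
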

\begin{proof}
By the definition, it is clear that the precongruence kernel is
symmetric and closed under coordinate-wise negation map and scalar
multiplication. If  $ f(b_0) $ balances $f(b_1) $ and $ f(b_0') $
balances $f(b_1') ,$ then since $f$ is a homomorphism, $ f(b_0+
b_0') $ balances $f(b_1 + b_1').$ This shows that the precongruence
kernel is closed under addition and hence it is a precongruence.
\end{proof}

But it need not be transitive!


%

\begin{rem}
The N-congruence kernel is the null-module kernel in the
symmetrization $\hat f: \widehat{\mathcal M} \to   \widehat{\mathcal
M'}$. In fact, suppose that $\widehat{f}(a,b)$ is null, i.e.,
$f(a)=f(b)$, showing that $(a,b)$ is in the $N$-congruence kernel
of~$\widehat{f}$. Conversely, suppose that $(a,b)$ is an element of
the $N$-congruence kernel of $f$. That is, $f(a)=f(b)$, showing that
$(a,b)$ is an element of the null-module kernel of $\widehat{f}$.

\end{rem}

\begin{lem}\label{kercok0}
For any  homomorphism $f: \mathcal M \to \mathcal N$, there is an
injection  $\bar f :\mathcal M/\NTkerC  f \overset{\overline{f}} \to
\mathcal N,$ given by  $\overline{f} ( \bar b  ) = f(b)$.
\end{lem}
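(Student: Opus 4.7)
The plan is to run the usual first-isomorphism-style argument, but adapted to the systemic setting, with the bulk of the work consisting in verifying that $\NTkerC f$ is a congruence in the strong sense needed by Lemma~\ref{lemma: quotient module}, so that the quotient $\mathcal M/\NTkerC f$ makes sense as a systemic module and the induced map $\bar f$ is well defined.

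First I would unpack the definition: $\NTkerC f = \{(b_0,b_1) \in \mathcal M \oplus \mathcal M : f(b_0) = f(b_1)\}$. Reflexivity, symmetry, and transitivity are immediate from the corresponding properties of equality in $\mathcal N$. Because $f$ is a homomorphism, compatibility with addition, $\tT$-action, and the negation map follows coordinatewise: if $f(b_0)=f(b_1)$ and $f(b_0')=f(b_1')$ then $f(b_0+b_0')=f(b_0)+f(b_0')=f(b_1)+f(b_1')=f(b_1+b_1')$, and similarly $f(ab_0)=af(b_0)=af(b_1)=f(ab_1)$ for $a \in \tT$, and $f((-)b_0)=(-)f(b_0)=(-)f(b_1)=f((-)b_1)$. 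Hence $\NTkerC f$ is a genuine congruence, and Lemma~\ref{lemma: quotient module} gives that $\mathcal M/\NTkerC f$ is itself a systemic module, with $(-)[b] = [(-)b]$ and the induced surpassing relation.

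Next I would define $\bar f : \mathcal M/\NTkerC f \to \mathcal N$ by $\bar f([b]) = f(b)$. Well-definedness is by construction: if $[b_0]=[b_1]$, then $(b_0,b_1) \in \NTkerC f$, whence $f(b_0)=f(b_1)$. That $\bar f$ respects the $\tT$-action, addition, and negation map is inherited directly from $f$ being a homomorphism, together with the rules for $+$, scalar multiplication and $(-)$ on the quotient.

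Finally, injectivity is essentially tautological: if $\bar f([b_0])=\bar f([b_1])$ in $\mathcal N$, then $f(b_0)=f(b_1)$, so $(b_0,b_1) \in \NTkerC f$ and $[b_0]=[b_1]$. The only step that is not automatic is the verification that $\NTkerC f$ is genuinely a congruence (rather than merely a precongruence), but because the defining relation uses equality in $\mathcal N$ — not the non-symmetric $\preceq$ — this reduces to standard equational reasoning and is the point where the homomorphism hypothesis (instead of a mere $\preceq$-morphism) is essential.
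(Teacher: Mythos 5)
Your proof is correct and takes essentially the same approach as the paper's (a first-isomorphism-theorem argument: check well-definedness, homomorphism property, injectivity). The only difference is that you explicitly verify that $\ker_{\operatorname{N}} f$ is a congruence (reflexivity, symmetry, transitivity, compatibility with $+$, $\tT$-action, and $(-)$) before invoking Lemma~\ref{lemma: quotient module}, whereas the paper takes this for granted; that verification is straightforward but worth spelling out, and you correctly identify it as the place where the hypothesis that $f$ is a homomorphism (rather than merely a $\preceq$-morphism) is used.
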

\begin{proof} If $( b , {b' }) \in \NTkerC f,$ then $f(b) = f(b')$,
showing that $\overline{f}$ is well-defined. Clearly $\overline{f}$
is a homomorphism since $\NTkerC (f)$ is a congruence. Finally, one
can easily check that $\overline{f}$ is an injection.
\end{proof}

\begin{lem} \label{kercok} Any homomorphism  $ f: \mathcal M \to \mathcal N $
is composed as  $ \mathcal M  \to \mathcal M/\NTkerC  f \overset
{\overline{f}} \to  \mathcal N,$ where  the first map is the
canonical   homomorphism, and the second map is the injection given
in Lemma~\ref{kercok0}.
\end{lem}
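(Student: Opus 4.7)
My plan is that this is essentially an assembly step: all the substance has already been packaged into Lemma~\ref{kercok0}, and what remains is to verify that the canonical projection plus the induced injection together recover $f$. So I would present it as a short verification divided into three steps: (a) produce the canonical projection, (b) invoke the injection, (c) check the triangle commutes.

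For step (a), I would first confirm that $\NTkerC f = \{(b_0, b_1) \in \mathcal M \oplus \mathcal M : f(b_0) = f(b_1)\}$ really is a congruence on $\mathcal M$, so that the quotient $\mathcal M/\NTkerC f$ is a legitimate systemic module via Lemma~\ref{lemma: quotient module}. Reflexivity, symmetry, and transitivity of $\NTkerC f$ are immediate from the corresponding properties of equality in $\mathcal N$. Closure under the operations uses the hypothesis that $f$ is a homomorphism: if $f(b_0) = f(b_1)$ and $f(b_0') = f(b_1')$, then $f(b_0 + b_0') = f(b_0) + f(b_0') = f(b_1) + f(b_1') = f(b_1 + b_1')$, and similarly $f(a b_0) = a f(b_0) = a f(b_1) = f(a b_1)$ for $a \in \tT$, and $f((-)b_0) = (-)f(b_0) = (-)f(b_1) = f((-)b_1)$. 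Hence $\NTkerC f$ is a congruence, and the canonical map $\pi: \mathcal M \to \mathcal M/\NTkerC f$ given by $b \mapsto \bar b$ is a homomorphism of systemic modules by construction of the quotient.

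For step (b), Lemma~\ref{kercok0} directly supplies the injection $\overline{f}: \mathcal M/\NTkerC f \to \mathcal N$ with $\overline{f}(\bar b) = f(b)$, well-defined precisely because $\NTkerC f$ collapses the indeterminacy in choosing a representative of $\bar b$.

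For step (c), I just compute: for every $b \in \mathcal M$,
\[
(\overline{f} \circ \pi)(b) = \overline{f}(\bar b) = f(b),
\]
so $f = \overline{f} \circ \pi$, which is exactly the claimed factorization. No obstacle arises; the only point one might be tempted to worry about is whether the surpassing relation on the quotient is compatible with $\overline{f}$ being well-behaved, but the statement asks only for the factorization as maps, and this compatibility is handled internally by Lemma~\ref{lemma: quotient module}.
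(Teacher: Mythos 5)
Your proof is correct and follows essentially the same route as the paper's: factor $f$ through the canonical projection onto $\mathcal M/\NTkerC f$ and the induced injection $\overline{f}$ from Lemma~\ref{kercok0}. You simply spell out the details (verifying that $\NTkerC f$ is a congruence using the homomorphism property of $f$, and computing $\overline{f}\circ\pi = f$ explicitly) that the paper's terse proof takes as evident.
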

\begin{proof}
The first map sends $b  \in \mathcal M$ to $\bar b$, which is
clearly a homomorphism since $\ker  _{N} f$ is a congruence. The
second map follows from Lemma~\ref{kercok0}.
\end{proof}
\subsubsection{Precongruences to $\preceq$-submodules} $ $

Since the theory mixes congruences and $\preceq$-submodules, the
next observation could be useful.

 \begin{defn}\label{congsupp21}
For a $\preceq$-precongruence $\Cong$ on a systemic module $\mathcal
M$, define $$\mathcal N  _\Cong =  \{b_0 (-)b_1: (b_0,b_1) \in \Cong
\}.$$
\end{defn}

\begin{lem}\label{congsupp23} 
$\mathcal N  _\Cong $  is a $\preceq$-submodule of  $\mathcal M$,
and gives rise to a homomorphism $\psi_\Cong: \Cong \to \mathcal N
_\Cong $ given by $\psi_\Cong (b_0,b_1) = b_0 (-)b_1.$
 \end{lem}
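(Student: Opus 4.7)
The plan is to verify the two assertions in turn, starting with the submodule properties of $\mathcal N_\Cong$ and then confirming that $\psi_\Cong$ respects the module-with-negation operations. Since $\Cong$ is assumed to be a $\preceq$-submodule of $\widehat{\mathcal M}$ (by Definition~\ref{precongmor}), all closure properties of $\mathcal N_\Cong$ will be pulled back from closure properties of $\Cong$ via the rule $(b_0,b_1) \mapsto b_0 (-) b_1$. So the bulk of the work is bookkeeping across the first and second coordinates.

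First I would check that $\mathcal N_\Cong$ is a submodule in the plain sense. For closure under addition, given $b_0 (-) b_1$ and $b_0' (-) b_1'$ in $\mathcal N_\Cong$ coming from $(b_0,b_1),(b_0',b_1') \in \Cong$, closure of $\Cong$ under addition in $\widehat{\mathcal M}$ gives $(b_0+b_0',\, b_1+b_1') \in \Cong$, and its image under subtraction is precisely $(b_0 (-) b_1) + (b_0' (-) b_1')$. Closure under the $\tT$-action is identical, using that $\Cong$ preserves the $\tT$-action. For closure under negation, $(-)(b_0 (-) b_1) = b_1 (-) b_0$, which lies in $\mathcal N_\Cong$ by the symmetry hypothesis on the precongruence $\Cong$. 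Finally, $(\zero,\zero) \in \Cong$ (as any submodule contains the zero element), giving $\zero \in \mathcal N_\Cong$ via Lemma~\ref{circget11}.

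Next I would upgrade this to a $\preceq$-submodule per Definition~\ref{modu218}, i.e., verify the upward closure: if $n = b_0 (-) b_1 \in \mathcal N_\Cong$ and $c \succeq \zero$, then $(c,\zero) \succeq (\zero,\zero)$ componentwise in $\widehat{\mathcal M}$, so the $\preceq$-submodule property of $\Cong$ yields $(b_0,b_1) + (c,\zero) = (b_0 + c,\, b_1) \in \Cong$, whose image is $n + c$. Thus $\mathcal N_\Cong$ absorbs additions of elements $\succeq \zero$.

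For the homomorphism claim, the three conditions of Definition~\ref{modu2} (together with compatibility with $(-)$) are straightforward algebra on pairs: additivity expands as
\[
\psi_\Cong\bigl((b_0,b_1)+(b_0',b_1')\bigr) = (b_0+b_0')(-)(b_1+b_1') = (b_0(-) b_1)+(b_0'(-) b_1'),
\]
scalar-homogeneity uses distributivity $a(b_0 (-) b_1) = ab_0 (-) ab_1$, and compatibility with $(-)$ follows from $((-)b_0)(-)((-)b_1) = (-)(b_0 (-) b_1)$. The only real point to watch is that the codomain $\mathcal N_\Cong$ carries the inherited operations of $\mathcal M$, not the switch structure of $\widehat{\mathcal M}$, so the negation on $\Cong$ used here is the coordinate-wise one. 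I anticipate no substantial obstacle; the main care needed is simply to use the symmetry of $\Cong$ in exactly the one spot (closure under $(-)$) where it is indispensable, and to distinguish the $\widehat{\mathcal M}$-level operations from the $\mathcal M$-level operations that $\psi_\Cong$ transports them to.
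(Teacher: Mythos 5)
Your proposal is correct and takes essentially the same approach as the paper's own proof: pull back each closure property of $\mathcal N_\Cong$ from the corresponding closure property of $\Cong$ as a $\preceq$-submodule of $\widehat{\mathcal M}$, then check $\psi_\Cong$ is a homomorphism by direct computation on pairs. The only (immaterial) variation is in how closure under negation is justified: you invoke the symmetry of the precongruence to get $(b_1,b_0)\in\Cong$ directly, whereas the paper applies coordinate-wise negation to obtain $((-)b_0,(-)b_1)\in\Cong$ and then simplifies; these are equivalent uses of the precongruence hypotheses and both lead to $b_1(-)b_0\in\mathcal N_\Cong$.
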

\begin{proof}
 {If  $b_0 (-)b_1,\  b_0' (-)b_1' \in \mathcal N  _\Cong $, where we
have  $(b_0,b_1),(b_0',b_1')\in   \Cong,$  then
$(b_0+b_0',b_1+b_1')\in   \Cong,$ so  $$(b_0(-)b_1) +(b_0'(-)b_1')=
(b_0+b_0')(-)(b_1+b_1')  \in \mathcal N  _\Cong. $$ Also $a(b_0,b_1)
= ab_0 (-)ab_1,$ whereas $(ab_0 ,ab_1) \in \Cong.$ This shows that
$\mathcal N_\Cong$ is closed under addition and scalar multiplication.
Furthermore, since $((-)b_0,(-)b_1) \in \Cong$, we have that
\begin{equation} \label{strict}
(-)b_0 (-) ((-)b_1) = (-)b_0 + b_1 = b_1(-)b_0 \in \mathcal N _\Cong,
\end{equation}
showing that $\mathcal N_\Cong$ is closed under negation. Next,
suppose that $c$ is any element of $\mathcal M$ such that $\zero \preceq c$. Since $\Cong$ is a $\preceq$-precongruence, we have that $(\zero,c) \in \Cong$, and hence $c \in \mathcal N_\Cong$. It now follows from the fact that $\mathcal N_\Cong$ is closed under addition, we have $b+c \in \mathcal N_\Cong$ for any $b \in \mathcal N_\Cong$ and $c \in \mathcal M_\textrm{Null}$.
The last assertion is an easy
verification, noting that $\psi_\Cong (a(b_0,b_1)) = ab_0 (-)ab_1 =
a\psi_\Cong (b_0,b_1)$} and \eqref{strict}.
  \end{proof}
%
%

\begin{lem}\label{congsupp22}
With the same notation as above, for any homomorphism $f$,
$$\psi_{\Cong}(\mathrm{preker}  _{\preceq,\operatorname{cong} }f)=
\ker_{\Mod,\mathcal M} f=\psi_{\Cong}(\mathrm{preker}
_{\operatorname{cong} }f) .$$
\end{lem}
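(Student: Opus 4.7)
The plan is to establish both equalities by two pairs of inclusions, using the fact that $f$ is a homomorphism (so $f(b_0 (-) b_1) = f(b_0) (-) f(b_1)$) together with Proposition~\ref{nab} to convert between $\preceq$ and $\nabla$. First I would check that, for a homomorphism $f$, the null-module kernel admits the concrete description $\ker_{\Mod,\mathcal M} f = \{b \in \mathcal M : f(b) \succeq \zero\}$: the set on the right is clearly closed under $(-)$ (by axiom (ii) of Definition~\ref{precedeq07}), under the $\tT$-action (axiom (iv)), under addition (because $f$ is a homomorphism and one adds instances of axiom (iii)), and under adjoining Null elements; thus it is itself an $f$-null $\preceq$-submodule, equal to the sum in Definition~\ref{modker}.

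For the forward inclusion $\psi_\Cong(\pker_{\operatorname{cong}} f) \subseteq \ker_{\Mod,\mathcal M} f$, take $(b_0,b_1) \in \pker_{\operatorname{cong}} f$, so $f(b_0) \nabla f(b_1)$, i.e.\ $f(b_0) (-) f(b_1) \succeq \zero$. Since $f$ is a homomorphism, $f(b_0 (-) b_1) = f(b_0) (-) f(b_1) \succeq \zero$, so $\psi_\Cong(b_0,b_1) = b_0 (-) b_1$ lies in $\ker_{\Mod,\mathcal M} f$. The inclusion $\psi_\Cong(\pker_{\preceq,\operatorname{cong}} f) \subseteq \ker_{\Mod,\mathcal M} f$ is the same argument preceded by one application of Proposition~\ref{nab}: from $f(b_0) \preceq f(b_1)$ we obtain $f(b_0) (-) f(b_1) \succeq \zero$, and then proceed as before.

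For the reverse inclusions, given $b \in \ker_{\Mod,\mathcal M} f$, so $f(b) \succeq \zero$, I would exhibit explicit preimages under $\psi_\Cong$. For $\pker_{\operatorname{cong}} f$, take the pair $(b,\zero)$: then $f(b) (-) f(\zero) = f(b) \succeq \zero$, so $(b,\zero) \in \pker_{\operatorname{cong}} f$, while $\psi_\Cong(b,\zero) = b (-) \zero = b$. For $\pker_{\preceq,\operatorname{cong}} f$, take $(\zero, (-)b)$: by axiom (ii) applied to $\zero \preceq f(b)$, we get $\zero = (-)\zero \preceq (-)f(b) = f((-)b)$, i.e.\ $f(\zero) \preceq f((-)b)$, so $(\zero,(-)b) \in \pker_{\preceq,\operatorname{cong}} f$, and $\psi_\Cong(\zero,(-)b) = \zero (-) ((-)b) = b$ since $(-)$ has order $\le 2$.

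The only mild subtlety, and where I would be careful, is the interpretation of $\ker_{\Mod,\mathcal M} f$: the intro definition (preimage) and Definition~\ref{modker} (sum of $f$-null $\preceq$-submodules) must be reconciled, but the preliminary check in the first paragraph handles this for homomorphisms. Everything else is essentially a one-line unwinding of the relevant definitions together with Proposition~\ref{nab}; there is no serious obstacle.
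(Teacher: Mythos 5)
Your argument is correct and follows essentially the same route as the paper's proof: convert the defining condition of each precongruence kernel to $f(b_0)(-)f(b_1)\succeq\zero$ (via Proposition~\ref{nab} in the $\preceq$-case), use that $f$ is a homomorphism to rewrite this as $f(b_0(-)b_1)\succeq\zero$, and exhibit explicit pairs (the paper uses $(\zero,b)$, you use $(b,\zero)$ and $(\zero,(-)b)$) for the reverse inclusions. Your preliminary reconciliation of the preimage description of $\ker_{\Mod,\mathcal M} f$ with the ``sum of $f$-null submodules'' description in Definition~\ref{modker}, and your choice of $(\zero,(-)b)$ so that $\psi_\Cong$ lands exactly on $b$ rather than on $(-)b$, are in fact slightly cleaner than the paper, which leaves both points implicit.
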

\begin{proof}
Suppose that $(b,b') \in \mathrm{preker} _{\preceq,\operatorname{cong} }f$,
i.e., $f(b) \preceq f(b')$. Since $f$ is a homomorphism, we have that
\[
\zero \preceq f(b')(-)f(b) = f(b'(-)b),
\]
showing that $\psi_{\Cong}(b,b')=b'(-)b \in \ker_{\Mod,\mathcal M}
f$. Hence $\psi_{\Cong}(\ker   _{\preceq,\operatorname{cong}
}f)\subseteq \ker_{\Mod,\mathcal M} f$.

Conversely, if $f(b) \succeq \zero$, then we have that $(\zero,b) \in \mathrm{preker}    _{\preceq,\operatorname{cong}
}f$ since $\psi_{\Cong}(\zero,b) = (-)b$. It follows that $\psi_{\Cong}(\ker   _{\preceq,\operatorname{cong}
}f) \supseteq \ker_{\Mod,\mathcal M} f$.

The same argument works for $\mathrm{preker} _{\operatorname{cong}
}f$, since $f(b) \nabla f(b')$   implies $ \zero \preceq
f(b')(-)f(b) = f(b'(-)b).$
\end{proof}
%

\subsubsection{Images} $ $

We have the difficulty that, for any reasonable definition, the
congruence $\preceq$-image of a homomorphism might not contain the
diagonal, so modding it out can destroy well-definedness of
addition.

\begin{defn}\label{definition: wpcg}
    \label{kercokerwithN1}


  \begin{enumerate} \eroman
\item   The  {precongruence image}   of
    a $\preceq$-morphism $
    f:\mathcal M\to \mathcal M'$ is
    \[
    f(\mathcal M)_{\operatorname{pcong}}:=\{(b,b') \in  \mathcal M' \times \mathcal{M}':
    f(c) \succeq b, \ f(c') \succeq b' \text{ for some } c,c' \in
    \mathcal M \}    .
    \]    The \textbf{weak precongruence image} $ f(\mathcal M)_{\operatorname{wpcong} }$ of
  $
    f:\mathcal M\to \mathcal M'$ is  $\Diag_{\mathcal M'} \cup
    f(\mathcal M)_{\operatorname{pcong}
    }.$

The  \textbf{strict precongruence image}   of
    a $\preceq$-morphism $
    f:\mathcal M\to \mathcal M'$ is
    \[
    f(\mathcal M)_{\operatorname{stpcong}}:=\{(b,b') \in  \mathcal M' \times \mathcal{M}':
    f(c) = b, \ f(c') = b' \text{ for some } c,c' \in
    \mathcal M \}\]

\item  Alternatively we can take the \textbf{systemic congruence image}
 $$ f(\mathcal M)_{\operatorname{cong,sys} } =  \Diag_{\mathcal M'} \cup\{ (b,b'): f(c) \succeq
b'(-)b \text{ for some } c\in \mathcal M\}.\footnote{It might be
more natural to use $\preceq $ instead of $\succeq
    $; for example, when $f=0$ we would get $b'(-)b $ a quasi-zero. But then we woulg lose the important
      Proposition \ref{proposition: wpcong is an equivalence}.}
    $$

    %
 \end{enumerate}\end{defn}

\begin{prop}\label{proposition: wpcong is an equivalence}  \begin{enumerate} \eroman
\item
Let $f: \mathcal M \to \mathcal M'$ be a $\preceq$-morphism of
systemic modules. Then the weak precongruence image $f(\mathcal
M)_{\operatorname{wpcong} }$ is an equivalence relation on $\mathcal
M'$.

\item  Let $f: \mathcal M \to \mathcal M'$ be a homomorphism of
systemic modules.  The systemic congruence image $f(\mathcal
M)_{\operatorname{cong,sys}}$ is a congruence  on $\mathcal M'$.

 \end{enumerate}\end{prop}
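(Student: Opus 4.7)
My plan is to deduce (i) almost tautologically from the shape of the definition, and concentrate on (ii), whose transitivity is where the negation map does nontrivial work. In both cases reflexivity is automatic from the explicit union with $\Diag_{\mathcal M'}$, so what needs checking is symmetry, transitivity, and (for (ii)) compatibility with the $\mathcal A$-module operations.

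For (i), the key observation is that $(b,b') \in f(\mathcal M)_{\operatorname{pcong}}$ is equivalent to saying that each of $b$ and $b'$ individually lies in the $\succeq$-module image $f(\mathcal M)_\succeq$; that is, $f(\mathcal M)_{\operatorname{pcong}} = f(\mathcal M)_\succeq \times f(\mathcal M)_\succeq$. This is the full relation on the subset $f(\mathcal M)_\succeq$, hence trivially an equivalence relation there, and unioning with $\Diag_{\mathcal M'}$ adds only reflexive pairs and therefore preserves symmetry and transitivity (verified across the four case-combinations of which summand each pair lies in). No hypothesis on $f$ beyond being a $\preceq$-morphism is needed.

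For (ii), I would use that $f$ is a homomorphism, so $f(c_1+c_2)=f(c_1)+f(c_2)$, $f(ac)=af(c)$ for $a\in\tT_{\mathcal A}$, and $f((-)c)=(-)f(c)$. Symmetry of $f(\mathcal M)_{\operatorname{cong,sys}}$ comes from Definition~\ref{precedeq07}(ii): if $f(c) \succeq b'(-)b$, then $f((-)c)=(-)f(c) \succeq b(-)b'$. The heart of the argument is transitivity. Given $f(c_1) \succeq b'(-)b$ and $f(c_2) \succeq b''(-)b'$, adding via Definition~\ref{precedeq07}(iii) yields
\[
f(c_1+c_2) \;\succeq\; (b'(-)b)+(b''(-)b') \;=\; (b''(-)b)+(b')^\circ,
\]
and since $\zero \preceq (b')^\circ$, Lemma~\ref{circget7} gives $(b''(-)b)+(b')^\circ \succeq b''(-)b$; transitivity of $\succeq$ then places $(b,b'')$ in the image. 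Pairs that fall in $\Diag_{\mathcal M'}$ are handled using the witness $c=\zero$, noting $f(\zero)=\zero$.

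The main obstacle, and the step I expect to be structurally forced, is exactly this absorption of the quasi-zero $(b')^\circ$ during transitivity: it is what the footnote to Definition~\ref{definition: wpcg}(ii) is warning about, and it is why the definition must be stated with $\succeq$ rather than $\preceq$ (in the $\preceq$ version the quasi-zero would land on the wrong side of the relation and transitivity would fail). Once transitivity is in hand, closure of $f(\mathcal M)_{\operatorname{cong,sys}}$ under addition, negation, and $\tT_{\mathcal A}$-action is routine: sum the defining $\succeq$ inequalities for closure under addition, invoke Definition~\ref{precedeq07}(iv) together with $f(ac)=af(c)$ for scalar multiplication, and reuse the symmetry calculation for closure under $(-)$.
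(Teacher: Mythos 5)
Your proof is correct and follows essentially the same route as the paper's. For (i), your observation that $f(\mathcal M)_{\operatorname{pcong}} = f(\mathcal M)_\succeq \times f(\mathcal M)_\succeq$ — the full relation on the subset $f(\mathcal M)_\succeq$ — is a tidier way to state what the paper verifies by hand; for (ii), your transitivity calculation $f(c_1+c_2) \succeq (b'(-)b)+(b''(-)b') = (b''(-)b)+(b')^\circ \succeq b''(-)b$ is precisely the paper's argument (and tracks the sign convention of Definition~\ref{definition: wpcg}(ii) more faithfully than the paper's displayed line, which has a harmless transposition).

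One peripheral remark in your writeup is wrong and should be deleted: you say ``Pairs that fall in $\Diag_{\mathcal M'}$ are handled using the witness $c=\zero$, noting $f(\zero)=\zero$.'' That witness does not work: it would require $f(\zero)=\zero \succeq b^\circ$, which is the wrong direction — by Definition~\ref{precedeq07}(i) one has $\zero \preceq b^\circ$, not $\zero \succeq b^\circ$. This is exactly why $\Diag_{\mathcal M'}$ must be adjoined explicitly in Definition~\ref{definition: wpcg}(ii); the diagonal pairs belong to $f(\mathcal M)_{\operatorname{cong,sys}}$ by that explicit union, not via any choice of $c$. In the transitivity check, the diagonal cases are simply trivial: if, say, $b=b'$, then $(b,b'')=(b',b'')$ is already in the relation by hypothesis. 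Since you handle the genuinely non-diagonal case correctly, this lapse does not affect the soundness of the argument, but the stated justification for the diagonal case is false and should be replaced.
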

\begin{proof}
(i) Clearly, $f(\mathcal M)_{\operatorname{wpcong} }$ is reflexive.
Suppose that $(a,b) \in f(\mathcal M)_{\operatorname{pcong}}$. In
other words, there exist $c,c' \in \mathcal M$ such that $f(c)
\succeq a$ and $f(c') \succeq b$. In particular, obviously we have
that $(b,a) \in f(\mathcal M)_{\operatorname{pcong}}$, and hence
$(b,a) \in f(\mathcal M)_{\operatorname{wpcong}}$. This shows that
$f(\mathcal M)_{\operatorname{wpcong} }$ is symmetric. Finally,
suppose that $(a,b),(b,c) \in f(\mathcal M)_{\operatorname{wpcong}
}$. If $(a,b) \in \Diag_{\mathcal M'}$ or $(b,c) \in \Diag_{\mathcal
M'}$, then clearly $(a,c) \in f(\mathcal M)_{\operatorname{wpcong}
}$. Hence, we may assume that $(a,b),(b,c) \in f(\mathcal
M)_{\operatorname{pcong} }$. That is, there exist $d, d', e, e' \in
\mathcal M$ such that
\[
f(d) \succeq a, \quad f(d')\succeq b, \quad f(e) \succeq b, \quad f(e') \succeq c.
\]
This implies that $(a,c) \in f(\mathcal M)_{\operatorname{pcong} }$, showing that $f(\mathcal M)_{\operatorname{wpcong} }$ is transitive.

(ii) The only nontrivial verification is transitivity, i.e., if
 $(b,b'), (b',b'') \in  f(\mathcal M)_{\operatorname{cong,sys} }$ then  $(b,b'') \in  f(\mathcal M)_{\operatorname{cong,sys} }.$
 This is obvious if $b=b'$ or $b' =b'',$ so we can write
 $ f(c) \succeq b(-)b'$
 and  $f(c') \succeq b''(-)b',$ implying $$ f(c+c') =
f(c)+f(c') \succeq b(-)b' + b''(-)b' = b'' (-)b + (b'(-)b')^\circ
\succeq b''(-)b.$$\end{proof}

%
%
%

%
%
\begin{rem}\label{congprob}
 \begin{enumerate}\eroman
\item Unfortunately  $\Diag_{\mathcal M'} + f(\mathcal
M)_{\operatorname{pcong}
 }$ is not a congruence, since it is  reflexive, but need not be symmetric nor transitive. Connes
and Consani \cite[before Lemma~3.2]{CC2} take the smallest
congruence containing this, which is difficult to compute and could
even be everything.

Transitivity is a major obstruction to the theory. For instance,
suppose we are given $f(a)+b =b$ and $b+f(c) = f(c).$ Then we have
$$(b,b)+ (f(a), f(c)) = (b +f(a), b+f(c))= (b,f(c)),$$ which means
we are identifying $b$ with $f(c)$ in any ordered group with $c$
large enough. If transitivity holds and for any $b\in \mathcal M'$
there is $c \in \mathcal M$ for which $f(c) \succeq b$ then the
congruence identifies any two elements of $\mathcal M'$ and
degenerates.

 Alternatively one may be tempted to require that $
\Diag_{\mathcal M'} + f(\mathcal M)_{\operatorname{pcong}
 }$ is already a congruence; i.e., if $b \preceq f(c),$ $b' \preceq
 f(c'),$
 $b'' \preceq  f(c''),$ and
 $(b+d,b'+d), (b'+d',b''+d') \in \Cong,$
with $b'+d = b'+d',$ then $(b+d,b''+d') \in \Cong;$ i.e.,
transitivity requires some extra cancelation or convexity property.

Although this approach is used in the literature, it seriously
restricts the congruences we may use.

\item  In many instances, we might
prefer the weak precongruence image   $ f(C)_{\operatorname{wpcong}
}$. We lose addition, but have a congruence of sets on which $\tT$
acts.

\item The systemic congruence image $ f(\mathcal
M)_{\operatorname{cong,sys} }$ is the best behaved theoretically, in
view of Proposition \ref{proposition: wpcong is an equivalence}(ii),
but it could be prone to blow up to all of $\mathcal M'$ in the
tropical case (when $f(\mathcal M)$ has elements that are
arbitrarily large.)
 \end{enumerate}\end{rem}

\subsubsection{Cokernels}$ $

Here are alternatives  to the categorical definition to be given in
\S\ref{homcat}, which seem more promising for specific results.

 \begin{defn}\label{coke}
Let $f:\mathcal M \to \mathcal M'$ be a $\preceq$-morphism of
systemic modules over a semiring system~$\mathcal A$. Given $b \in
\mathcal M',$ we define $[b]_f = \{ b' \in \mathcal M' : f(c)
\preceq b' (-)b$ for some $c \in \mathcal M\}.$
 The
\textbf{cokernel} of $f: \mathcal M \to \mathcal M'$, denoted
$\coker f$, is the set  $
 \mathcal M' /  f(\mathcal M)_{\operatorname{wpcong} },$ i.e.,
 we mod out by the weak precongruence image.

  The
 \textbf{systemic cokernel} of $f: \mathcal M \to \mathcal M'$, denoted
 $\coker f_{\operatorname{sys}}$, is the set $\{[b]_f : b \in \mathcal
 M'\}.$

\end{defn}

$\coker f$ is not a systemic module since it is not closed under
addition, but is a $\tT$-module.  $\coker f_{\operatorname{sys}}$
provides
   a rather explicit definition of cokernel, although it is viewed in the power
set hypermodule of Example~\ref{reve}, a possible drawback.

we can progress further by
 introducing extra conditions, such as the following.

\begin{defn}\label{reve1} The $f$-\textbf{minimality
condition} is that any element of $[b]_f$ has a unique minimal
element ~$  b_{f}$.

In the presence of the $f$-minimality condition we define the
\textbf{minimal}$f$-\textbf{cokernel}  denoted
$\coker_{\operatorname{min}} f$, as $
 \{b_{f}: b\in \mathcal M'\}$.
 \end{defn}

 The $f$-minimality condition is motivated by the following
 observations.

 \begin{rem}  \begin{enumerate}\eroman \item If $b \succeq f(c) \in f(\mathcal M
 )$ then   $ \zero \in [b]_f$ since we could take $b' =\zero,$
 implying
$  b_{f} = \zero $ since $\zero $ is minimal.
  \item If $b $ is tangible and $b \not \succeq f(c)$ and $\mathcal M$ is $(-)$-bipotent then
  $b' = b.$
    \item From (i) and (ii) we obtain the $f$-minimality
condition for $ \mathcal A^{(n)}$ when $ \mathcal A$ is
$(-)$-bipotent.
    \item $  (b+b')_{f} \preceq  b _{f} + {b'}_{f}$ since at worst one
would descend further. 
\end{enumerate}
\end{rem}

\begin{note*} Each method has its advantages and limitations. $\coker
f$ is the closest to the classical version, although it loses the
 property of being a module, and also becomes rather weak in the absence of negation. $\coker f_{\operatorname{sys}}$ is more in line
with the systemic theory, but it is rather strong.
$\coker_{\operatorname{min}} f$ provides workable results in
tropical situations, but is quite restrictive.
\end{note*}
\subsection{Chain complexes and exact sequences of systemic modules}$ $

Now, we introduce the notion of chain complexes and exact chains of
systemic modules. As before, we will consider various versions
involving surpassing relation $\preceq$ to encapsulate as many as
generalized algebraic structures by one unified framework of
systemic modules.

\begin{defn} \label{t-trivialmorphism}$ $
Let
\begin{equation}
\label {exactseq1} \cdots \longrightarrow  \mathcal K \too {k} \mathcal M  \too
{f}  \mathcal N \to \cdots
\end{equation}
be a sequence of $\mathcal A$-modules with $\preceq$-morphisms. \footnote{Even if we use the term $\succeq$-chain and $\succeq$-exact in this definition, all morphisms are assumed to be $\preceq$-morphisms unless otherwise stated.}
\begin{enumerate} \eroman
\item
\eqref{exactseq1} is a \textbf{$\preceq$-chain at $\mathcal M$} if
$ k({\mathcal K})_\preceq \subseteq \ker_{\Mod,\mathcal M} f.$ A sequence is said to be a $\preceq$-chain complex if it is a $\preceq$-chain at each $\mathcal{A}$-module appearing in a sequence.
\item
\eqref{exactseq1} is a \textbf{$\succeq$-chain at $\mathcal M$} if
 $k({\mathcal K})_\succeq \subseteq \ker_{\Mod,\mathcal M} f.$ A sequence is said to be a $\succeq$-chain complex if it is a $\succeq$-chain at each $\mathcal{A}$-module appearing in a sequence.
\item
\eqref{exactseq1} is a \textbf{chain at $\mathcal M$} if
$ k({\mathcal K}) \subseteq \ker_{\Mod,\mathcal M} f.$ A sequence is said to be a chain complex if it is a chain at each $\mathcal{A}$-module appearing in a sequence.

\item
 A sequence \eqref{exactseq1} is \textbf{$\preceq$-exact}  at $ \mathcal M $
 if $k({\mathcal K})_{\preceq}=\ker_{\Mod,\mathcal M} f;$ in other
 words,   for any $b' \in \ker_{\Mod,\mathcal M}$ there is $b \in \mathcal
 K$ such that $ f(b)\preceq b' $.

\item
 A sequence \eqref{exactseq1} is \textbf{$\succeq$-exact}  at $ \mathcal M $
 if $k({\mathcal K})_{\succeq}=\ker_{\Mod,\mathcal M} f;$  in other
 words,   for any $b' \in \ker_{\Mod,\mathcal M}$ there is $b \in \mathcal
 K$ such that $  f(b)\succeq b'$.

\item   A sequence \eqref{exactseq1} is \textbf{exact}  at $ \mathcal M $
 if $k({\mathcal K})=\ker_{\Mod,\mathcal M} f.$

\item   A sequence \eqref{exactseq1} is \textbf{tangibly exact}  at $ \mathcal M $
 if $k({\tT_\mathcal K})=\tT _{\mathcal M}\cap\ker_{\Mod,\mathcal M} f.$

\item
 A $\preceq$-chain complex \begin{equation} \label{exactseq711} \cdots \to  \mathcal M _n\too {d_n}  \mathcal M _{n-1} \too {d_{n-1}}
\mathcal M _{n-2} \to \cdots\end{equation}  is  \textbf{$\preceq$-exact} if
the sequence $$\mathcal M _{n+1}\too {d_{n+1}}  \mathcal M _{n }
\too {d_{n }}  \mathcal M _{n-1}$$
 is  $\preceq$-exact for each~$n$.

 \item
 A chain complex
 \begin{equation} \label{exactseq712} \cdots \to  \mathcal M _n\too {d_n}  \mathcal M _{n-1} \too {d_{n-1}}
 \mathcal M _{n-2} \to \cdots
 \end{equation}  is  \textbf{exact} if
 the sequence $$\mathcal M _{n+1}\too {d_{n+1}}  \mathcal M _{n }
 \too {d_{n }}  \mathcal M _{n-1}$$
 is exact for each~$n$.

%
 \end{enumerate}\end{defn}

The following is a special case of Definition \ref{t-trivialmorphism} with symmetrization.

\begin{defn}\label{definition: n-exact}
With the same notation as in Definition \ref{t-trivialmorphism}, suppose that the sequences \eqref{exactseq1} and \eqref{exactseq711} are chain complexes.
\begin{enumerate}\eroman
\item
The chain \eqref{exactseq1} is \textbf{N-exact}  at $ \mathcal M $
if the  symmetrized sequence \begin{equation}
\label {exactseq17} \cdots \longrightarrow  \widehat{\mathcal K}
\too {\widehat{k}} \widehat{\mathcal M } \too {\widehat{f}}
\widehat{\mathcal N} \to \cdots
\end{equation}
is exact.
\item
The chain complex \eqref{exactseq711} is
{N}-\textbf{exact} if the sequence $$\widehat{\mathcal M _{n+1}}\too
{\widehat{d_{n+1}}} \widehat{\mathcal M _{n }} \too {\widehat{d_{n
}}} \widehat{\mathcal M _{n-1}}$$
is {N}-exact for each~$n$.
\end{enumerate}
\end{defn}

\begin{remark}
One can easily see from Definition \ref{t-trivialmorphism} that a $\preceq$-chain complex
is a chain complex, and exactness implies $\preceq$-exactness since $k(\mathcal{K}) \subseteq k(\mathcal{K})_\succeq \subseteq \ker_{\Mod,\mathcal M} f$.
\end{remark}

Exactness is more limited in the systemic context.\bigskip

\begin{example}[{Compare with \cite[p.~620]{E}}]\label{reso} The chain $${\mathcal A^\circ} \to {\mathcal A }   \too {\left(\begin{matrix}
 x_1
\\ x_2\\
x_3 \end{matrix}\right)} {\mathcal A }^{(3)} \too
{\left(\begin{matrix}
   x_1 ^\circ & x_3 & (-) x_2\\
(-) x_3 & x_2 ^\circ & x_1
\\ x_2 & (-) x_1 & x_3^\circ\end{matrix}\right)}
  {\mathcal A }^{(3)} \too {\left(x_1
\ x_2\ x_3^\circ \right)}{\mathcal A } $$ is a free chain but
exactness fails in the middle.

Indeed, a vector $(a_1,a_2,a_3)$ is in the kernel, i.e., $a_1
x_1^\circ (-)a_2 x_3 + a_3 x_2 \succeq \zero$ for $a_2x_3 ,a_3x_2
\preceq a_1 x_1;$    $a_2 x_2^\circ (-)a_3 x_1 + a_1 x_3 \succeq \zero$
for $a_3 x_1 = a_1x_3 \succ a_2x_2$ and   $a_3 x_3^\circ (-)a_1 x_2 +
a_2 x_1 \succeq \zero$ for $a_2 x_1 = a_1x_2 \succ a_3x_3$, so we take
$x_1 = a_1$ large and $x_2 = a_2,$ $x_3 = a_3$. This vector
$(a_1,a_2,a_3)$ is not in the image.
\end{example}

%
%
%
%
%
%

Let us see how Definition \ref{definition: n-exact} relates to the literature.

 \begin{rem}  $ $
 \begin{enumerate}\eroman

\item
Any chain complex \begin{equation}\label{exactseq720}   \cdots \to  \mathcal M _n\too {d_n}  \mathcal M _{n-1} \too {d_{n-1}}
 \mathcal M _{n-2} \to \cdots\end{equation} thereby induces a chain
 complex
 \begin{equation} \label{exactseq72} \cdots \to  \widehat{\mathcal M _n}\too {\widehat{d_n}}  \widehat{\mathcal M _{n-1}}
  \too {\widehat{d_{n-1}}}
 \widehat{\mathcal M _{n-2}} \to \cdots,\end{equation}
\end{enumerate}
and thus yields a special case of the double arrow chain complex
introduced by Patchkoria \cite{Pat} and Flores~\cite{Flo}. 
\end{rem}





 \section{(h,$\preceq$)-Projective dimension}\label{projreso}$ $

We recall the various definitions of ``projective'' from \cite{JMR}. See \cite{DeP,IKN,Ka3,KN} for comparison.

\begin{defn}\label{precproj} 

Let $\mathcal P: =   (\mathcal P,\tT_{\mathcal P},(-),\preceq)$ be a systemic module.
\begin{enumerate}\eroman
\item
 $\mathcal P$ is \textbf{projective} if for any  onto
homomorphism of  systemic modules $h: \mathcal M \to \mathcal M'$,
every homomorphism $ f: \mathcal P \to \mathcal M'$ \textbf{lifts}
to a homomorphism $\tilde f: \mathcal P \to\mathcal M$, in the sense
that $h\tilde f =  f.$
\item
$\mathcal P$ is  $\preceq$-\textbf{projective} if for any
$\preceq$-onto $\preceq$-morphism  $h: \mathcal M \to \mathcal M',$
every  $\preceq$-morphism $ f: \mathcal P \to \mathcal M'$
$\preceq$-\textbf{lifts} to a $\preceq$-morphism $\tilde f: \mathcal
P \to \mathcal M$, in the sense that $f \preceq h\tilde f  .$
\item
$\mathcal P$ is  $(\preceq,h)$-\textbf{projective} if for any
$\preceq$-onto homomorphism  $h: \mathcal M \to \mathcal M',$ every
$\preceq$-morphism $ f: \mathcal P \to \mathcal M'$
$\preceq$-\textbf{lifts} to a $\preceq$-morphism $\tilde f: \mathcal
P \to \mathcal M$, in the sense that $f \preceq h\tilde f  .$
\item
$\mathcal P$ is   \textbf{h-projective} if for any  $\preceq$-onto
homomorphism  $h: \mathcal M \to \mathcal M',$ every homomorphism $
f: \mathcal P \to \mathcal M'$ $\preceq$-\textbf{lifts} to a
homomorphism $\tilde f: \mathcal P \to \mathcal M$, in the sense
that $f \preceq h\tilde f  .$
 \item
$\mathcal P$ is   \textbf{N-projective} if the symmetrization
$\widehat{\mathcal P}$ is $h$-projective with regard to the
symmetrized system of Definition \ref{symsyst}.
\end{enumerate}
\end{defn}


\subsection{Resolutions}$ $

\begin{defn}\label{projres}
Let $\mathcal M$ be a systemic module over a semiring system
$\mathcal A$.
\begin{enumerate} \eroman
\item (The classical case)
A \textbf{resolution} of $\mathcal M$ is an exact chain complex
\begin{equation} \label{exactseq77}  \cdots \to \mathcal N _n\too {d_n}  \mathcal N _{n-1} \too {d_{n-1}}
\dots \too {d_{2}}
 \mathcal N _{1}\too {d_{1}}
 \mathcal N _{0}  \too {\varepsilon: = d_0} \mathcal M
\end{equation}
 of
homomorphisms, such that  ${\varepsilon} $ is onto.

\item
A $\preceq$-\textbf{resolution} of $\mathcal M$ is an
$\preceq$-exact chain complex
\begin{equation} \label{exactseq771}  \cdots \to \mathcal N _n\too {d_n}  \mathcal N _{n-1} \too {d_{n-1}}
\dots \too {d_{2}}
 \mathcal N _{1}\too {d_{1}}
 \mathcal N _{0}  \too {\varepsilon: = d_0} \mathcal M
\end{equation}
 of
homomorphisms, such that  ${\varepsilon} $ is onto.

\item (The classical case)
A \textbf{projective resolution} of $\mathcal M$ is a resolution
where each $\mathcal N _i$ is projective.
\item
An \textbf{h-projective resolution} of $\mathcal M$ is a
$\preceq$-resolution where each $\mathcal N _i$ is h-projective.
The null-module kernel $\ker_{\Mod,\mathcal P_n} d _n$ is called the
\textbf{$n$-th
 $\preceq$-syzygy}.

 \item
An \textbf{N-projective resolution} of $\mathcal M$ is a
$\preceq$-resolution  of $\widehat{\mathcal M}$ where each $\mathcal
N _i$ is N-projective.

 \item
A \textbf{free resolution} of $\mathcal M$ is an h-projective
resolution, where each $\mathcal N _i$ is free.
\end{enumerate}\end{defn}

 The definitions of h-projective resolution and N-projective
 resolution match, when $\preceq$ is taken in the context of symmetrization.
 But there are other cases of interest, such as in Definition \ref{reve1}. 

 Projective resolutions lead
naturally to the notion of projective dimension.

\begin{defn}\label{projdim7}
Let $\mathcal M$ be a systemic module over a semiring system
$\mathcal A$.
\begin{enumerate} \eroman
\item  The  \textbf{projective  dimension}
$\operatorname{proj.dim}(\mathcal M)$ of $ \mathcal M$
 is the smallest possible $n$ (if it exists) among all  projective resolutions, such that
 the $n$-th syzygy $\ker_{\Mod,\mathcal P_n} {d_n}$ is   projective. We say $\operatorname{proj.dim}(\mathcal M)=  0$ when $\mathcal M$
 is projective.

\item
The  h-\textbf{projective  dimension}
h-$\operatorname{proj.dim}(\mathcal M)$ of $ \mathcal M$
 is the smallest possible $n$ (if it exists) among all $\preceq$-projective resolutions, such that
 the  $n$-th $\preceq$-syzygy $\ker_{\Mod,\mathcal P_n} {d_n}$ is h-projective.
We say h-$\operatorname{proj.dim}(\mathcal M)=  0$ when $\mathcal M$
is h-projective.

 \item
The  N-\textbf{projective  dimension}
N-$\operatorname{proj.dim}(\mathcal M)$ of $ \mathcal M$
 is the smallest possible $n$ (if it exists) among all N-projective resolutions of $\widehat{\mathcal M}$, such that
 the  $n$-th $\preceq$-syzygy $\ker_{\Mod,\widehat{\mathcal P_n}} {d_n}$ is h-projective.
We say N-$\operatorname{proj.dim}(\mathcal M)=  0$ when $\mathcal M$
is N-projective.
\end{enumerate}\end{defn}

Since a free module over a system is projective and  $h$-projective, and $N$-projective
 (see, \cite[\S
4]{JMR}), one can show that the category of $\tT$-modules has enough
projectives, $h$-projectives, and $N$-projectives; any $\tT$-module $\mathcal M$ has a
projective resolution, simply by taking~$\mathcal P _n$ to be the
free $\tT$-module mapping onto $\ker_{\Mod,\mathcal P_n}{d_{n-1}}$.
This raises the question of which projective $\tT$-modules
are necessarily free.

On the other hand, many chains which are resolutions in the classical sense now fail to be resolutions because of failure of
exactness, as in Example~\ref {reso}. 




\begin{example}
    One can easily see that a module over a system $F[\lambda]$  has an infinite projective dimension.
\end{example}

\subsubsection{Schanuel's Lemma revisited}$ $

The $h$-projective  dimension could be less than the projective
dimension; the projective dimension could even be infinite  while
the $h$-projective dimension 
 were
finite. In classical algebra, the projective  dimension does not
depend on the choice of projective resolution in the following
sense. Let $\mathcal M$ be a module over a commutative ring
$\mathcal A$ of the projective dimension $n$. One can truncate any
projective resolution of $\mathcal M$ in the following sense: If
$\mathcal M$ has a projective resolution
\[
0 \too {\textrm{ }}\mathcal P _m\too {d_m}  \mathcal P _{m-1} \too {d_{m-1}} \dots \too {d_{2}}  \mathcal P_1 \too {d_{1}}
\mathcal P _{0} \too {\varepsilon}  \mathcal M,
\]
with $m \geq n$,
then there exists a projective module $\mathcal P'$ such that the
following is a projective resolution of~$\mathcal M$:

\[
0 \too {\textrm{ }}\mathcal P' \too {\textrm{ }}  \mathcal P _{n-1}
\too {d_{n-1}} \dots \too {d_{2}}  \mathcal P_1 \too {d_{1}}
\mathcal P _{0} \too {\varepsilon}  \mathcal M.
\]
This elementary result is called Schanuel's lemma.

We do not yet have the full analog of Schanuel's lemma  for systems
and $\preceq$-projective dimension, but we still can attain some
reasonable results. For example, \cite[\S~5]{JMR} contains various
versions of Schanuel's Lemma.  Here is a slightly more technical
version, whose essence is in \cite[Theorem 2.8.26]{Row06}.

 Let $\mathcal P   \overset {f }
\longrightarrow \mathcal M$ and $\mathcal P'  \overset {f'}
\longrightarrow \mathcal M'$ be $\preceq$-onto homomorphisms, where
$\mathcal P$ is $h$-projective  and $\mathcal P'$ is $(\preceq,h)$-projective.
Suppose that we have a given $\preceq$-onto homomorphism
$\mu:\mathcal M\to \mathcal M'$. By the definition of
$h$-projective, the $\preceq$-onto homomorphism $\mu f$
lifts to a homomorphism $\tilde{\mu}: \mathcal P \to \mathcal
P'$ such that
\[
\mu f \preceq f'\tilde{\mu}.
\]
We let $\mathcal K = \ker_{\Mod,\mathcal P}f$ and  $\mathcal K' =
\ker_{\Mod,\mathcal P'}f'.$

We refer the readers to \cite[\S 3]{JMR} for the definition and
basic properties of $\preceq$-splitting, a key systemic notion. The
following theorem concerns $\preceq$-splitting of homomorphisms.


\begin{thm}[Semi-Schanuel, homomorphic-version]\label{Sch}
With the same notation as above, there is a $\preceq$-onto
(Definition~\ref{ont}) $\preceq$-splitting homomorphism $g: \mathcal
K' \oplus \mathcal P \to \mathcal P'$, with a $\preceq$-isomorphism
$\Phi: \mathcal K \to
\mathcal K'' $, where $\mathcal K'' = \{ (b',b) \in
\ker_{\Mod,\mathcal K' \oplus \mathcal P}g: b' \preceq \tilde
\mu(b)\}$.
\end{thm}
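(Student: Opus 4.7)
The plan is to mimic the classical pullback proof of Schanuel's Lemma, with the surpassing relation $\preceq$ taking the place of equality, and to use the two projectivity hypotheses symmetrically: $(\preceq,h)$-projectivity of $\mathcal P'$ to build a $\preceq$-section of $g$, and the already-supplied lift $\tilde\mu$ (coming from $h$-projectivity of $\mathcal P$) to build $\Phi$.

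First, I would define $g: \mathcal K' \oplus \mathcal P \to \mathcal P'$ by $g(b',b) := b' + \tilde\mu(b)$, which is plainly a homomorphism. To construct a section, apply $(\preceq,h)$-projectivity of $\mathcal P'$ to the $\preceq$-onto homomorphism $\mu f: \mathcal P \to \mathcal M'$ and to $f': \mathcal P' \to \mathcal M'$, obtaining a $\preceq$-morphism $\lambda: \mathcal P' \to \mathcal P$ with $f' \preceq \mu f \lambda$. Chaining this with the given $\mu f \preceq f'\tilde\mu$ yields $f'(c') \preceq f'\tilde\mu\lambda(c')$ for each $c' \in \mathcal P'$, so Proposition~\ref{nab} gives $f'(\tilde\mu\lambda(c')(-)c') \succeq \zero$, placing $c' (-) \tilde\mu\lambda(c')$ in $\mathcal K'$. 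Thus
\[
    s(c') \;:=\; \bigl( c' (-) \tilde\mu\lambda(c'),\; \lambda(c')\bigr)
\]
is a well-defined $\preceq$-morphism $\mathcal P' \to \mathcal K' \oplus \mathcal P$, and a direct computation gives $g(s(c')) = c' + \tilde\mu\lambda(c')^\circ \succeq c'$ by Lemma~\ref{circget7}. This delivers both the $\preceq$-splitting of $g$ and its $\preceq$-ontoness.

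Second, I would define $\Phi: \mathcal K \to \mathcal K''$ by $\Phi(b) := ((-)\tilde\mu(b),\, b)$. For $b \in \mathcal K$, from $f(b) \succeq \zero$ one gets $\mu f(b) \succeq \zero$ and then $f'\tilde\mu(b) \succeq \mu f(b) \succeq \zero$, so $(-)\tilde\mu(b) \in \mathcal K'$. Since $g(\Phi(b)) = \tilde\mu(b)^\circ \succeq \zero$, the pair $\Phi(b)$ lies in $\ker_{\Mod,\mathcal K' \oplus \mathcal P} g$; the extra requirement $(-)\tilde\mu(b) \preceq \tilde\mu(b)$ needed to land in $\mathcal K''$ reduces, in the $(-)$-bipotent or symmetrized settings of Example~\ref{precmain}, to a property of the quasi-zero $\tilde\mu(b)^\circ$. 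For $\preceq$-monicity, reading second coordinates componentwise gives $b_0 \preceq b_1$ from $\Phi(b_0) \preceq \Phi(b_1)$. For $\preceq$-ontoness, given $(b',b) \in \mathcal K''$, the combined conditions $b' + \tilde\mu(b) \succeq \zero$ and $b' \preceq \tilde\mu(b)$ propagate through $f'$ and $\mu f \preceq f'\tilde\mu$ to yield $f(b) \succeq \zero$, so $b \in \mathcal K$ and $\Phi(b) \preceq (b',b)$.

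The main obstacle I anticipate is the verification at both interfaces with $\mathcal K''$: showing that $\Phi(b) \in \mathcal K''$ (i.e.,~that $(-)\tilde\mu(b) \preceq \tilde\mu(b)$ holds) and inverting the defining conditions of $\mathcal K''$ to recover membership in $\mathcal K$ both depend on a careful interplay between the negation map, the quasi-zero operator $(\cdot)^\circ$, and the particular surpassing relation. The definition of $\mathcal K''$ with its built-in $\preceq$-condition seems engineered precisely to sidestep the failure of a direct kernel isomorphism in the systemic setting, and is what forces the partial (rather than full) analog of the classical result.
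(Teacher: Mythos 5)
Your overall strategy (pullback-style construction, explicit section via the $(\preceq,h)$-projectivity of $\mathcal P'$, then an isomorphism built from $\tilde\mu$) is the right shape, and your explicit construction of the $\preceq$-splitting $s$ via $\lambda$ is actually more self-contained than the paper's, which simply cites \cite[Proposition~4.4]{JMR}. But there is a genuine sign error in $g$ that propagates and breaks the second half of the argument. You take $g(b',b) := b' + \tilde\mu(b)$, whereas the paper takes $g(b',b) := \tilde\mu(b)(-)b'$. These are \emph{not} interchangeable here, because the target set $\mathcal K''$ is defined by the condition $b' \preceq \tilde\mu(b)$, which is tailored to the latter choice: by Proposition~\ref{nab}, $b'\preceq\tilde\mu(b)$ gives $\tilde\mu(b)(-)b'\succeq\zero$, so $\mathcal K''$ sits naturally inside $\ker_{\Mod}\,g$ for the paper's $g$, but not for yours.

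The damage shows up exactly where you flag ``the main obstacle.'' With your $g$, the forced choice $\Phi(b)=((-)\tilde\mu(b),b)$ needs $(-)\tilde\mu(b)\preceq\tilde\mu(b)$ to land in $\mathcal K''$, and you concede this only holds in special $(-)$-bipotent or symmetrized settings --- but the theorem is stated without those hypotheses, so this is not an incidental verification to be deferred, it is a genuine failure of your $\Phi$. With the correct $g(b',b)=\tilde\mu(b)(-)b'$, the map is $\Phi(b)=(\tilde\mu(b),b)$, for which membership in $\mathcal K''$ is immediate from reflexivity of $\preceq$ (the first coordinate \emph{equals} $\tilde\mu$ of the second), and no side condition is needed. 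You would also need to flip the sign in $s$, i.e.\ take $s(c')=(\tilde\mu\lambda(c')(-)c',\lambda(c'))$, and then check $g(s(c'))=\tilde\mu\lambda(c')^\circ+c'\succeq c'$ exactly as you do. Finally, note that the $\preceq$-ontoness claim ``$\Phi(b)\preceq(b',b)$'' at the end is the wrong inequality for Definition~\ref{ont}(iii), which requires $\one_{\mathcal K''}\preceq\Phi\Phi'$, i.e.\ $\Phi(b)\succeq(b',b)$; with the corrected $\Phi(b)=(\tilde\mu(b),b)$ and the defining condition $b'\preceq\tilde\mu(b)$ of $\mathcal K''$, this $\succeq$ is exactly what falls out.
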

\begin{proof}

Define the   map
\begin{equation}
g: \mathcal K' \oplus \mathcal P \to \mathcal P', \quad g(b',b) = \tilde \mu(b)(-)b'.
\end{equation}
We first claim that $g$ is a homomorphism which is $\preceq$-onto.
In fact, clearly $g$ is a homomorphism since $\tilde{\mu}$ is a
homomorphism. We first claim that for any $b' \in \mathcal P'$, there
exists $b \in \mathcal P$ such that
\begin{equation}
f'(b') \preceq \mu f(b).
\end{equation}
Indeed, let $c=f'(b')$. Since $\mu$ is a $\preceq$-onto homomorphism, there exists
$x \in \mathcal M$ such that $c \preceq \mu(x)$. Moreover, since $f$
is $\preceq$-onto, we have $b \in \mathcal P$ such that $x \preceq
f(b)$; in particular,
\[
f'(b')=c \preceq \mu(x) \preceq  \mu f (b).
\]
Since $f'$ is a homomorphism,
\[
f'(\tilde \mu(b) (-)b') = f'(\tilde \mu(b)) (-) f'(b')\succeq \mu
f(b) (-) f'(b') \succeq \zero.
\]
Therefore $ \tilde \mu(b) (-)b'\in \mathcal K'.
$
Furthermore,
$
g(\tilde \mu(b)(-)b',b) = \tilde \mu(b)(-)(\tilde \mu(b)(-)b')
\succeq b',
$
implying $g$ is $\preceq$-onto (since $b'$ was taken arbitrarily).
Since $\mathcal P'$ is $(\preceq,h)$-projective, $g$ $\preceq$-splits
(by \cite[Proposition 4.4]{JMR}).

For the last assertion 
 take the map
\[
\Phi:\mathcal K \to \ker_{\Mod,\mathcal K' \oplus
    \mathcal P }g, \quad b \mapsto ( \tilde \mu(b),b).
\]
 $\Phi$ is well-defined since $b \in \mathcal
K$ implies that
\[
f'\tilde \mu (b) \succeq \mu f(b) \succeq \mu  (\zero) = \zero,
\]
showing that $\tilde \mu(b) \in \mathcal K'$. Also, we have
\[
g(\tilde \mu(b),b)=\tilde \mu(b) (-) \tilde \mu(b) \succeq \zero,
\]
showing that $(\tilde \mu(b),b) \in \ker_{\Mod,\mathcal K' \oplus
    \mathcal P }g$. The retract $\Phi'$ of $\Phi$ is given by
    projection onto the second coordinate, i.e.,
    $(b',b)\mapsto b$ when $b \preceq \tilde \mu (b').$ Then $\Phi' \Phi(b)
    =b$ whereas,  when $b '\preceq \tilde \mu (b),$  $\Phi
    \Phi' (b',b) = \Phi(b) =  (\tilde \mu (b),b)
    \succeq (b',b),$ implying $\Phi
    \Phi' \succeq \one_{\mathcal K''}.$

%
%
%
\end{proof}

\section{Homological theory of systems}\label{homth}

Homology in general should be defined using congruences, not
modules, cf.~Connes and Consani \cite{CC,CC2}. We take a similar
approach, but often bypassing congruences in favor of precongruences
in order to broaden the range of applications.

\subsection{Homology of precongruences}$ $

Consider the following sequence of systemic modules.
\begin{equation}\label{eq: sequence of modules}
 \dots \rightarrow \mathcal M_{n+1} \xrightarrow{d_{n+1}} \mathcal M_{n} \xrightarrow{d_{n}} \mathcal M_{n-1} \xrightarrow{d_{n-1}} \cdots
\end{equation}

We assume that \eqref{eq: sequence of modules} is a $\succeq$-chain complex as in Definition \ref{t-trivialmorphism},
that is, $d_n$ is a $\preceq$-morphism and $d_{n+1}(\mathcal{M}_{n+1})_\succeq \subseteq \ker_{\Mod,\mathcal M_n}{d_n}$ for each $n$.

There are four versions of homology, based on the three versions of
image in Remark~\ref{congprob} (Definition \ref{definition: homology
via wpcon}) and a direct generalization from the classical
definition (Definition \ref{definition: strict homology}).   We use
the weak precongruence image in Definition \ref{definition: wpcg} to
define homology, which is going be a set with $\tT$-action.

To be precise, for each $n \in \mathbb{N}$, we have
\[
d_{n+1}(\mathcal M_{n+1})_{\operatorname{wpcong} }:= \Diag_{\mathcal M_n} \cup
d_{n+1}(\mathcal M_{n+1})_{\operatorname{pcong}},
\]
where
\[
d_{n+1}(\mathcal M_{n+1})_{\operatorname{pcong}}=\{(b,b') \in
\widehat{\mathcal{M}}_n: d_{n+1}(c) \succeq b, \ d_{n+1}(c') \succeq
b', \textrm{ for some  }c,c'  \in \mathcal{M}_{n+1}\}.
\]

From Proposition \ref{proposition: wpcong is an equivalence}(i),
$d_{n+1}(\mathcal M_{n+1})_{\operatorname{wpcong} }$ is an
equivalence relation on $\mathcal{M}_n$.

\begin{defn} \label{definition: homology via wpcon}
Letting $C$ be a $\succeq$-chain complex \eqref{eq: sequence of
modules}, we define the following. $Z_n(C)  = \ker_{\Mod,\mathcal
M_n}{d_n}.$
\begin{enumerate}\eroman

\item
$H_n(C)_{\operatorname{sys}}=Z_n(C) /{d_{n+1}(\mathcal
M_{n+1})}_{\operatorname{cong,sys} }$.

\item $H_n(C)=Z_n(C)
/d_{n+1}(\mathcal M_{n+1})_{\operatorname{wpcong}}$. In other words,
$H_n(C)$ is the set of equivalence classes of elements in $Z_n(C)$
under the equivalence relation $d_{n+1}(\mathcal
M_{n+1})_{\operatorname{wpcong}}$.


\item (in line with \cite{CC2})
$H_n(C)_{\operatorname{CC}}=Z_n(C) /{d_{n+1}(\mathcal
M_{n+1})}_\Cong,$ where $\Cong$ is the smallest congruence
containing $\Diag_{\mathcal M'}+{d_{n+1}(\mathcal
M_{n+1})}_{\operatorname{pcong}}$.
\end{enumerate}
\end{defn}

Here is our last version directly generalizing the classical definition.

\begin{definition}\label{definition: strict homology}
Suppose that a sequence of systemic modules \eqref{eq: sequence of
modules} is a $\preceq$-chain complex, and let this be $C$. We define the following. $Z_n(C)  = \ker_{\Mod,\mathcal
        M_n}{d_n}$, and
\[
H_n(C)_{\operatorname{str}}=Z_n(C) /\langle {d_{n+1}(\mathcal M_{n+1})} \rangle,
\]
where $\langle {d_{n+1}(\mathcal M_{n+1})} \rangle$ is the smallest congruence in $\mathcal{M}_{n}$ generated by $d_{n+1}(\mathcal M_{n+1})$.
\end{definition}

If the context is clear, we write $H_n$ for $H_n(C)$, ${H_n}_{,
\operatorname{sys}}$ for ${H_n}(C)_{\operatorname{sys}}$, ${H_n}_{,
\operatorname{str}}$ for ${H_n}(C)_{\operatorname{str}}$, and $Z_n$
for $Z_n(C)$. We say that the homology is \textbf{trivial} if it
consists of a single element.

\begin{lemma}\label{tangind}
With the same notation as above, an action of $\tT$ on $\mathcal{M}_n$ induces an action on $H_n$.
\end{lemma}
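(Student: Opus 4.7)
The plan is to show two things: first, that $Z_n = \ker_{\Mod,\mathcal{M}_n} d_n$ is stable under the $\tT$-action inherited from $\mathcal{M}_n$; and second, that the equivalence relation $d_{n+1}(\mathcal{M}_{n+1})_{\operatorname{wpcong}}$ is compatible with this action, so that $\tT$ descends to a well-defined action on equivalence classes.

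For the first step, recall that by Definition~\ref{modker}(ii), $\ker_{\Mod,\mathcal{M}_n} d_n$ is the sum of all $d_n$-null $\preceq$-submodules of $\mathcal{M}_n$, and any $\preceq$-submodule is by construction closed under scalar multiplication by $\tT$ (cf.~Definition~\ref{modu218}). Hence $t \cdot Z_n \subseteq Z_n$ for each $t \in \tT$, and the $\tT$-action is inherited.

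For the second step, let $t \in \tT$ and $(b,b') \in d_{n+1}(\mathcal{M}_{n+1})_{\operatorname{wpcong}}$. I split into the two cases built into the definition. If $(b,b') \in \Diag_{\mathcal{M}_n}$ then $b = b'$, whence $tb = tb'$ and $(tb,tb') \in \Diag_{\mathcal{M}_n}$ trivially. Otherwise $(b,b') \in d_{n+1}(\mathcal{M}_{n+1})_{\operatorname{pcong}}$, so there exist $c,c' \in \mathcal{M}_{n+1}$ with $d_{n+1}(c) \succeq b$ and $d_{n+1}(c') \succeq b'$. Since $d_{n+1}$ is a $\preceq$-morphism it preserves scalar multiplication by $\tT$ (Definition~\ref{ordmp}(i)(b)), and by Definition~\ref{precedeq07}(iv) the surpassing relation is compatible with $\tT$-multiplication, so $d_{n+1}(tc) = t\,d_{n+1}(c) \succeq tb$ and similarly $d_{n+1}(tc') \succeq tb'$. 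Thus $(tb,tb') \in d_{n+1}(\mathcal{M}_{n+1})_{\operatorname{pcong}}$, confirming compatibility.

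The only potential obstacle is bookkeeping: making sure that the diagonal piece adjoined in the definition of $\operatorname{wpcong}$ does not introduce an obstruction, but since $\Diag_{\mathcal{M}_n}$ is obviously $\tT$-stable this is immediate. Combining the two steps, the map $[b] \mapsto [tb]$ on $H_n = Z_n / d_{n+1}(\mathcal{M}_{n+1})_{\operatorname{wpcong}}$ is well-defined, and the monoid axioms for the $\tT$-action on $H_n$ follow at once from those on $\mathcal{M}_n$ because representatives can be chosen consistently.
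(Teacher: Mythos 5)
Your proof is correct and follows essentially the same strategy as the paper's one-line argument: define $a\cdot[b] := [ab]$ and verify well-definedness via $\tT$-equivariance of $d_{n+1}$. You simply make explicit what the paper leaves implicit — stability of $Z_n$ under scalars and compatibility of the $\operatorname{wpcong}$ relation (including the diagonal piece) with the $\tT$-action.
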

\begin{proof}%
The induced action is defined as $a\cdot [b]:=[ab]$ since
$d_{n+1}(ab') = a\cdot d_{n+1}(b')$.
\end{proof}

\begin{prop}\label{exat_is_trivial_Hn_sys}
Suppose that \eqref{eq: sequence of modules} is a $\succeq$-chain complex $C$, where $d_n$ is a homomorphism. The following are equivalent:
    \begin{enumerate}\eroman
        \item
        $C$ is $\succeq$-exact at $\mathcal{M}_n$
        in the sense of Definition \ref{t-trivialmorphism},
        \item ${H_n}_{, \operatorname{sys}}$ is trivial.\end{enumerate}
\end{prop}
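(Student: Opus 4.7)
The plan is to prove both directions by unpacking the definition of the systemic congruence image $d_{n+1}(\mathcal{M}_{n+1})_{\operatorname{cong,sys}}$ and using that the negation map and addition preserve $\succeq$ (Definition \ref{precedeq07}(ii)--(iii)). The bridge between the two sides is the observation that under the homomorphism hypothesis on $d_n$, the set $Z_n(C)=\ker_{\Mod,\mathcal{M}_n}d_n$ is closed under forming $b'(-)b$: indeed $d_n(b), d_n(b')\succeq \zero$ implies $d_n(b'(-)b) = d_n(b')(-)d_n(b) \succeq \zero+\zero = \zero$. Also $\zero\in Z_n(C)$.

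For the direction (ii) $\Rightarrow$ (i), I would argue as follows. If $H_n(C)_{\operatorname{sys}}$ is trivial, then since $\zero\in Z_n(C)$, every $b\in Z_n(C)$ satisfies $(b,\zero)\in d_{n+1}(\mathcal{M}_{n+1})_{\operatorname{cong,sys}}$. Either $b=\zero\in d_{n+1}(\mathcal{M}_{n+1})_\succeq$ (trivial case via the diagonal), or the definition supplies $c\in\mathcal{M}_{n+1}$ with $d_{n+1}(c)\succeq \zero(-)b=(-)b$. Applying the negation map to both sides (it preserves $\succeq$ by Definition \ref{precedeq07}(ii)) gives $d_{n+1}((-)c)=(-)d_{n+1}(c)\succeq b$, so $b\in d_{n+1}(\mathcal{M}_{n+1})_\succeq$. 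Thus $Z_n(C)\subseteq d_{n+1}(\mathcal{M}_{n+1})_\succeq$, and combined with the $\succeq$-chain complex hypothesis $d_{n+1}(\mathcal{M}_{n+1})_\succeq \subseteq Z_n(C)$, we obtain equality, which is $\succeq$-exactness at $\mathcal{M}_n$.

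For (i) $\Rightarrow$ (ii), given any $b,b'\in Z_n(C)$ the closure remark above shows $b'(-)b\in Z_n(C)$. By $\succeq$-exactness there is $c\in\mathcal{M}_{n+1}$ with $d_{n+1}(c)\succeq b'(-)b$, which is exactly the condition $(b,b')\in d_{n+1}(\mathcal{M}_{n+1})_{\operatorname{cong,sys}}$. Hence any two elements of $Z_n(C)$ lie in the same equivalence class, so the quotient $H_n(C)_{\operatorname{sys}}$ consists of a single element, i.e., it is trivial.

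No step presents a genuine obstacle; the only subtlety to flag is that the two directions use $\succeq$ in dual ways — the forward direction extracts $c$ from exactness applied to $b'(-)b\in Z_n(C)$, while the reverse direction extracts $c$ from the systemic congruence condition with $b'=\zero$ and then flips signs via $(-)$. Both manipulations are legitimate precisely because of the compatibilities of $\succeq$ with $(-)$ and $+$ codified in Definition \ref{precedeq07}, which is why the hypothesis that $d_n$ is a homomorphism (used only in showing $Z_n(C)$ is closed under $b'(-)b$) and the chain complex property appear as the essential inputs.
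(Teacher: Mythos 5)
Your proof is correct. Both directions are sound, and the structure is essentially the same as the paper's, with one minor but genuine variation in the $(\mathrm{i})\Rightarrow(\mathrm{ii})$ direction. The paper, given $b,b'\in Z_n$, first applies $\succeq$-exactness twice to get $c,c'\in\mathcal{M}_{n+1}$ with $d_{n+1}(c)\succeq b$ and $d_{n+1}(c')\succeq b'$, and then uses that $d_{n+1}$ is a homomorphism to combine: $d_{n+1}(c(-)c')=d_{n+1}(c)(-)d_{n+1}(c')\succeq b(-)b'$, which exhibits $(b,b')$ in the systemic congruence image. You instead first use that $d_n$ is a homomorphism to show $b'(-)b\in Z_n$, then apply $\succeq$-exactness \emph{once} to produce a single $c$ with $d_{n+1}(c)\succeq b'(-)b$, which is the membership condition directly. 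Your route pushes the $(-)$-manipulation to the kernel side (using the hypothesis on $d_n$, literally as stated) rather than the image side (where the paper in effect uses the homomorphism property of $d_{n+1}$, which is also implicitly required for $H_n(C)_{\operatorname{sys}}$ to be well-defined). Both are valid; yours is a hair more economical and more tightly matched to the stated hypothesis. In the $(\mathrm{ii})\Rightarrow(\mathrm{i})$ direction, the only difference is cosmetic: you used the pair $(b,\zero)$ and then negated, while the paper uses $(\zero,x)$, which reads off $d_{n+1}(c)\succeq x(-)\zero=x$ directly; symmetry of the congruence makes these interchangeable, and your explicit handling of the $\Diag$ case is a small but welcome precision the paper leaves tacit.
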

\begin{proof}
 $(i) \Rightarrow (ii)$ Suppose that $C$ is $\succeq$-exact at $\mathcal{M}_n$, that is, $Z_n = d_{n+1}(\mathcal{M}_{n+1})_{\succeq}$. Let
$b,b' \in Z_n$. Then there
exist $c, c' \in \mathcal{M}_{n+1}$ such that $d_{n+1}(c)\succeq b$
and $d_{n+1}(c')\succeq b'$ from the $\succeq$-exactness. Since $d_n$ is a homomorphism, we have
\[
d_{n+1}(c(-)c') = d_{n+1}(c)(-)d_{n+1}(c')\succeq b(-)b',
\]
and hence $[b]=[b']$ in ${H_n}_{, \operatorname{sys}}=Z_n(C)
/{d_{n+1}(\mathcal
    M_{n+1})}_{\operatorname{cong,sys} }$, showing that the equivalence class of $b$ is the same as the equivalence class of $b'$ in ${H_n}_{, \operatorname{sys}}$. Therefore, ${H_n}_{, \operatorname{sys}}$
is trivial.

$(ii) \Rightarrow (i)$ Suppose that ${H_n}_{, \operatorname{sys}}$
is trivial, i.e.  for any $b,b' \in Z_n(C)$, we have $[b]=[b']$ in
${H_n}_{, \operatorname{sys}}$. We claim that $Z_n =
d_{n+1}(\mathcal{M}_{n+1})_{\succeq}$. In fact, we only have to
prove that $Z_n \subseteq d_{n+1}(\mathcal{M}_{n+1})_{\succeq}$. Let
$x \in Z_n$. Since ${H_n}_{, \operatorname{sys}}$ is trivial, we
have that $[x]=[\zero]$ in ${H_n}_{, \operatorname{sys}}$. It
follows that there exists $c \in \mathcal M_{n+1}$ such that
\[
d_{n+1}(c) \succeq x(-) \zero  =x,
\]
showing that $x \in d_{n+1}(\mathcal{M}_{n+1})_{\succeq}$.
\end{proof}

\begin{prop}\label{exat_is_trivial_Hn} The following are equivalent for a $\succeq$-chain complex $C$
of \eqref{eq: sequence of modules}:
 \begin{enumerate}\eroman
\item
$C$ is $\succeq$-exact at $\mathcal{M}_n$
 in the sense of Definition \ref{t-trivialmorphism}, \item $H_n$ is trivial.
\end{enumerate}
\end{prop}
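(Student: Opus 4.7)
The plan is to mirror the proof of Proposition \ref{exat_is_trivial_Hn_sys}, but replace the systemic congruence image by the weak precongruence image $d_{n+1}(\mathcal{M}_{n+1})_{\operatorname{wpcong}}$. The key simplification, relative to the previous proposition, is that by Proposition \ref{proposition: wpcong is an equivalence}(i), $d_{n+1}(\mathcal{M}_{n+1})_{\operatorname{wpcong}}$ is already an equivalence relation on $\mathcal{M}_n$. So $H_n = Z_n(C)/d_{n+1}(\mathcal{M}_{n+1})_{\operatorname{wpcong}}$ is literally the set of its equivalence classes, and triviality of $H_n$ just says that every element of $Z_n$ is related to $\zero$ via the wpcong relation.

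For $(i)\Rightarrow(ii)$, I would assume $\succeq$-exactness at $\mathcal{M}_n$, i.e.\ $Z_n = d_{n+1}(\mathcal{M}_{n+1})_\succeq$. Then for any $b, b' \in Z_n$ there exist $c, c' \in \mathcal{M}_{n+1}$ with $d_{n+1}(c) \succeq b$ and $d_{n+1}(c') \succeq b'$. This is precisely the defining condition for $(b,b') \in d_{n+1}(\mathcal{M}_{n+1})_{\operatorname{pcong}} \subseteq d_{n+1}(\mathcal{M}_{n+1})_{\operatorname{wpcong}}$, so $[b] = [b']$ in $H_n$, and hence $H_n$ is trivial. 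Notably, this half does not need $d_{n+1}$ to be a homomorphism, in contrast with the $H_n(C)_{\operatorname{sys}}$ version where the homomorphism property was used to combine $d_{n+1}(c)(-)d_{n+1}(c') = d_{n+1}(c(-)c')$.

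For $(ii)\Rightarrow(i)$, assume $H_n$ is trivial and fix $x \in Z_n$. The reverse inclusion $d_{n+1}(\mathcal{M}_{n+1})_\succeq \subseteq Z_n$ is built into the hypothesis that $C$ is a $\succeq$-chain complex, so it suffices to show $Z_n \subseteq d_{n+1}(\mathcal{M}_{n+1})_\succeq$. Triviality gives $(x,\zero) \in d_{n+1}(\mathcal{M}_{n+1})_{\operatorname{wpcong}} = \Diag_{\mathcal{M}_n} \cup d_{n+1}(\mathcal{M}_{n+1})_{\operatorname{pcong}}$, and I split into two cases: either $(x,\zero) \in \Diag_{\mathcal{M}_n}$, whence $x = \zero = d_{n+1}(\zero) \succeq \zero$ (using that $\preceq$-morphisms send $\zero$ to $\zero$), so $x \in d_{n+1}(\mathcal{M}_{n+1})_\succeq$; or $(x,\zero) \in d_{n+1}(\mathcal{M}_{n+1})_{\operatorname{pcong}}$, in which case there is $c \in \mathcal{M}_{n+1}$ with $d_{n+1}(c) \succeq x$, again yielding $x \in d_{n+1}(\mathcal{M}_{n+1})_\succeq$.

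There is no real obstacle here; the only delicate point is the case split induced by the definition of wpcong as the union $\Diag_{\mathcal{M}_n} \cup d_{n+1}(\mathcal{M}_{n+1})_{\operatorname{pcong}}$, which is needed to ensure that the trivial equivalence class $[\zero]$ really does force a $\succeq$-preimage, rather than just allowing $x = \zero$. Unlike Proposition \ref{exat_is_trivial_Hn_sys}, this proposition does not require the hypothesis that $d_n$ (or $d_{n+1}$) is a homomorphism, which is why the statement can be given in the more general $\succeq$-chain complex setting without that assumption.
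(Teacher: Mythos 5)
Your proof is correct. The $(i)\Rightarrow(ii)$ direction coincides with the paper's argument. For $(ii)\Rightarrow(i)$, however, you take a genuinely different and cleaner route: you exploit that $\zero\in Z_n$ (since $d_n(\zero)=\zero\succeq\zero$), so triviality of $H_n$ directly gives $(x,\zero)\in d_{n+1}(\mathcal M_{n+1})_{\operatorname{wpcong}}$ for any fixed $x\in Z_n$, and you then resolve the case split $\Diag_{\mathcal M_n}$ versus $d_{n+1}(\mathcal M_{n+1})_{\operatorname{pcong}}$ explicitly (in the diagonal case, $x=\zero\in d_{n+1}(\mathcal M_{n+1})_\succeq$ via $d_{n+1}(\zero)=\zero\succeq\zero$). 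The paper's version of this direction instead reasons about an arbitrary diagonal pair $(a,a)$ not in $d_{n+1}(\Diag_{\mathcal M_{n+1}})$, asserts $(a,b)\in d_{n+1}(\mathcal M_{n+1})_{\operatorname{wpcong}}$, and concludes $\Diag_{\mathcal M_n}\subseteq d_{n+1}(\mathcal M_{n+1})_{\operatorname{pcong}}$; it is not transparent from that argument why $a$ lies in $Z_n$ (so that triviality of $H_n$ applies to the pair $(a,b)$), nor how the conclusion about $\Diag_{\mathcal M_n}$ finishes off the inclusion $Z_n\subseteq d_{n+1}(\mathcal M_{n+1})_\succeq$. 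Your argument avoids both of these opacities, at the cost of nothing; it is the more direct proof. Your closing observation is also accurate: unlike Proposition~\ref{exat_is_trivial_Hn_sys}, which uses $d_{n+1}(c)(-)d_{n+1}(c')=d_{n+1}(c(-)c')$ and therefore needs $d_{n+1}$ to be a homomorphism, no homomorphism hypothesis is required here, consistent with the weaker hypotheses of the proposition as stated.
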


\begin{proof} $(i) \Rightarrow (ii)$
 Suppose that $C$ is $\succeq$-exact at
    $\mathcal{M}_n$, that is, $Z_n = d_{n+1}(\mathcal{M}_{n+1})_{\succeq}$. Let
    $b,b' \in d_{n+1}(\mathcal{M}_{n+1})_\succeq$. Then there exist $c, c' \in
    \mathcal{M}_{n+1}$ such that $d_{n+1}(c)\succeq b$ and $d_{n+1}(c') \succeq b'$. Then
    clearly, $(b,b') \in d_{n+1}(\mathcal M_{n+1})_{\operatorname{wpcong}}$, showing that the
    equivalence class of $b$ is same as the equivalence class of $b'$ in $H_n$. Therefore,
    $H_n$ is trivial.

 $(ii) \Rightarrow (i)$ Suppose that $H_n$ is trivial, i.e, $Z_n \times Z_n= d_{n+1}(\mathcal M_{n+1})_{\operatorname{wpcong}}$. From the condition of $\succeq$-chains  we have that $d_{n+1}(\mathcal{M}_{n+1})_\succeq \subseteq Z_n$, so we only have to show the other inclusion. 
    Let $b\in Z_n$, and assume that there exists a pair $(a, a) \in
    \Diag_{\mathcal M_n} \setminus d_{n+1}(\Diag_{\mathcal M_{n+1}})$. Since $d_{n+1}(\mathcal M_{n+1})_{\operatorname{wpcong}}$ is an equivalence relation and $H_n = \{1\}$, the pair $(a, b) \in d_{n+1}(\mathcal M_{n+1})_{\operatorname{wpcong}}$. By assumption $b \neq a$, that is, $(a, b) \not\in \Diag_{\mathcal M_n}$. This means that the pair $(a, b)$ must be an element of $d_{n+1}(\mathcal{M}_{n+1})_{\operatorname{pcong}}$, more precisely, there exists $c$ such that $a \preceq d_{n+1}(c)$. Hence $a \in d_{n+1}(\mathcal{M}_{n+1})_\succeq$, for every $(a,a)$ in  $\Diag_{\mathcal M_n}$. This implies that $\Diag_{\mathcal M_n} \subseteq d_{n+1}(\mathcal M_{n+1})_{\operatorname{pcong}}$ and $d_{n+1}(\mathcal M_{n+1})_{\operatorname{wpcong}} = d_{n+1}(\mathcal M_{n+1})_{\operatorname{pcong}}$.
\end{proof}

Note that in order for Definition~\ref{definition: homology via
wpcon}(i) to make sense we require ${d_{n+1}(\mathcal
M_{n+1})}_{\operatorname{cong,sys} }$ to be a congruence, which
happens when the maps $d_n$ are homomorphisms, as shown in
Proposition~\ref{proposition: wpcong is an equivalence}.

For $H_n(C)_{\operatorname{CC}},$ the analog of Lemma~\ref{tangind}
goes through with the same proof. A statement similar to
Proposition~\ref{exat_is_trivial_Hn_sys} holds in this new setting
as well.

\subsubsection{Specific examples}$ $

Here are some examples of systemic modules defined in terms of chains, and their homologies.

\begin{example}\label{homex}
As one can see in Definitions \ref{definition: homology via wpcon}
and \ref{definition: strict homology}, there are several possible
homologies which one can define. This is due to the generality of
systems. Here are some examples paralleling classical examples.

\begin{enumerate}\eroman
\item
Let $\ell_a$ be the  left multiplication map from $\mathcal M$ to $\mathcal M$. Consider the following $\preceq$-chain complex:
\[
K(a): \zero\hookrightarrow \mathcal M  \too {\ell_a}
\mathcal M
\]
In this case, one obtains
\[
H_1(K(a))_{\operatorname{str}}=\Ann _{\mathcal M}a /\langle {\zero} \rangle = \Ann _{\mathcal M}a.
\]
\item
Generalizing the above, given  $a,a' \in \tT$ we have the commuting diagram
$$\begin{matrix}
 \zero\to
&\mathcal M\too {\ell_a}  &\mathcal M
\\  & \downarrow {\ell_{a'}} & \downarrow {\ell_{a'}}\\
 \zero\to
&\mathcal M\too {\ell_a}  &\mathcal M
\end{matrix},$$
which can be rewritten as the chain complex $$\label{2chain}
K(a,a'): \zero\to \mathcal M  \too {\binom{a'}a} \widehat {\mathcal
M} \too {(-a,a')} \mathcal M.$$  If $a$ is
 null-regular then $H_1(K(a,a'))_{\textrm{str}} = [a:a']/a \mathcal M,$ defined
as $\{ b \in \mathcal M: a'b \succeq a \mathcal M\}$,
  in analogy
 to \cite[p. 425]{E}.


  \item (suggested by Lorscheid) The truncated systemic module $\mathcal A [\la]/\Cong,$ where $$\Cong
  = \{ (f+ \la^{m+1} g, f+\la^{m+1} h): \deg f = m\}.$$ This mods out
  all polynomials of degree $>m$ since we can take $f=g=0$ or
  $f=h=0$. Then $H_{{m+1},str}$ is trivial.

   \item The Grassmann (exterior) semialgebra system of a module $\mathcal M$ (over a commutative semiring) is
    treated (and exploited) in  \cite{GaR} where we formally define $(-)$ on the elements of the tensor
       algebra of degree $\ge 2$ by $(-)(v \otimes v') = v' \otimes
       v$. In particular all elements $v \otimes v$ are
       quasi-zeros (but nonzero), but we can further mod out $v\otimes v$ to $\zero.$ In case $\mathcal M$ is free with base $\{ b_i: i \in I\}$ where $I$ is an ordered set,
       the tangible elements now are spanned by $\{ (\pm) b_{i_1}\dots
       b_{i_m}: i_1 < \dots < i_n \}$.
       This enables us to define, for any $b\in \mathcal M$, the \textbf{Koszul complex}
$$K(b): \mathcal M \to \wedge^2 \mathcal M \to \dots \to \wedge^n \mathcal M \too {d_n} \dots$$
where $d_n (v) = v \wedge b.$ The homology of the Koszul complex
casts light on regular sequences (\S\ref{regseq0}).
  \end{enumerate}
\end{example}

Many classical examples, say from D.~Rogalski \cite{Rog}, can be put
in this systemic context.

 \subsubsection{Regular sequences and homology}\label{regseq0}$ $

Homology ties in with regular sequences, which classically lead in
to Koszul complexes, cf.  \cite[\S17]{E}.

\begin{defn} \label{regseq}
A    \textbf{regular sequence} on a $\preceq$-module $\mathcal M$ is
a subset $S = \{ b_1, \dots, b_t \} \subseteq \mathcal A$ for which $S
\mathcal M \ne \mathcal M$ but $b_j$ is null-regular  for  $S_j
\mathcal M \subseteq \mathcal M$   for all $0 \le j \le t-1$, where
$S_j = \{ b_1, \dots, b_j \} $.
\end{defn}

\begin{lem} (As in \cite[p. 425]{E}) \begin{enumerate}\label{regseq1}\eroman
   \item $\{ \la_1, \dots, \la_t\}$ is a regular sequence in the
   polynomial system $\mathcal A [\la_1, \dots, \la_t]$.
    \item For $t=1$, the sequence $S = \{ b \}$ is regular if and only if the
    $H_{1}(K(b))_{str}$ of the complex $$\zero \to \mathcal A
    \too{b}  \mathcal A$$ is trivial.
     \item For $t=2$, and $a_1$ null-regular,  the sequence $S = \{ a_1, a_2 \}$ is regular
     if and only if the
    the systemic homology of the complex  $$\label{2chain}
K(a_1,a_2): \zero\to \mathcal M  \too {\binom{a_2}a_1} \widehat
{\mathcal M} \too {(-a_1,a_2)} \mathcal M$$ of
Example~\ref{homex}(ii) is trivial.
   \end{enumerate}
\end{lem}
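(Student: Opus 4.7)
The plan is to verify each of the three parts by unwinding the definitions of regular sequence and the respective systemic homology, following the classical template of \cite[Ch.~17]{E} but with careful attention to the surpassing relation $\preceq$, the negation map, and the null-module $\mathcal M_\Null$ taking the place of $\{\zero\}$.

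For part~(i), I would exploit the multi-grading of the polynomial system $\mathcal A[\lambda_1,\dots,\lambda_t]$. For each $0 \le j \le t-1$, since $\lambda_{j+1}$ is tangible we have $\lambda_{j+1} \not\succeq \zero$, so null-regularity of $\lambda_{j+1}$ over $S_j\mathcal A[\lambda] = \langle \lambda_1,\dots,\lambda_j\rangle$ reduces to exhibiting a single $y$ witnessing the failure of the defining premise. Taking $y = \one$ and comparing degrees in $\lambda_{j+1}$, the tangible monomial $\lambda_{j+1}\cdot \one$ cannot surpass any element of $\langle \lambda_1,\dots,\lambda_j\rangle$. The nondegeneracy condition $S\mathcal A[\lambda] \neq \mathcal A[\lambda]$ is immediate, since $\one \notin \langle \lambda_1,\dots,\lambda_t\rangle$.

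For part~(ii), the complex $K(b)$ has $\mathcal M_2 = \zero$, $\mathcal M_1 = \mathcal M_0 = \mathcal A$, and differential $d_1 = \ell_b$, so by Example~\ref{homex}(i) the homology is $H_1(K(b))_{\operatorname{str}} = \Ann_\mathcal A b$. I would then show directly that triviality of $\Ann_\mathcal A b$ (in the sense of consisting of a single class, namely the null class) is equivalent to the statement that $by \succeq \zero$ pushes $y$ into $\mathcal A_\Null$; by contraposition and Lemma~\ref{circget7} this matches the null-regularity premise ``$by \succeq \zero$ for all $y$ forces $b \succeq \zero$.'' The remaining nondegeneracy clause $b\mathcal A \neq \mathcal A$ of regularity corresponds to nontriviality of the complex itself.

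Part~(iii) is where the bulk of the work lies, and is the main obstacle. Unwinding the systemic homology of $K(a_1,a_2)$ at $\widehat{\mathcal M}$, the kernel is $Z_1 = \{(m_0,m_1) \in \widehat{\mathcal M} : a_2 m_1 \nabla a_1 m_0\}$, and the image of $\binom{a_2}{a_1}$ is $\{(a_2 c, a_1 c) : c \in \mathcal M\}$. By Definition~\ref{definition: homology via wpcon}(i) together with Proposition~\ref{exat_is_trivial_Hn_sys}, triviality of $H_1(K(a_1,a_2))_{\operatorname{sys}}$ means every $(m_0,m_1) \in Z_1$ is equivalent to $(\zero,\zero)$ modulo the systemic congruence image, i.e., there exists $c \in \mathcal M$ with $a_2 c \succeq m_0$ and $a_1 c \succeq m_1$. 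The work is then to translate between this lifting property and regularity of $\{a_1,a_2\}$: starting from $a_2 m_1 \nabla a_1 m_0$, so that $a_2 m_1 \in a_1 \mathcal M + \mathcal M_\Null$, invoke null-regularity of $a_2$ over $a_1 \mathcal M$ to write $m_1 \nabla a_1 c$ for some $c$, then use the hypothesized null-regularity of $a_1$ to conclude $m_0 \nabla a_2 c$; the converse implication runs analogously. The technical heart is tracking the $\mathcal M_\Null$-perturbations introduced at each cancellation step and ensuring they are absorbed into the systemic equivalence rather than obstructing the desired lift.
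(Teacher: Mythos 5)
Parts (i) and (ii) are essentially sound and land close to the paper. For (i) the paper argues by induction, observing that modding out $\{\lambda_1,\dots,\lambda_j\}$ leaves the polynomial system $\mathcal A[\lambda_{j+1},\dots,\lambda_t]$, whereas you give a direct degree argument in the multigrading; both are fine, and yours is arguably more explicit. For (ii) the paper's proof is one line --- by Proposition~\ref{exat_is_trivial_Hn} and Example~\ref{homex}(i), both sides of the biconditional assert $\Ann_{\mathcal M}a = \zero$ --- which is the same identification you make.

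Part (iii) is where I see a genuine gap. You correctly identify $Z_1$, the image of $\binom{a_2}{a_1}$, and (via Proposition~\ref{exat_is_trivial_Hn_sys}) what ``trivial systemic homology'' unwinds to, but the step where the actual content lives --- ``invoke null-regularity of $a_2$ over $a_1\mathcal M$ to write $m_1\,\nabla\, a_1 c$ for some $c$, then use null-regularity of $a_1$ to conclude $m_0\,\nabla\, a_2 c$'' --- is stated, not proved, and the closing sentence (``the technical heart is tracking the $\mathcal M_\Null$-perturbations\ldots'') concedes the point rather than settling it. Two concrete issues hide there. First, null-regularity as formulated in Definition~\ref{regseq} is a condition whose \emph{conclusion} is $b\succeq\zero$; to extract ``$m_1\,\nabla\, a_1 c$'' from $a_2 m_1 \in a_1\mathcal M + \mathcal M_\Null$ you are implicitly using a cancellation property (a statement about the annihilated element, not the annihilator), which needs to be justified from the definition as stated. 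Second, passing from $a_2 m_1\,\nabla\, a_1 m_0$ and $m_1\,\nabla\, a_1 c$ to $m_0\,\nabla\, a_2 c$ chains two balance relations, and $\nabla$ is transitive only in special circumstances (Proposition~\ref{nablatrop} needs $(-)$-bipotence of height~2, which is not hypothesized in the lemma). You also don't address the nondegeneracy clause $S\mathcal M\ne\mathcal M$ in the backward direction. By contrast, the paper's own argument for (iii) is a short translation of the fourth paragraph of~\cite[p.~425]{E}: it characterizes the $\preceq$-image of the left map as the pairs surpassing $(ca_2,ca_1)$, matches this against the set $[a_1:a_2]$, and then invokes null-regularity of $a_1$ once to close the loop --- a single cancellation rather than a chase, which sidesteps the transitivity issue. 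To repair your version you would need to either carry out the perturbation-tracking explicitly (which is what the lemma is asking for) or reroute through the $[a_1:a_2]/a_1\mathcal M$ identification of Example~\ref{homex}(ii), as the paper does.
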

\begin{proof} (i) The regularity condition is clear  for $t=1.$  In
general
   modding out $ \{ \la_1, \dots, \la_j \}$ still leaves the
   polynomial system $\mathcal A [\la_{j+1}, \dots, \la_t]$, which satisfies
   the condition by induction.

   (ii) By Proposition~\ref{exat_is_trivial_Hn} and
   Example~\ref{homex}, since each assertion says $\Ann _{\mathcal M}a= \zero.$

(iii) We follow the fourth paragraph of \cite[p. 425]{E}. An element
is in the $\preceq$-image of \eqref{2chain} iff it surpasses $(ca',
ca),$ so $[a:a']$ corresponding to the elements of the image are the
elements surpassing $\mathcal A a.$ The assertion follows when $a_1$
is null-regular.
\end{proof}

\subsubsection{The differential approach}$ $

We can define a specific chain complex from a given  module
$\mathcal M$ with negation, applying symmetrization to the classical
theory given in \cite{C,Kar,Sha}. We define  $(-)^1 b = (-)b$ and
inductively $(-)^k b = (-)(-)^{k-1}b.$

Tensor products over semirings parallel tensor products over rings,
and are well studied in the literature \cite{Ka1,Ka2,Ka3}; the
systemic version is given in \cite[\S~6.4]{Row16}, where it is
stipulated that $(-)(b \otimes b') = ((-)b) \otimes b'$ for all
$b,b'$. Given a commutative semiring system $(\mathcal A, \tT, (-),
\preceq)$ and a systemic module $\mathcal M$ we define $\mathcal
M^{\otimes n} = \mathcal M \otimes \mathcal M^{\otimes {n-1}}.$

 One needs to use homomorphisms for the theory to contain tensor products
in the context of monoidal categories, as illustrated in
\cite[Example 7.7]{JuR1} and even more extreme examples of Gaubert
\cite{Ga}.

\begin{example} Here is the systemic version of \cite{C}, \cite{Sha}. Given a semiring  $\mathcal M$ which
is a  module with negation over a commutative semiring system
$(\mathcal A, \tT, (-), \preceq)$, define ${\mathcal
M}_{\operatorname{ab}}= \{ b b' (-) b'b : b,b' \in \mathcal M\}.$
(${\mathcal M}_{\operatorname{ab}}$ is null when $\mathcal M$ is
commutative.) Define the differential map $d_n: \mathcal M^{\otimes
n} \to \mathcal M^{\otimes n-1}$ by
$$d_n (b_1 \otimes \dots  \otimes b_n ) = b_n b_1 \otimes \dots  \otimes b_{n-1}  + \sum_{k=1}^{n-1} (-)^k b_1 \otimes \dots \otimes b_{k-1}\otimes b_{k}b_{k+1}\otimes b_{k+2}\otimes \dots \otimes b_{n}.$$
Then each term in $d_k d_{k-1}$ appears with its quasi-negative,
implying $d_k d_{k-1}$ is null for each $k$, and we have a chain
$\mathcal M^{\otimes n} \too{d_n} \mathcal M^{\otimes n-1}
\too{d_{n-1}}\dots \too{d_{1}} {\mathcal M}_{\operatorname{ab}}.$
(since $d_{1}(b_1 \otimes b_2) = b_2b_1 (-) b_1 b_2$).

By modding out the congruence spanned by all $(b_1 \otimes \dots
\otimes b_n,   b_n b_1 \otimes \dots \otimes b_{n-1}),$  one obtains
the systemic version of cyclic homology.
 Instead of homomorphisms of tensor products, one
could consider $n$-linear maps, and work in the semigroup algebra of
the cartesian power $\mathcal M^{(n)}$.

The  version $d_n: \widehat{\mathcal M}^{\otimes n} \to
 \widehat{\mathcal M}^{\otimes n-1}$ under symmetrization for an arbitrary semi-algebra over a
semiring is to symmetrize the differential map, i.e., put the
positive parts in the first component and the negative parts in the
second component,  taking $$d_n  \big((b_1,b_1') \otimes \dots
\otimes (b_n,b'_n)\big) = (c,c')$$ where

  \begin{equation}\label{fix1}\begin{aligned}   c= b_n b_1+b_n'b_1'  & \otimes (b_1,b_1') \otimes \dots  \otimes (b_{n-1},b'_{n-1}) \\&  + \sum_{k \text{ even} } (b_1,b_1') \otimes \dots  \otimes (b_{k-1},b'_{k-1})\otimes (b_{k}b_{k+1}+ b'_{k}b'_{k+1})\otimes (b_{k+2},b'_{k+2}) \otimes \dots  \otimes (b_n,b'_n) \\ &
 + \sum_{k \text{odd} } (b_1,b_1') \otimes \dots  \otimes (b_{k-1},b'_{k-1})\otimes b'_{k}b'_{k+1}\otimes  (b_{k+2},b'_{k+2}) \otimes \dots  \otimes (b_n,b'_n),\end{aligned}\end{equation}
\begin{equation}\label{fix2}\begin{aligned} c'=  b_n b'_1+b_n'b_1 &
\otimes (b_1,b_1') \otimes \dots  \otimes (b_{n-1},b'_{n-1}) \\ & +
\sum_{k \text{ odd} } (b_1,b_1') \otimes \dots  \otimes
(b_{k-1},b'_{k-1})\otimes (b_{k}b_{k+1}+ b'_{k}b'_{k+1})\otimes
(b_{k+2},b'_{k+2}) \otimes \dots  \otimes (b_n,b'_n) \\ &
 + \sum_{k \text{even} } (b_1,b_1') \otimes \dots  \otimes (b_{k-1},b'_{k-1})\otimes b'_{k}b'_{k+1}\otimes  (b_{k+2},b'_{k+2})
  \otimes \dots  \otimes (b_n,b'_n) .\end{aligned}\end{equation}\end{example}



\subsection{Chains of systemic modules}\label{homth1}

\begin{lem}\label{inducedmap}
Consider the following commutative diagram of systemic modules over
a semiring system~$\mathcal A$:
\begin{equation}\label{commutativediagram}
\begin{tikzcd}
\mathcal M' \arrow{r}{q}\arrow{d}{f} & \mathcal N'\arrow{d}{g}
\\
\mathcal M\arrow{r}{l} & \mathcal N
\end{tikzcd}
\end{equation}
with $\preceq$-morphisms  $f$, $g$, $q$, and  $l$. The commutative
diagram \eqref{commutativediagram} induces the following natural maps:
\begin{enumerate}\eroman
  \item
A $\preceq$-morphism $\ker_{\Mod,\mathcal M'} f \to \ker_{\Mod,\mathcal N'} g$ given by $x_0 \mapsto  q(x_0)$.
  \item
A $\tT_\mathcal{A}$-equivariant map $[l]: \coker f \to \coker g$ given by $[y] \mapsto [l(y)]$, where $[y]$ is the equivalence class of $y \in \mathcal{M}$ in $\coker f$.
  \item
A precongruence $\preceq$-morphism $\pker _{\operatorname{cong} }f\to \pker _{\operatorname{cong}
  }g$ given by $(x_0,x_1) \mapsto  (q(x_0),q(x_1)).$
 \item
A precongruence $\preceq$-morphism $\pker
_{\preceq,\operatorname{cong} }f\to \pker
_{\preceq,\operatorname{cong}
  }g$ given by $(x_0,x_1) \mapsto  (q(x_0),q(x_1)).$
\end{enumerate}
\end{lem}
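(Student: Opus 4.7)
The four parts all share the same structural skeleton: use the commutativity $gq = lf$ (or its $\preceq$-weakening), combined with the fact that $\preceq$-morphisms are order-preserving (condition (d) in Definition~\ref{ordmp}), send $\zero$ to $\zero$, and respect the negation. Each induced map is the obvious ``apply $q$ pointwise'' or ``apply $l$ pointwise,'' so all that remains is well-definedness and the appropriate morphism property.

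For (i), I would take $x_0 \in \ker_{\Mod,\mathcal M'} f$, so $f(x_0) \succeq \zero$. Then $l(f(x_0)) \succeq l(\zero) = \zero$ since $l$ is an ordered map, and commutativity of \eqref{commutativediagram} gives $g(q(x_0)) = l(f(x_0)) \succeq \zero$, placing $q(x_0)$ in an $g$-null $\preceq$-submodule, hence in $\ker_{\Mod,\mathcal N'} g$. That the restriction of $q$ to $\ker_{\Mod,\mathcal M'} f$ is still a $\preceq$-morphism is automatic. For (ii), one first checks well-definedness on the quotient: given $(y_1,y_2) \in f(\mathcal M')_{\operatorname{pcong}}$, choose $c_i \in \mathcal M'$ with $f(c_i) \succeq y_i$, and apply $l$ to get $l(f(c_i)) \succeq l(y_i)$; by commutativity, $g(q(c_i)) \succeq l(y_i)$, so $(l(y_1),l(y_2)) \in g(\mathcal N')_{\operatorname{pcong}}$ and thus lies in $g(\mathcal N')_{\operatorname{wpcong}}$. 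The diagonal part is trivially preserved, and $\tT_\mathcal{A}$-equivariance follows from $l(ay) = a\, l(y)$.

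For (iii) and (iv), the map $(x_0,x_1)\mapsto (q(x_0),q(x_1))$ corresponds in the sense of Definition~\ref{congmor} to the pair $(f_0,f_1)$ with $f_{00}=f_{11}=q$ and $f_{01}=f_{10}=\zero$, so it is automatically a precongruence $\preceq$-morphism as soon as the target precongruence contains the image. For (iv), order-preservation of $l$ immediately converts $f(x_0)\preceq f(x_1)$ into $l(f(x_0))\preceq l(f(x_1))$, and commutativity rewrites this as $g(q(x_0))\preceq g(q(x_1))$. For (iii), the relevant relation is $f(x_0)\,(-)\,f(x_1)\succeq \zero$; applying the ordered map $l$ gives $l(f(x_0)(-)f(x_1))\succeq \zero$, and the $\preceq$-morphism axiom $l(a+b)\preceq l(a)+l(b)$ combined with $l((-)c)=(-)l(c)$ yields
\[
l(f(x_0))\,(-)\,l(f(x_1)) \;=\; l(f(x_0)) + l((-)f(x_1)) \;\succeq\; l\bigl(f(x_0)(-)f(x_1)\bigr) \;\succeq\; \zero,
\]
which is exactly $g(q(x_0))\,\nabla\,g(q(x_1))$ after applying commutativity.

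The main subtle point, and essentially the only thing to be careful about, is that $q$ and $l$ are $\preceq$-morphisms rather than homomorphisms, so equalities like $l(a+b)=l(a)+l(b)$ are unavailable and must be replaced by the one-sided inequality $l(a+b)\preceq l(a)+l(b)$. I therefore need to check in each part that the surpassing direction coming from this axiom is compatible with the ``$\succeq \zero$'' direction needed to land in a kernel or in a Null-containing set. This works out cleanly for (i)--(iv) as above; the asymmetry would have caused trouble only if the definitions of $\ker_{\Mod}$ or $\pker_{\operatorname{cong}}$ had required equality rather than $\succeq \zero$.
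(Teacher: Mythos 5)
Your proposal is correct and follows essentially the same strategy as the paper's proof: use the commutativity $gq=lf$, order-preservation of the ordered maps, and the $\preceq$-morphism inequality $l(a+b)\preceq l(a)+l(b)$. Parts (i), (ii), and (iv) match the paper's argument almost verbatim, and you additionally treat the diagonal case of the weak precongruence image in (ii) explicitly, which the paper glosses over.

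For part (iii) your chain is actually tighter than the one printed in the paper. The paper passes through $gq(x_0)(-)gq(x_1)\succeq gq(x_0(-)x_1)=lf(x_0(-)x_1)$ and then asserts $lf(x_0(-)x_1)\succeq l(\zero)$; that last step would require $f(x_0(-)x_1)\succeq\zero$, but since $f$ is only a $\preceq$-morphism one has $f(x_0(-)x_1)\preceq f(x_0)(-)f(x_1)$, which runs in the wrong direction relative to the hypothesis $f(x_0)(-)f(x_1)\succeq\zero$. Your version,
\[
l(f(x_0))\,(-)\,l(f(x_1)) \;\succeq\; l\bigl(f(x_0)(-)f(x_1)\bigr) \;\succeq\; l(\zero) \;=\; \zero,
\]
applies $l$'s order-preservation directly to the hypothesis before splitting with the $\preceq$-morphism property of $l$, and this is the correct order of inequalities under the lemma's stated assumption that $f,g,q,l$ are $\preceq$-morphisms rather than homomorphisms.
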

\begin{proof}
(i) Suppose $x_0 \in \ker_{\Mod,\mathcal M'} f.$ By definition,
$f(x_0)\succeq \zero,$ and hence
\[
gq(x_0)=l f(x_0)\succeq  l (\zero) \succeq \zero,
\]
proving $q(x_0) \in \ker_{\Mod,\mathcal N'} g.$ This map is clearly a $\preceq$-morphism.

(ii) We claim that $[l]$ is well-defined. In fact, if $[y]=[y'] \in
\coker f$, then there exist $c,c' \in \mathcal{M}'$ such that $f(c) \succeq y$ and $f(c') \succeq y'$. It follows that
\[
g(q(c))=l(f(c)) \succeq l(y) \textrm{ and } g(q(c'))=l(f(c')) \succeq l(y'),
\]
showing that $[l(y)]=[l(y')]$. This map is clearly $\tT$-equivariant since for any $t \in \tT$, we have
\[
[l](t\cdot[y]) = [l]([ty])=[l(ty)]=[tl(y)]=t\cdot [l(y)].
\]


(iii)  Let $(x_0,x_1) \in \pker _{\operatorname{cong} }f$, that is
$\zero \preceq f(x_0)(-)f(x_1)$. It follows that
\[
gq(x_0) (-) gq(x_1) \succeq gq(x_0(-)x_1) = lf(x_0(-)x_1) \succeq l(\zero) \succeq \zero,
\]
showing that $(q(x_0),q(x_1)) \in \pker _{\operatorname{cong}}g$. This map is clearly a precongruence $\preceq$-morphism.

(iv) Let $(x_0,x_1) \in \pker
_{\preceq,\operatorname{cong} }f$, that is $f(x_0) \preceq f(x_1)$. From the commutative diagram \eqref{commutativediagram}, we have that
\[
g(q(x_0))=l(f(x_0)) \preceq l(f(x_1))=g(q(x_1)),
\]
and hence $(q(x_0),q(x_1)) \in \pker
_{\preceq,\operatorname{cong}
}g$. In particular, the map $\pker
_{\preceq,\operatorname{cong} }f\to \pker
_{\preceq,\operatorname{cong}
}g$ sending $(x_0,x_1)$ to $(q(x_0),q(x_1))$ is well defined. Finally, it is clear that this map is a precongruence $\preceq$-morphism.
\end{proof}
%
%
%
%
%
%
%

Consider the following commutative diagram of $\preceq$-morphisms:
\begin{equation}\label{snakeseq881}
\begin{tikzcd}
\mathcal M' \arrow{r}{q}\arrow{d}{f} & \mathcal N'\arrow{d}{g}\arrow{r}{p} & \mathcal{L}'\arrow{d}{h}
\\
\mathcal M\arrow{r}{l} & \mathcal N \arrow{r}{r} & \mathcal{L}
\end{tikzcd}
\end{equation}
 where $p$ is a
$\succeq$-onto homomorphism, $g$ is a homomorphism, and $l$ is a
$\preceq $-monic homomorphism.

We want a usable result that says  there exists a natural
$\tT$-equivariant map $d$ sending $ \ker_{\Mod,\mathcal L'}h$ to
some cokernel. Unfortunately well-definedness does not hold in
general, due to the difficulties with transitivity noted in
Remark~\ref{congprob}. We see two ways to overcome this, given the
set-up of \eqref{snakeseq881}.

\begin{thm}[Systemic connecting map]\label{snake1}

In addition to the hypotheses above, suppose that  the  top row is
$\preceq$-exact and the bottom row is $\succeq$-exact
(cf.~Definition~\ref{t-trivialmorphism}). Then there exists a
natural $\tT_\mathcal{A}$-equivariant map $$d: \ker_{\Mod,\mathcal
L'} h \to \coker f_{\operatorname{sys}}$$ (given in the proof). If
the $f(\mathcal M')$-minimality condition of Definition \ref{reve1}
holds, there exists a natural $\preceq$-morphism $d:
\ker_{\Mod,\mathcal L'} h \to \mathcal M$.
\end{thm}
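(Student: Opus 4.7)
\emph{Plan of proof.} My approach is to mimic the classical snake lemma diagram chase, being careful at each stage about which variant of the surpassing relation is available.

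First I would construct $d$ as follows. Given $z \in \ker_{\Mod,\mathcal{L}'} h$, use that $p$ is $\succeq$-onto to pick $y \in \mathcal{N}'$ with $p(y) \succeq z$. Commutativity of the right square, applied to $p(y) \succeq z$ via the $\preceq$-morphism $h$ and combined with $h(z) \succeq \zero$, gives $r g(y) = h p(y) \succeq h(z) \succeq \zero$, so $g(y) \in \ker_{\Mod,\mathcal{N}} r$. By $\succeq$-exactness of the bottom row at $\mathcal{N}$, produce $x \in \mathcal{M}$ with $l(x) \succeq g(y)$, and set $d(z) := [x]_f \in \coker f_{\operatorname{sys}}$.

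Second, I would verify well-definedness. Suppose $(y_i,x_i)$, $i=1,2$, are two such choices. From $p(y_i) \succeq z$, axiom~(ii) of Definition~\ref{precedeq07} gives $(-)p(y_2) \succeq (-)z$, and adding via axiom~(iii) yields
\begin{equation*}
p(y_1) (-) p(y_2) \succeq z \, (-) \, z = z^\circ \succeq \zero,
\end{equation*}
so $\tilde{y} := y_1 (-) y_2$ lies in $\ker_{\Mod,\mathcal{N}'} p$ (using that $p$ is a homomorphism). By $\preceq$-exactness of the top row there is $c \in \mathcal{M}'$ with $q(c) \preceq \tilde{y}$; applying the homomorphism $g$ and invoking commutativity of the left square gives $lf(c) = gq(c) \preceq g(y_1) \, (-) \, g(y_2)$. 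Under the surpassing relation of Example~\ref{precmain0}(iii), the hypothesis $l(x_i) \succeq g(y_i)$ means $l(x_i) = g(y_i) + n_i$ with $n_i \in \mathcal{N}_{\operatorname{Null}}$, so Proposition~\ref{nab} and closure of $\mathcal{N}_{\operatorname{Null}}$ under addition and $(-)$ produce
\begin{equation*}
l(x_1) (-) l(x_2) = \bigl(g(y_1) (-) g(y_2)\bigr) + (n_1 (-) n_2) \succeq g(y_1) (-) g(y_2) \succeq lf(c).
\end{equation*}
Since $l$ is a homomorphism, the left side is $l(x_1 (-) x_2)$, and $\preceq$-monicity of $l$ then yields $f(c) \preceq x_1 (-) x_2$, placing $x_1 \in [x_2]_f$, so $d(z)$ is unambiguous in $\coker f_{\operatorname{sys}}$.

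For $\tT_\mathcal{A}$-equivariance, if $(y,x)$ witnesses $d(z) = [x]_f$ then scaling by $t \in \tT_\mathcal{A}$ gives $p(ty) = tp(y) \succeq tz$ and $l(tx) = tl(x) \succeq tg(y) = g(ty)$, so $(ty,tx)$ is a valid choice for $tz$ and $d(tz) = [tx]_f = t\cdot d(z)$. For the refined statement, under the $f(\mathcal{M}')$-minimality condition each class $[x]_f$ has a unique minimal element $x_f \in \mathcal{M}$, and one redefines $d(z) := x_f$; monotonicity properties of the minimal representatives (noted after Definition~\ref{reve1}) then make this a $\preceq$-morphism.

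The hard part will be the well-definedness step, specifically comparing $l(x_1) (-) l(x_2)$ with $g(y_1) (-) g(y_2)$. The two hypotheses $l(x_i) \succeq g(y_i)$ combine in opposite directions under negation (axiom~(ii)), so a naive use of axiom~(iii) runs in the wrong direction. The trick is to realize the gaps $l(x_i) (-) g(y_i)$ as elements of $\mathcal{N}_{\operatorname{Null}}$ and absorb them using the closure properties of the null submodule; this is what the ``technical conditions'' in the statement must guarantee, e.g.\ that $\preceq$ coincides with $\preceq_{\Null}$ in the sense of Example~\ref{precmain0}(iii).
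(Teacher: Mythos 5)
Your construction of $d$ and the diagram chase for well-definedness closely mirror the paper's proof: pick a $\succeq$-preimage under $p$, push through $g$, pull back via $\succeq$-exactness of the bottom row, and land in $\coker f_{\operatorname{sys}}$; for well-definedness, subtract two choices, use $\preceq$-exactness of the top row, and conclude via $\preceq$-monicity of $l$. The $\tT_\mathcal{A}$-equivariance argument also matches.

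However, there is a misconception in your final paragraph (and in the well-definedness step it motivates) that leads you to import an unnecessary hypothesis. You assert that the two hypotheses $l(x_i) \succeq g(y_i)$ ``combine in opposite directions under negation (axiom~(ii)), so a naive use of axiom~(iii) runs in the wrong direction,'' and to fix this you restrict to the case $\preceq = \preceq_{\Null}$ of Example~\ref{precmain0}(iii). This is backwards: axiom~(ii) of Definition~\ref{precedeq07} says that $b_1 \preceq b_2$ implies $(-)b_1 \preceq (-)b_2$ --- the negation map \emph{preserves} $\preceq$, it does not reverse it as ordinary additive negation reverses an order on an ordered group. Hence from $g(y_2) \preceq l(x_2)$ one gets $(-)g(y_2) \preceq (-)l(x_2)$ directly, and axiom~(iii) then gives
\[
g(y_1)\,(-)\,g(y_2) \;\preceq\; l(x_1)\,(-)\,l(x_2) \;=\; l\bigl(x_1\,(-)\,x_2\bigr),
\]
with no detour through explicit Null-elements and no need to assume the $\preceq_{\Null}$ form. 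This is exactly how the paper argues. Your proof as written is still \emph{correct} under the extra assumption, but you have proved a weaker theorem than the one stated: the paper's ``technical conditions'' are just the hypotheses already listed around the diagram (\eqref{snakeseq881}: $p$ a $\succeq$-onto homomorphism, $g$ a homomorphism, $l$ a $\preceq$-monic homomorphism, top row $\preceq$-exact, bottom row $\succeq$-exact), and nothing about the shape of $\preceq$ is needed. You should delete the appeal to $\preceq_{\Null}$ and replace the ``absorbing the gaps into $\mathcal{N}_{\operatorname{Null}}$'' step by the direct application of axioms~(ii)--(iii).
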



\begin{proof}
The construction of $d$ is similar to the classical case.
 For each
$x_0 \in\ker_{\Mod,\mathcal L'} h$, we want to define an element
$d(x_0)$ in $\coker f_{\operatorname{sys}}$.
 Since $p$ is $\succeq$-onto, we have
$v_0\in {\mathcal N'}$ such that ${p}(v_0) \succeq x_0$. Hence,
\begin{equation}
rg(v_0) = hp(v_0)\succeq h(x_0) \in \mathcal
L_{\operatorname{Null}},
\end{equation}
so $g(v_0) \in \ker_{\Mod,\mathcal N} r$. From the
$\succeq$-exactness of the bottom row
 we have $u_0 \in
\mathcal{M}$ such that
\[
l(u_0) \succeq g(v_0).
\]
We then define $d(x_0)$ to be the image of $u_0$ in $\coker
f_{\operatorname{sys}}$.

We claim that $d$ is well-defined. In fact, suppose that we  have
$p(v_0') \succeq x_0$  as well as $ p(v_0) \succeq x_0$. Then
\begin{equation}
p(v_0'(-)v_0) = p(v_0')(-) p(v_0) \succeq x_0  (-)x_0 \succeq \zero
\end{equation}
since $p$ is a homomorphism. In particular, $v_0'(-)v_0 \in
\ker_{\Mod,\mathcal N'} p$. Now, the $\preceq$-exactness of the top
row shows that there exists $ y \in \mathcal M'$ such that
\[
{q}(y) \preceq v_0'(-)v_0.
\]
Since $g$ and $l$ are homomorphisms, 
\begin{equation}
{l}{f}(y ) = {g}{q}(y ) \preceq {g}(v_0'(-)v_0) = g(v_0') (-) g(v_0)
\preceq l(u_0')(-)l(u_0) = l(u_0'(-)u_0).
\end{equation}
Now, since ${l}$ is $\preceq$-monic we get  $ {f}(y)\preceq u_0'(-)
u_0$, so $u_0'$ and $ u_0$ map to the same element in $\coker
f_{\operatorname{sys}}$.

Clearly this map $d$ is $\tT_{\mathcal{A}}$-equivariant since for
each relation in the proof, multiplying an element of
$\tT_{\mathcal{A}}$ does not change anything.

If the $f(\mathcal M')$-minimality condition holds, take such $ u_0$
$f$-minimal (using the minimality condition). In proving uniqueness,
the minimality condition implies $u_0'=u_0$. Furthermore, if
$d(x_0+x_1)=u_2$, the minimality condition implies that $u_2 \preceq
u_0+u_1$, showing that $d$ is a $\preceq$-morphism in this case.
\end{proof}

%

 Here is an alternate version that holds in tropical math. In
the case that $(\mathcal A, \tT, (-))$ is  metatangible, we have a
natural algebraic semilattice structure on $\tT$ by putting $a
\wedge a = a \vee a = a,$ and $a \vee a' = a + a'$ for $a \ne a'.$
We say that a semilattice   is \textbf{complete} if any set has a
sup. A systemic module $\mathcal{M}$ is \textbf{complete} if its
underlying tangible set is complete. Thus, when $\mathcal{M}$ is
$(-)$-bipotent we can take infinite sums.

 Write $\tT_{ker,\mathcal L'}$ for the tangible elements of
$ \ker_{\Mod,\mathcal L'} h $, and analogously for
$\tT_{ker,\mathcal M'}$ and $\tT_{ker,\mathcal N'}$.
\begin{thm}(Tropical Connecting map)\label{snake01} Suppose that $\mathcal{M}$ is a free  module
 over a complete metatangible triple $(\mathcal A, \tT, (-))$.
 Then,  given the
set-up of \eqref{snakeseq881}, with $g$ tangible
(Definition~\ref{ordmp}) and the bottom row tangibly exact, there
exists a natural $\tT_\mathcal{A}$-equivariant map $d$ from
$\tT_{ker,\mathcal L'}$ to $\tT _M$ given in the proof.
\end{thm}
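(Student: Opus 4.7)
The plan is to transport the connecting-map construction of Theorem~\ref{snake1} down to the tangible layer, leveraging the four tropical hypotheses: $g$ is tangible, the bottom row is tangibly exact, $\mathcal M$ is free over a complete metatangible triple, and $l$ remains $\preceq$-monic. First, given $x_0\in\tT_{ker,\mathcal L'}$, I would produce a tangible preimage $v_0\in\tT_{\mathcal N'}$ with $p(v_0)\succeq x_0$. Since $p$ is $\succeq$-onto and every element of $\mathcal N'$ is (in the metatangible case) a sum of tangibles, the set $S_{x_0}:=\{v\in\tT_{\mathcal N'}:p(v)\succeq x_0\}$ is nonempty. Completeness of $\tT_{\mathcal N'}$ then allows us to set $v_0:=\bigwedge S_{x_0}$ as a canonical tangible preimage.

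Next, tangibility of $g$ forces $g(v_0)\in\tT_{\mathcal N}$, and commutativity of \eqref{snakeseq881} together with $h(x_0)\succeq\zero$ gives
$rg(v_0)=hp(v_0)\succeq h(x_0)\succeq\zero$,
so $g(v_0)\in \tT_{\mathcal N}\cap\ker_{\Mod,\mathcal N}r$. By tangible exactness of the bottom row we obtain $u_0\in\tT_{\mathcal M}$ with $l(u_0)=g(v_0)$, and this $u_0$ is unique because $l$ is $\preceq$-monic and, by freeness of $\mathcal M$, injective on the tangible stratum. Define $d(x_0):=u_0$.

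For $\tT_\mathcal{A}$-equivariance, note that for $a\in\tT_\mathcal{A}$ the relation $p(av)=ap(v)$ and monotonicity of the $a$-action on the bipotent complete semilattice give $S_{ax_0}=a\cdot S_{x_0}$, so $\bigwedge S_{ax_0}=a\cdot v_0$; pushing through the same argument yields $d(ax_0)=au_0=a\,d(x_0)$. Naturality in the diagram \eqref{snakeseq881} follows by checking that a morphism of such diagrams maps the chosen $v_0$ and $u_0$ to the corresponding canonical choices on the other side, which is automatic since homomorphisms respect infima of tangible sets in the complete bipotent setting.

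The main obstacle is well-definedness in the \emph{absence} of top-row exactness: unlike Theorem~\ref{snake1}, we cannot use a lifting of $v_0(-)v_0'$ through $q$ to compare two tangible preimages in a cokernel. Instead, well-definedness is achieved \emph{before} invoking $l$, by canonicalizing $v_0$ as an infimum in the complete semilattice $\tT_{\mathcal N'}$; every legitimate alternative tangible preimage $v_0'$ satisfies $v_0\preceq v_0'$, hence $g(v_0)\preceq g(v_0')$ and, by $\preceq$-monicity of $l$, pulls back to the same unique $u_0\in\tT_{\mathcal M}$. This is precisely where completeness is essential—it bypasses the transitivity pitfalls of Remark~\ref{congprob} that forced the generic statement in Theorem~\ref{snake1} to land in $\coker f_{\operatorname{sys}}$ rather than in $\mathcal M$ itself.
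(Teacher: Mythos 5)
Your overall architecture matches the paper's: canonicalize a tangible preimage $v_0$ of $x_0$ in $\mathcal N'$, push it through $g$, land in $\tT_{\mathcal N}\cap\ker_{\Mod,\mathcal N} r$, then use tangible exactness to pull back to $u_0\in\tT_{\mathcal M}$, and invoke $\preceq$-monicity of $l$ to force uniqueness. But there is a genuine gap in the canonicalization step, and it points in exactly the wrong direction.

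You set $v_0:=\bigwedge S_{x_0}$ with $S_{x_0}=\{v\in\tT_{\mathcal N'}:p(v)\succeq x_0\}$. In the bipotent (tropical) setting, a homomorphism $p$ preserves suprema (addition \emph{is} sup), not infima. Consequently $\sup S_{x_0}\in S_{x_0}$, but $\inf S_{x_0}$ need not be. A two-line counterexample: take $p:\mathcal A^{(2)}\to\mathcal A$, $p(v_1,v_2)=v_1+v_2=\max(v_1,v_2)$ over max-plus, and $x_0=\zero$. Then $(\zero,-\infty)$ and $(-\infty,\zero)$ both lie in $S_{x_0}$, but their coordinatewise infimum $(-\infty,-\infty)$ does not; so your ``canonical tangible preimage'' is not a preimage at all, and the whole construction of $g(v_0)$, and hence of $u_0$, collapses. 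The paper takes $v_0$ to be the \emph{sup} of all tangible $v$ with $p(v)\succeq x_0$, precisely because sups are respected by homomorphisms; your argument, as written, fails at step one.

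Two smaller issues. First, your well-definedness argument conflates the surpassing relation $\preceq$ (which restricts to \emph{equality} on tangibles) with the semilattice order $\le$ induced by addition: ``every alternative preimage $v_0'$ satisfies $v_0\preceq v_0'$'' is vacuous for tangibles and should be the order $\le$; but then ``$g(v_0)\preceq g(v_0')$ and, by $\preceq$-monicity of $l$, pulls back to the same $u_0$'' does not follow, since $\preceq$-monicity says nothing about $\le$. Second, ``by freeness of $\mathcal M$, injective on the tangible stratum'' is a non-sequitur; freeness of $\mathcal M$ is used in the paper only to reduce to $\mathcal M=\mathcal A$, and injectivity of $l$ on tangibles already follows from $\preceq$-monicity together with unique negation. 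If you replace your infimum by the supremum and tighten the order bookkeeping, the rest of your proof aligns with the paper's, including the good observation that completeness sidesteps the transitivity pitfalls of Remark~\ref{congprob} and lets $d$ land in $\tT_{\mathcal M}$ rather than in a cokernel.
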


\begin{proof}  The construction of $d$ is similar to the previous case. We may assume that $\mathcal{M}=\mathcal{A}$ since $\mathcal{M}$ is free. For each
$x_0 \in \tT_{ker,\mathcal L'}$, we want to define an element
$d(x_0)$ in $\coker f$. Since $p$ is $\succeq $-onto, we have $v\in
{\mathcal N'}$ such that $x_0 \preceq {p}(v)$, and $v$ must be
tangible. (This follows from \cite[Theorem~4.31]{Row16}.) Take $v_0$
to be the sup of all such $v$. Hence,
\begin{equation}
rg(v_0) = hp(v_0)\succeq h(x_0) \in \mathcal
L_{\operatorname{Null}},
\end{equation}
so $g(v_0) \in \ker_{\Mod,\mathcal N} r$. It follows from the
tangibly exactness
of the bottom row that 
\begin{equation}\label{defined}
\exists u_0 \in  \tT_{\mathcal M}  \textrm{ such that } g(v_0)
\preceq {l}(u_0) .
\end{equation}
We   define $d(x_0)$ to be the image of $ u_0  $ in $\coker {f}$;
$d$ is well-defined since there were no choices involved in its
definition. (If we had instead $u_0'$ with $ g(v_0) \preceq
{l}(u_0') $ then  $\zero \preceq g(v_0)(-) g(v_0)\preceq
{l}(u_0')(-){l}(u_0),  $ implying $\zero \preceq u_0' (-) u_0, $ so
$u_0' = u_0$ since both are tangible.


Finally, one can easily see that this map is indeed
$\tT_\mathcal{A}$-equivariant.

\end{proof}

\begin{rem} When $\mathcal{A}$ is  $(-)$-bipotent of height 2, $d$
provides a map $ \ker_{\Mod,\mathcal L'} h \to \tT_M$. Thus
Theorem~\ref{snake01} has useful content.
\end{rem}

%

\begin{lem}\label{chai}
Suppose that we have the same respective set-ups as in Theorems
\ref{snake1} and \ref{snake01}, with  all maps tangible in the
latter situation. We have the following respective sequences of
$\tT_\mathcal{A}$-modules with $\tT_{\mathcal{A}}$-equivariant maps:
\begin{itemize}
\item[(a)]
\begin{equation}\label{eq:snake sequence}
\ker_{\Mod,\mathcal M'}{f} \too {\tilde q} \ker_{\Mod,\mathcal
    N'}{g} \too {\tilde p} \ker_{\Mod,\mathcal L'} h \overset{d}
\longrightarrow \coker({f})_{\operatorname{sys}} \too {\bar
l}\coker({g})_{\operatorname{sys}}\too {\bar
    r}\coker({h})_{\operatorname{sys}},
\end{equation}

\item[(b)]
\begin{equation}\label{eq: snake sequence1}
\tT_{ker,\mathcal M'}{f} \too {\tilde q}\tT_{ker,\mathcal N'}{g}
\too {\tilde p}\tT_{ker,\mathcal L'} h \overset{d} \longrightarrow
\tT_{\mathcal M}\too {\bar l}\tT_{\mathcal N}\too {\bar
    r}\tT_{\mathcal L},\end{equation}
\end{itemize}

where:

\begin{enumerate}
\item $\tilde{q}$ and $\tilde{p}$ are the respective restrictions of
$q$ and $p$,

\item $d$ is the connecting map in Theorem \ref{snake1}, \ref{snake01} respectively,   and

\item  $\bar{l}$ and $\bar{r}$ are the respective restrictions of $l$ and $r$.
\end{enumerate}
\end{lem}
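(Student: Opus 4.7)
The plan is to verify that each of the six arrows in \eqref{eq:snake sequence} and \eqref{eq: snake sequence1} is well-defined and $\tT_\mathcal{A}$-equivariant. The connecting map $d$ was already constructed and shown to be $\tT_\mathcal{A}$-equivariant in Theorems~\ref{snake1} and \ref{snake01}, so only the four flanking arrows require attention, and everything should ultimately reduce to applications of Lemma~\ref{inducedmap} to the two commutative squares making up \eqref{snakeseq881}.

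First I would construct $\tilde q$ and $\tilde p$ by applying Lemma~\ref{inducedmap}(i) to the left and right squares of \eqref{snakeseq881}, respectively. Explicitly, if $x \in \ker_{\Mod,\mathcal M'} f$ then $f(x)\succeq \zero$, and since $l$ is a homomorphism, $g(q(x)) = l(f(x)) \succeq l(\zero) \succeq \zero$, so $q(x) \in \ker_{\Mod,\mathcal N'} g$; the identical argument with $(r,g)$ in place of $(l,f)$ handles $\tilde p$. Both restrictions inherit $\tT_\mathcal{A}$-equivariance from $q$ and $p$.

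Next, on the cokernel side, I would define $\bar l\colon \coker(f)_{\operatorname{sys}} \to \coker(g)_{\operatorname{sys}}$ by $[b]_f \mapsto [l(b)]_g$ and check well-definedness: if $b' \in [b]_f$, there is $c \in \mathcal M'$ with $f(c) \preceq b'(-)b$, and applying the homomorphism $l$ together with the left square gives
\[
g(q(c)) \;=\; l(f(c)) \;\preceq\; l(b'(-)b) \;=\; l(b')(-)l(b),
\]
so $l(b') \in [l(b)]_g$. The analogous verification using $r$, $p$, and the right square yields $\bar r$. Tangibility of the map $t\cdot[b]_f = [tb]_f$ follows at once from $l$ (resp.\ $r$) being a $\preceq$-morphism over $\tT_\mathcal{A}$. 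For the tropical version \eqref{eq: snake sequence1}, tangibility of the maps $q$, $p$, $l$, $r$ in Theorem~\ref{snake01} guarantees that the same restrictions send tangible kernel elements to tangible kernel elements and tangible cokernel representatives to tangible ones; the target $\tT_\mathcal{M}$ of $d$ is already furnished by \eqref{defined} in the proof of Theorem~\ref{snake01}.

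The main obstacle is subtle: the set $[b]_f$ occurring in $\coker(f)_{\operatorname{sys}}$ is defined by the surpassing condition $f(c)\preceq b'(-)b$ and is not, a~priori, the equivalence class under the congruence $f(\mathcal{M}')_{\operatorname{cong,sys}}$ (whose defining condition goes the other way, $f(c) \succeq b'(-)b$). So the well-definedness step above really uses two ingredients that need to be checked carefully: that $l$ is a homomorphism, so that $l(b'(-)b) = l(b')(-)l(b)$ splits correctly, and that the direction of $\preceq$ is preserved by $l$ (which holds because $l$ is a $\preceq$-morphism by Definition~\ref{ordmp}). No transitivity of the relation $[b]_f$ or of $f(\mathcal{M}')_{\operatorname{cong,sys}}$ is invoked, so the pitfalls of Remark~\ref{congprob} are avoided; the lemma is purely a compatibility statement, and no claim of exactness is being made here (that is deferred to Theorem~\ref{theorem: snake lemma}).
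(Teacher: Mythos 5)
Your proof is correct and follows essentially the same route as the paper's: verify well-definedness of $\tilde q,\tilde p$ by chasing the left and right commutative squares (this is indeed Lemma~\ref{inducedmap}(i) applied twice), and verify well-definedness of $\bar l,\bar r$ by applying the homomorphisms $l,r$ to the surpassing condition $f(c)\preceq b'(-)b$ defining $[\,\cdot\,]_f$, noting $d$ is already handled in Theorems~\ref{snake1}/\ref{snake01}. Your closing paragraph pointing out why the argument sidesteps the transitivity pitfalls of Remark~\ref{congprob} is a useful clarification that the paper leaves implicit.
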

\begin{proof}
$\textbf{(a)}:$ We have to prove that $\tilde{q},\tilde{p},\bar{l},\bar{r}$ are well defined. First, we claim that $\tilde{q}$ is well-defined. In fact, take any $x \in \ker_{\Mod,\mathcal M'}{f}$. From the commutative diagram \eqref{snakeseq881}, we have that
\[
g(q(x))=l(f(x)) \succeq \zero,
\]
showing that $q(x) \succeq \zero$, or equivalently, $\tilde{q}(x)
\in \ker_{\Mod,\mathcal
    N'}{g}$, showing that $\tilde{q}$ is well defined. One can easily see that $\tilde{p}$ is well-defined by the same argument.
Next, we claim that $\bar{l}$ and $\bar{r}$ are well-defined.
In fact, if $[b]=[b']$ in $\coker({f})_{\operatorname{sys}}$, then we have $c \in \mathcal M'$ such that $f(c) \preceq b'(-)b $. From the commutative diagram \eqref{snakeseq881}, we have that
\[
g(q(c))=l(f(c)) \preceq l(b') (-) l(b),
\]
showing that $[l(b')]=[l(b)]$ in $\coker(g)$. Similarly, $\bar{r}$ is well-defined.

$\textbf{(b)}:$ One can easily see that $\tilde{q}$, $\tilde{p}$,
$\bar{l}$ and $\bar{r}$ are well-defined from the exact same
argument as in $(a)$ with the fact that the maps send tangible
elements to tangible elements.
\end{proof}

 Then, we have the following
version of the Snake Lemma. The hypothesis in (i) is needed since
otherwise its proof would  reverse the direction.

\begin{thm}(Weak Systemic Snake Lemma)\label{theorem: snake lemma}
Suppose that we have the same respective set-up as in
Theorem~\ref{snake1}. Notation as above, the sequence
\eqref{eq:snake sequence} satisfies the following:
\begin{enumerate} \eroman
\item
If the top row of \eqref{snakeseq881} is exact, then
\[
\tilde{q}(\ker_{\Mod,\mathcal M'}{f})_\succeq = \ker_{\Mod,\mathcal H}{\tilde{p}},
\]
where $\mathcal {H}=\ker_{\Mod,\mathcal
    N'}{g}$.
\item
\[
\tilde{p}(\ker_{\Mod,\mathcal N'}{g}) \subseteq \{b \in \ker_{\Mod,\mathcal
    L'} h : d(b)=[\zero]\},
\]
where $[\zero]$ is the equivalence class of $\zero$ in $\coker(f)_{\operatorname{sys}}$.

\item
\[
d(\ker_{\Mod,\mathcal L'} h) \subseteq \{[b] \in \coker(f)_{\operatorname{sys}} :
\bar{l}([b])=[\zero]\},
\] where $[\zero]$ is the equivalence class
of $\zero$ in $\coker(g)$.

\item
If the bottom row of \eqref{snakeseq881} is exact, then
\[
\bar{l}(\coker(f)_{\operatorname{sys}}) \subseteq \{b' \in
\coker(g)_{\operatorname{sys}} : \bar{r}(b')=[\zero]\},
\]
where $[\zero]$ is the equivalence class of $\zero$ in $\coker(h)_{\operatorname{sys}}$.
\end{enumerate}
\end{thm}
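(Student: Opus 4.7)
The plan is to establish the four containments via a diagram chase, leveraging the well-definedness and $\tT_\mathcal{A}$-equivariance of $d$ from Theorem~\ref{snake1}, together with two structural facts: $l$ being $\preceq$-monic forces it to reflect $\succeq$ as well (since $l(a)\succeq l(b)$ rewrites as $l(b)\preceq l(a)$, whence $b\preceq a$), and each null-module kernel is a $\preceq$-submodule in the sense of Definition~\ref{modu218}. Parts (iv) and (iii) are essentially forced by the construction of $d$, part (ii) requires a single application of $\preceq$-monicness, and part (i) is the main obstacle.

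For (iv), given $[b]\in\coker(f)_{\operatorname{sys}}$ I would compute $\bar r(\bar l([b]))=[r(l(b))]$. Exactness of the bottom row at $\mathcal N$ yields $r(l(b))\succeq\zero$, so the witness $c=\zero$ with $h(c)=\zero\preceq r(l(b))$ shows $[r(l(b))]=[\zero]$ in $\coker(h)_{\operatorname{sys}}$. For (iii), given $x\in\ker_{\Mod,\mathcal L'}h$, I would unwind $d(x)$: the elements $v_0\in\mathcal N'$ with $p(v_0)\succeq x$ and $u_0\in\mathcal M$ with $l(u_0)\succeq g(v_0)$ yield $d(x)=[u_0]$; the choice $c:=v_0$ then gives $g(c)=g(v_0)\preceq l(u_0)$, so $\bar l([u_0])=[l(u_0)]=[\zero]$ in $\coker(g)_{\operatorname{sys}}$.

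For (ii), take $y\in\ker_{\Mod,\mathcal N'}g$. Since $h(p(y))=r(g(y))\succeq r(\zero)=\zero$, we have $\tilde p(y)=p(y)\in\ker_{\Mod,\mathcal L'}h$. To compute $d(p(y))$, the natural lift $v_0=y$ satisfies $p(v_0)=p(y)$, and $\succeq$-exactness of the bottom row applied to $g(y)\in\ker_{\Mod,\mathcal N}r$ produces $u_0\in\mathcal M$ with $l(u_0)\succeq g(y)\succeq\zero=l(\zero)$. The $\preceq$-monic property of $l$ upgrades this to $u_0\succeq\zero$, so $f(\zero)=\zero\preceq u_0$ certifies $d(p(y))=[u_0]=[\zero]$ in $\coker(f)_{\operatorname{sys}}$.

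Part (i) is the main obstacle. For $\supseteq$, given $b'\in\ker_{\Mod,\mathcal H}\tilde p$, exactness of the top row produces $x\in\mathcal M'$ with $q(x)=b'$, and the identity $l(f(x))=g(q(x))=g(b')\succeq\zero=l(\zero)$ combined with $\preceq$-monicness of $l$ yields $f(x)\succeq\zero$, whence $\tilde q(x)=b'\succeq b'$ puts $b'$ in $\tilde q(\ker_{\Mod,\mathcal M'}f)_\succeq$. The $\subseteq$ direction is the delicate part: when $q(x)\succeq b'$ with $x\in\ker_{\Mod,\mathcal M'}f$, the chase yields only $g(b')\preceq g(q(x))=l(f(x))\succeq\zero$, two comparisons pointing in opposing directions that do not compose to the required $g(b')\succeq\zero$ or $p(b')\succeq\zero$. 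The hard step will be to bridge this gap, presumably by decomposing $q(x)\succeq b'$ via Example~\ref{precmain0}(i) as $q(x)=b'+c^\circ$ for a quasi-zero $c^\circ$, then applying the homomorphisms $g$ and $p$ to isolate $g(b')$ and $p(b')$ modulo the null submodules $\mathcal N_{\operatorname{Null}}$ and $\mathcal L_{\operatorname{Null}}$, in conjunction with the $\preceq$-submodule closure of $\ker_{\Mod,\mathcal H}\tilde p$ under addition of elements surpassing $\zero$.
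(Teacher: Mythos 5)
Your arguments for parts (ii), (iii), (iv), and the $\supseteq$ direction of (i) are correct and follow the same route the paper takes: for (ii) you produce $u_0\succeq\zero$ via $\preceq$-monicness of $l$ and certify $[u_0]=[\zero]$ with the witness $c=\zero$; for (iii) the witness $c=v_0$ gives $g(v_0)\preceq l(u_0)$; for (iv) exactness of the bottom row at $\mathcal N$ gives $r(l(b))\succeq\zero$. All of this matches the paper's proof.

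The gap you flag in the $\subseteq$ direction of (i) is genuine, and your sketched repair does not close it. Writing $q(x)=b'+c^\circ$ and applying $g$ yields only $g(b')+g(c)^\circ=l(f(x))\in\mathcal N_{\Null}$; since $\mathcal N_{\Null}$ is a $\preceq$-submodule it is closed under \emph{adding} null elements (Definition~\ref{modu218}, Lemma~\ref{circget7}), not under passing to a $\preceq$-smaller summand, so one cannot extract $g(b')\succeq\zero$. The identical obstruction blocks $p(b')\succeq\zero$: exactness of the top row puts $q(x)$ in $\ker_{\Mod,\mathcal N'}p$, and $b'\preceq q(x)$, but being $\preceq$ a null element does not make an element null (in the supertropical model a tangible $a$ satisfies $a\preceq a^\circ$ while $a\notin\mathcal A_\Null$). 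What is worth noting is that the paper's own proof of this inclusion silently replaces the hypothesis $q(u)\succeq b$ coming from the $\succeq$-image by the stronger $q(u)=b$: its displayed chain $g(b)=g(q(u))=l(f(u))\succeq\zero$ uses equality, and the ``clearly $p(b)\succeq\zero$'' line likewise needs $b=q(u)$. The argument as given establishes $\tilde q(\ker_{\Mod,\mathcal M'}f)=\ker_{\Mod,\mathcal H}\tilde p$ for the plain image; the $\succeq$ subscript in the statement is not supported by the proof. So you have not overlooked a trick --- you have located a real soft spot, and the decomposition route you propose will not fill it without an additional downward-closure hypothesis on the null sets involved.
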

\begin{proof}
\textbf{(i)} For $b \in \tilde{q}(\ker_{\Mod,\mathcal
M'}{f})_\succeq$, clearly $p(b) \succeq \zero$ from the exactness of
the top row of \eqref{snakeseq881}. Furthermore $g(b) \succeq
\zero$. Indeed, from our assumption, we have  $u \in
\ker_{\Mod,\mathcal M'}{f}$ such that $q(u)= b$. It follows from the
commutative diagram \eqref{snakeseq881} that
\[
g(b)= g(q(u))=l(f(u)) \succeq l(\zero) \succeq \zero,
\]
showing that $g(b) \succeq \zero$. This shows that  $\tilde{q}(\ker_{\Mod,\mathcal M'}{f})_\succeq \subseteq \ker_{\Mod,\mathcal H}{\tilde{p}}$. Conversely, let $a \in \ker_{\Mod,\mathcal H}{\tilde{p}}$, that is, $p(a)\succeq \zero $ and $g(a) \succeq \zero$. But, since $q(\mathcal M')= \ker_{\Mod,\mathcal N'}{p}$ from the exactness assumption, there exists $x \in \mathcal M'$ such that $q(x)=a$. Furthermore we have
\[
l(f(x))=g(q(x))=g(a) \succeq \zero,
\]
showing that $f(x) \succeq \zero$ since $l$ is $\preceq$-monic. In particular, $x \in \ker_{\Mod,\mathcal M'}{f}$. This implies that $a \in \tilde{q}(\ker_{\Mod,\mathcal M'}{f})_\succeq$.
\vspace{0.3cm}

\textbf{(ii)} Let $x \in \ker_{\Mod,\mathcal N'}{g}$. We know from
Lemma \ref{chai} that $b:= p(x) \in \ker_{\Mod,\mathcal L'} h$. We
claim that $d(p(x))=[\zero]$. In fact, from the definition of $d$ in
Theorem \ref{snake1}, we have the following:
\begin{equation}\label{eq: Theorem 5.12 (ii)}
\begin{tikzcd}
 & x\arrow{d}{g}\arrow{r}{p} & b
\\
u\arrow{r}{l} & g(x)  &
\end{tikzcd} \quad \textrm{ such that } p(x)=b,~~ l(u)\succeq g(x) \succeq \zero,~~d(b)=[u].
\end{equation}
Since $l$ is $\preceq$-monic, \eqref{eq: Theorem 5.12 (ii)} implies that $u \succeq \zero$, showing that $[u]=[\zero]$ in $\coker(f)_{\operatorname{sys}}$ since, in this case, we have that $f(\zero)=\zero \preceq u (-) \zero$. This shows that $\tilde{p}(\ker_{\Mod,\mathcal N'}{g}) \subseteq \{b \in \ker_{\Mod,\mathcal
    L'} h : d(b)=[\zero]\}$.
\vspace{0.3cm}


\textbf{(iii)} Suppose $x_0 \in \ker_{\Mod,\mathcal L'} h$. We can
find $u_0$ and $v_0$ as in the proof of Theorem~\ref{snake1} such
that $d(x_0)=[u_0]$, $p(v_0) \succeq x_0$, $r(g(v_0)) \succeq
\zero$, and $l(u_0) \succeq g(v_0)$. But, the last condition that
$l(u_0) \succeq g(v_0)$ directly implies
\[
\bar{l}(u_0)=[l(u_0)]=[\zero]
\]
in $\coker(g)_{\operatorname{sys}}$, showing that
\[
d(\ker_{\Mod,\mathcal L'} h) \subseteq \{[b]\in \coker(f)_{\operatorname{sys}}:
\bar{l}([b])=[\zero]\}.
\]




\vspace{0.3cm}

\textbf{(iv)} Suppose that $[b] \in \coker(f)_{\operatorname{sys}}
$. Then we have, from the exactness of the bottom row, $r(l(b)) \succeq \zero$. In particular, we have
\[
\bar{r}(\bar{l}(b))=\bar{r}([l(b)])=[r(l(b))]=[\zero],
\]
showing that $\bar{l}(\coker(f)_{\operatorname{sys}}) \subseteq \{b' \in
\coker(g)_{\operatorname{sys}} : \bar{r}(b')=[\zero]\}$.
\end{proof}

\begin{remark}(Minimal Weak Snake Lemma)
Assume the $f$-minimality condition holds, with the set-up of \ref{snake01}, where all maps are tangible. With the same set-up as in Theorem \ref{theorem: snake lemma}, one can easily prove a similar result (``Minimal Weak Snake Lemma'') about the sequence \eqref{eq: snake sequence1}, where $\tilde{\phantom{w}}$ denotes the restriction to
the kernel and $\bar{\phantom{w}}$ denotes the induced  map on the image.
\end{remark}

\section{Categorical aspects in the perspective of homological categories}\label{Grand}$ $

The objective of this section is to show how the spirit of this
paper, via triples, is consistent with work in the  more abstract
categorical-theoretic literature, thereby leading towards a
categorical description of systems. We only take the first step in
this direction, laying out the guidelines for future research. All
of the categories considered here are locally small.

Grandis in \cite{grandis2013homological} introduced the notion of
homological categories to investigate homological properties of
various categories such as the category of modules over a semiring
or the category of pairs of topological spaces. See
\cite[pp.~9]{grandis2013homological} for a comprehensive list of
examples.

We conjecture that the category of systemic ${\mathcal
    A}$-modules yields a homological category in the sense of Grandis
\cite{grandis2013homological}.   The idea is to view everything in
terms of morphisms, so $\mathbf A$ would be replaced by Hom sets,
and $\tT$ would be replaced by distinguished subsets.   In this
treatment the null morphisms play
a prominent role. 

\subsection{Categories with negation}$ $


We start by considering the negation map categorically. We assume, generalizing ``preadditive,'' that $\Hom
(A,B)$, the set of morphisms from $A$ to $B$, is an additive
semigroup for all objects $A,B$.

\begin{defn}
Let $\mathcal{C}$ be a category.
A \textbf{categorical ideal of morphisms} of $\mathcal{C}$ is a set $N$ of morphisms
such that, for $f \in N$, the composite $hfg$ (when it is defined)
is also in $N$ for any morphisms $h,
g$.\footnote{\cite[\S~1.3.1]{grandis2013homological} calls this an
``ideal'' but we prefer to reserve this terminology for semirings.}
The  morphisms in $N$ are called \textbf{null morphisms}.
\end{defn}

\begin{defn} \label{Definition; negation functor}
Let $\mathcal{C}$ be a category with a categorical ideal $N$. A \textbf{negation functor} on $\mathcal{C}$ is an endofunctor   $(-): \mathcal C \to \mathcal
    C $ satisfying $(-)\mathbf A = \mathbf A$ for each object $\mathbf
    A,$ and, for all morphisms $f,g$:
    \begin{enumerate}\label{negpro}\eroman
        \item  $(-)((-)f)= f$.
        \item $(-)(fg) = ((-)f)g = f((-)g)$, i.e., any composite of morphisms
        (if defined) commutes with $(-)$.
        \item  $f(-)f \in N$ for any $f$.\footnote{By this notation,
        we mean $f+(-)f$. This makes sense since we assume that $\Hom(A,B)$ is an additive semigroup.
        Alternatively, more in line with \cite{CC2}, one could take $N$ to be $\{ f: f = (-)f \}.$
        This is more inclusive since $f^\circ = (-) f^\circ.$ On the
        other hand when $(-)$ is the identity, $f = (-)f$ always.}
   \end{enumerate}
    \textbf{Unique negation} means $f(-)g \in N$ implies
    $g=f.$
\end{defn}

\begin{remark}
    Note that being a negation functor depends on a choice of a categorical ideal $N$ because of Definition \ref{Definition; negation functor} (iii).
\end{remark}

When lacking a negation functor, we obtain one via the categorical
version of symmetrization, performing Example~\ref{semidir38}  at
the level of $\Hom,$ and thus also obtaining a symmetrization
functor.

\begin{defn}
Let $\mathcal{C}$ be a pointed category. The symmetrized category $\widehat {\mathcal{C}}$ of $\mathcal{C}$ consists of the following.
\begin{enumerate}
    \item
Objects; the pairs $(A,A)$ for each object $A$ of $\mathcal C$.
\item
Morphisms; for $(A,A), (B,B) \in \emph{Obj}(\widehat{\mathcal C})$,
\[
\Hom_{\widehat{\mathcal{C}}}((A,A),(B,B))=\{(f,g) \mid f,g \in \Hom_\mathcal{C} (A,B)\}.
\]
\end{enumerate}
The composition of two morphisms are given by the \textbf{twist
product}\footnote{See \cite[Equation (22)]{CC2}]. This is a
categorical version of the twist product defined for modules in
Definition \ref{wideh7}. For application to tropical geometry, see \cite{JoM}.}, that is, for $(f_1,f_2):(B,B) \to (C,C)$
and $(g_1,g_2):(A,A) \to (B,B)$,
\begin{equation}\label{eq: twist morphism}
(f_1,
f_2)(g_1,g_2) = ( f_1 g_1 + f_2 g_2,  f_2 g_1+  f_1 g_2)
\end{equation}
\end{defn}

\begin{prop}
Let $\mathcal{C}$ be a pointed category. Then, $\widehat{\mathcal{C}}$ is a category.
\end{prop}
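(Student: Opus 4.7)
The plan is to verify directly the three category axioms for $\widehat{\mathcal{C}}$: composition is well-defined, identity morphisms exist, and composition is associative.

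First I would observe that well-definedness of \eqref{eq: twist morphism} is immediate: given $(f_1,f_2)\in \Hom_{\widehat{\mathcal C}}((B,B),(C,C))$ and $(g_1,g_2)\in \Hom_{\widehat{\mathcal C}}((A,A),(B,B))$, each composite $f_i g_j$ lies in $\Hom_{\mathcal C}(A,C)$, and by the running assumption that $\Hom_{\mathcal C}(A,C)$ is an additive semigroup the sums $f_1 g_1 + f_2 g_2$ and $f_2 g_1 + f_1 g_2$ make sense, yielding an element of $\Hom_{\widehat{\mathcal C}}((A,A),(C,C))$.

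For identities, since $\mathcal C$ is pointed there is a zero morphism $0_A\in \Hom_{\mathcal C}(A,A)$ for every object $A$, which is the neutral element of the additive semigroup $\Hom_{\mathcal C}(A,A)$ and which absorbs under composition in the usual sense ($f\cdot 0_A = 0$ and $0_A\cdot g = 0$ in the appropriate Hom-sets). I would set $1_{(A,A)} := (1_A,0_A)$ and compute, for $(f_1,f_2)\colon (B,B)\to(C,C)$,
\[
(f_1,f_2)(1_B,0_B) \;=\; (f_1\cdot 1_B + f_2\cdot 0_B,\; f_2\cdot 1_B + f_1\cdot 0_B) \;=\; (f_1,f_2),
\]
and symmetrically on the left, using that zero morphisms absorb and are additively neutral.

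The real work is associativity. I would expand
\[
\bigl((f_1,f_2)(g_1,g_2)\bigr)(h_1,h_2) \quad\text{and}\quad (f_1,f_2)\bigl((g_1,g_2)(h_1,h_2)\bigr)
\]
using \eqref{eq: twist morphism} and invoke bilinearity of composition over $+$ in $\mathcal C$ (the two-sided distributive law inherited from the ``generalizing preadditive'' hypothesis) to break each coordinate into a sum of triple composites $f_i g_j h_k$. Both expansions then reduce to
\[
\bigl(f_1 g_1 h_1 + f_2 g_2 h_1 + f_2 g_1 h_2 + f_1 g_2 h_2,\; f_2 g_1 h_1 + f_1 g_2 h_1 + f_1 g_1 h_2 + f_2 g_2 h_2\bigr),
\]
so the two sides coincide. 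A clean way to organise the bookkeeping is to note that the first coordinate collects exactly the triple composites $f_i g_j h_k$ with an even number of indices equal to $2$, and the second coordinate collects those with an odd number---this is the parity pattern coming from the $\mathbb Z/2$-grading behind the twist product, and once this is observed the associativity calculation is mechanical.

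The only delicate point I anticipate is the tacit two-sided distributivity of composition over addition; one-sided distributivity alone would leave a discrepancy between left and right expansions. Provided the paper's preadditive-style enrichment gives both distributive laws (as is standard and appears to be intended), all three axioms are satisfied and $\widehat{\mathcal C}$ is a category.
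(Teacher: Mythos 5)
Your proposal is essentially the same argument as the paper's: identify $(1_A,0_A)$ as the identity and verify associativity by expanding the twist product, using bilinearity of composition over addition in the Hom-semigroups. The parity observation and the explicit remark about two-sided distributivity are welcome added clarity, but the route is identical.
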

\begin{proof}
For each object $(A,A)$, one can easily check that the morphism $(1_A,0_A):(A,A) \to (A,A)$ is the identity morphism. The composition defined by \eqref{eq: twist morphism} is associative since
 $$\begin{aligned}
 ((f_1,
f_2)(g_1,g_2))(h_1,h_2) & = ( (f_1 g_1 + f_2 g_2)h_1 + ( f_2 g_1+
f_1 g_2)h_2, ( f_1 g_1 + f_2 g_2)h_2 + ( f_2 g_1+  f_1 g_2)h_1) \\
& = ( f_1 (g_1 h_1  +   g_2 h_2) +  f_2( g_1 h_2+   g_2h_1),   f_1
(g_1 h_2 + g_2h_1) +  f_2( g_1 h_1+   g_2  h_2)) \\ & = (f_1,
f_2)((g_1,g_2)(h_1,h_2))
\end{aligned}$$
\end{proof}

\begin{defn}
Let $\mathcal{C}$ be a pointed category and $\widehat{\mathcal{C}}$ be the symmetrized category of $\mathcal{C}$. We define the following endofunctor, called the \textbf{switch}, on $\widehat{\mathcal{C}}$; for any $(f_1,f_2) \in \Hom_{\widehat{\mathcal{C}}}((A,A),(B,B))$, \[
(-)_{\operatorname{sw}}(f_1, f_2) = (f_2,f_1).
\]
\end{defn}

\begin{remark}
As we mentioned above, we assume that the set $\Hom_\mathcal{C}(A,B)$ is an additive semigroup for any objects $A,B$ of $\mathcal{C}$. This will induce additive semigroup structure on $\Hom_{\widehat{\mathcal{C}}}((A,A),(B,B))$ (in a coordinate-wise way).
\end{remark}

\begin{prop}
With the same notation as above, $(-)_{\operatorname{sw}}: \widehat{\mathcal{C}} \to \widehat{\mathcal{C}}$ is a negation functor with the categorical ideal:
\[
N_{\widehat {\mathcal{C}}} = \{(f,f): f \in \Hom_{\mathcal C} (A,B), ~\forall A,B \in \emph{Obj}(\mathcal{C})\}.
\]
\end{prop}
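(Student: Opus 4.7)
The plan is to verify, step by step, the defining clauses of a negation functor (Definition~\ref{Definition; negation functor}) plus the ideal condition on $N_{\widehat{\mathcal{C}}}$, all through direct computation with the twist product \eqref{eq: twist morphism}. Since $(-)_{\operatorname{sw}}$ fixes every object $(A,A)$ by construction, the verification reduces entirely to checking its action on $\Hom$-sets.

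First I would check that $N_{\widehat{\mathcal{C}}}$ is a categorical ideal. Given $(f,f) \in N_{\widehat{\mathcal{C}}}$ and arbitrary composable morphisms $(g_1,g_2)$ and $(h_1,h_2)$, the twist product collapses to $(f,f)(g_1,g_2) = (f(g_1+g_2),\, f(g_1+g_2))$ and $(h_1,h_2)(f,f) = ((h_1+h_2)f,\, (h_1+h_2)f)$, both again of diagonal form. Iterating, any sandwich $(h_1,h_2)(f,f)(g_1,g_2)$ lies in $N_{\widehat{\mathcal{C}}}$, so the ideal property holds. Property (iii) of Definition~\ref{Definition; negation functor} then follows from the coordinatewise addition inherited on $\Hom_{\widehat{\mathcal{C}}}$: $(f_1,f_2) + (-)_{\operatorname{sw}}(f_1,f_2) = (f_1+f_2,\, f_1+f_2) \in N_{\widehat{\mathcal{C}}}$.

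Property (i), the involutivity $(-)_{\operatorname{sw}}(-)_{\operatorname{sw}}(f_1,f_2) = (f_1,f_2)$, is immediate. For property (ii), unfolding the twist product gives $(-)_{\operatorname{sw}}((f_1,f_2)(g_1,g_2)) = (f_2g_1 + f_1g_2,\, f_1g_1 + f_2g_2)$, which coincides with both $(f_2,f_1)(g_1,g_2) = ((-)_{\operatorname{sw}}(f_1,f_2))(g_1,g_2)$ and $(f_1,f_2)(g_2,g_1) = (f_1,f_2)((-)_{\operatorname{sw}}(g_1,g_2))$; thus (ii) holds, and the same computation doubles as the verification that $(-)_{\operatorname{sw}}$ is compatible with composition.

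The only genuine subtlety — rather than a true obstacle — is reconciling the word ``endofunctor'' with axiom (ii): identity preservation together with (ii) would force $(-)_{\operatorname{sw}}(f) = (-)_{\operatorname{sw}}(1 \cdot f) = ((-)_{\operatorname{sw}}1)\cdot f = f$, trivializing the negation, so I would read ``endofunctor'' in the sense already used in Definition~\ref{negmap} for $\tT$-modules, namely as a semigroup endomap on $\Hom$-sets of order at most $2$ that acts as the identity on objects but need not send $(1_A, 0_A)$ to itself. Once this convention is fixed, every clause is purely formal arithmetic in the twist product and nothing further obstructs the proof.
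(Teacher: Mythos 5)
Your proof is correct and follows essentially the same computational route as the paper: verify the ideal property of $N_{\widehat{\mathcal{C}}}$ by unwinding the twist product, then check clauses (i)--(iii) of Definition~\ref{Definition; negation functor} coordinatewise. One small point in your favor: the paper's verification of clause (iii) computes $(f,f) + (-)_{\operatorname{sw}}(f,f) = (f+f,f+f)$, which only treats morphisms already of diagonal form, whereas clause (iii) requires $h + (-)h \in N$ for \emph{every} morphism $h$ of $\widehat{\mathcal{C}}$. Your version $(f_1,f_2) + (-)_{\operatorname{sw}}(f_1,f_2) = (f_1+f_2,\,f_1+f_2)$ handles the general case and is the check one actually needs. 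Your closing remark on the tension between the word ``endofunctor'' and the ``super''-style commutation rule in clause (ii) is also apt; that tension is exactly why the paper states $(-)(fg)=((-)f)g=f((-)g)$ rather than strict functoriality, and your reading of it as a $\Hom$-level involution rather than an identity-preserving functor matches the intent.
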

\begin{proof}
We first claim that $N_{\widehat {\mathcal{C}}}$ is a categorical ideal of $\widehat{\mathcal{C}}$. Indeed, we have
\[
(f_1,f_2)(g,g) = (f_1 g + f_2 g, f_1 g + f_2 g) \in
N_{\widehat {\mathcal{C}}},
\]
and
\[
(g,g)(f_1,f_2) = (g f_1 + g f_2 , g f_1 + g f_2) \in N_{\widehat {\mathcal{C}}},
\]
proving $N_{\widehat {\mathcal{C}}}$ is a categorical ideal of
morphisms.

Next, we prove that the functor $(-)_{\operatorname{sw}}$ satisfies the conditions in Definition \ref{Definition; negation functor}. Clearly we have
\[
(-)_{\operatorname{sw}}((-)_{\operatorname{sw}}(f,g))=(-)_{\operatorname{sw}}(g,f)=(f,g).
\]
Now, consider $(f_1,f_2),(g_1,g_2)$ such that $f_i:B \to C$ and $g_i:A \to B$ for $i=1,2$. Then, we have
\[
(-)_{\operatorname{sw}}((f_1,f_2)(g_1,g_2))=(-)_{\operatorname{sw}}(( f_1 g_1 + f_2 g_2,  f_2 g_1+  f_1 g_2))=(f_2 g_1+  f_1 g_2,f_1 g_1 + f_2 g_2 )
\]
\[
=(f_2,f_1)(g_1,g_2)=((-)_{\operatorname{sw}}(f_1,f_2))(g_1,g_2).
\]
Similarly, one can show that
\[
(-)_{\operatorname{sw}}((f_1,f_2)(g_1,g_2))=(f_1,f_2)((-)_{\operatorname{sw}}(g_1,g_2)),
\]
showing the second condition.

Finally, we have
\[
(f,f)(-)_{\operatorname{sw}}(f,f)=(f,f)+(f,f)=(f+f,f+f) \in N_{\widehat {\mathcal{C}}}.
\]
\end{proof}

By combining the above results, we conclude the following.

\begin{thm}\label{catview}
Let $\mathcal{C}$ be a pointed category. Suppose that $\mathcal{C}$ is equipped with a negation functor $(-)$ with respect to a categorical ideal $N$. Then, there is a faithful functor $F:{\mathcal{C}} \to \widehat {\mathcal{C}}$ such that $F(A)=(A,A)$ for an object $A$, and $F(f)=(f,0_A)$ for $f \in \Hom_\mathcal{C} (A,B)$.
\end{thm}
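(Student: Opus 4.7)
The plan is to verify the three standard requirements for $F$ to be a faithful functor: well-definedness on objects and morphisms, preservation of identities, and preservation of composition, concluding with the faithfulness check. Throughout, I will use the pointed-category hypothesis, which gives a zero morphism $0_{A,B} \in \Hom_\mathcal{C}(A,B)$ for each pair $A,B$, and the standing assumption that each $\Hom_\mathcal{C}(A,B)$ is an additive semigroup in which $0$ serves as the additive identity and is absorbing under composition (i.e., $0 \circ f = 0$ and $f \circ 0 = 0$). These are the only ingredients needed for the twist product to behave well when one coordinate is zero.

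First I would check well-definedness: $F(A) = (A,A)$ is manifestly an object of $\widehat{\mathcal{C}}$, and for $f \in \Hom_\mathcal{C}(A,B)$, the pair $(f, 0_{A,B})$ lies in $\Hom_{\widehat{\mathcal{C}}}((A,A),(B,B))$ by the definition of the latter. Next, to see that $F$ preserves identities, I would compute, for the pair $(1_A, 0_A)$ acting on an arbitrary $(g_1, g_2) : (X,X) \to (A,A)$, that
\[
(1_A, 0_A)(g_1, g_2) \;=\; (1_A g_1 + 0_A g_2,\; 0_A g_1 + 1_A g_2) \;=\; (g_1, g_2),
\]
using that $0_A \circ g_i = 0$ and that $0$ is the additive identity. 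A symmetric computation from the right shows $(1_A, 0_A)$ is two-sided identity on $(A,A)$ in $\widehat{\mathcal{C}}$, matching the identity used in the proposition that $\widehat{\mathcal{C}}$ is a category. Hence $F(1_A) = (1_A, 0_A) = 1_{(A,A)}$.

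For preservation of composition, given $f : B \to C$ and $g : A \to B$ in $\mathcal{C}$, I would compute directly from the twist product:
\[
F(f)\, F(g) \;=\; (f, 0_{B,C})(g, 0_{A,B}) \;=\; (fg + 0\cdot 0,\; 0 \cdot g + f \cdot 0) \;=\; (fg,\, 0_{A,C}) \;=\; F(fg).
\]
Again this uses only that zero composes to zero and is the additive identity, so no additional structure on $\mathcal{C}$ is needed. Finally, faithfulness is immediate: if $F(f) = F(f')$ for $f, f' \in \Hom_\mathcal{C}(A,B)$, then $(f, 0_{A,B}) = (f', 0_{A,B})$ forces $f = f'$ by coordinatewise equality.

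There is no substantive obstacle in the proof itself; the content is carried entirely by the explicit formula for the twist product. The only point demanding care is the implicit convention on zero morphisms, namely that they are both absorbing for composition and neutral for addition. Since this is what makes the identity morphism $(1_A, 0_A)$ work in the preceding proposition establishing $\widehat{\mathcal{C}}$ as a category, it is consistent to use it here as well. I note that the extra hypothesis that $\mathcal{C}$ itself carries a negation functor $(-)$ with respect to some categorical ideal $N$ plays no role in the construction of $F$; the statement and proof would go through for an arbitrary pointed category (as in the preliminary Theorem F of the introduction), so I would either remark on this redundancy or simply not invoke $(-)$ on $\mathcal{C}$ in the argument.
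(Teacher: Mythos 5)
Your proof is correct and follows the same direct computation as the paper's, which simply verifies $F(1_A)=(1_A,0_A)=1_{(A,A)}$, computes $F(gf)=(gf,0)=(g,0)(f,0)=F(g)F(f)$ from the twist product, and notes that faithfulness is immediate from the first coordinate. Your closing observation is also accurate: the hypothesis that $\mathcal{C}$ carries a negation functor with respect to $N$ plays no role in the argument, and the paper's own Theorem~F in the introduction states the result for an arbitrary pointed category.
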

\begin{proof}
 We first prove that $F$ is a functor. In fact, $F(1_A)=(1_A,0_A)=1_{(A,A)}$. Furthermore,
 \[
 F(gf)=(gf,0)=(g,0)(f,0)=F(g)F(f),
 \]
showing that $F$ is a functor. One can easily see that $F$ is faithful.
\end{proof}
%

\subsection{Towards semiexact categories and homological
categories}\label{homcat}$ $

  In this subsection, we briefly explore how our framework of systems is related to more categorical framework of Grandis \cite{grandis2013homological} as well as Connes and Consani's work \cite{CC2}. In particular, we prove that the category of $\mathcal{A}$-modules is semiexact (under certain condition), which we expect to be a homological category. We further investigate the category of systemic modules in this context. To make this section self-contained, we briefly recall the definition of semiexact category and homological category.

\begin{defn}
Let $\mathcal{C}$ be a category.
\begin{enumerate} \eroman
\item
A categorical ideal $N$ is said to be \textbf{closed} if there
exists a set $\mathcal{O}$ of objects in $\mathcal{C}$ such that
\begin{equation}\label{eq:cld ideal}
N=\{f \in \emph{Mor}(\mathcal{C}) \mid
\textrm{ $f$ factors
    through some object in $\mathcal O$}\}.
\end{equation}
\item
An \textbf{N-category} is a category with a specified categorical
closed ideal $N$ of morphisms.
\end{enumerate}
\end{defn}

From the systemic point of view, the  set $\mathcal{O}$ is comprised
of objects whose elements are quasi-zeros as the following example illustrates.

\begin{example} \label{example: A-mod}
Let $(\mathcal A,\tT, (-))$  be a triple, and
$\textbf{Mod}_\mathcal{A}$ be the category of systemic
$\mathcal{A}$-modules. For each $\mathcal{M}\in
\textbf{Mod}_\mathcal{A}$, recall that we have a submodule:
\[
\mathcal{M}^\circ=\{a(-)a \mid a \in \mathcal{M}\}.
\]
Consider the following set
\[
\mathcal{O}=\{\mathcal{M}^\circ \mid \mathcal{M} \in \textbf{Mod}_\mathcal{A}\}.
\]
It is clear that $N$ defined as in \eqref{eq:cld ideal} is a closed categorical ideal of $\textbf{Mod}_\mathcal{A}$. Hence, $\textbf{Mod}_\mathcal{A}$ is an $N$-category.
\end{example}
%
%
%
%
%
%
%

One ingredient of homological category is to replace the zero object (or zero morphism) of an abelian category with the set of zero objects (or zero morphisms) through a closed categorical ideal $N$. To be precise, Gradis introduced the following:

\begin{defn}
Let $(\mathcal C, N)$ be an $N$-category. Let $f:X \to Y$ be a morphism of $\mathcal{C}$.
\begin{enumerate}\eroman
\item
The \textbf{N-kernel}~\footnote{In \cite{grandis2013homological},
this is called the kernel with respect to $N$.} of $f$ is a morphism
$\ker f: \emph{Ker} f \to X$ satisfying the usual universal property with
respect to $N$, that is, $\ker f$ satisfies the following conditions:
\begin{enumerate}
    \item
The composite $f(\ker f)$ is null, i.e., $f(\ker f) \in N$.
    \item
For any morphism $h$, if $fh$ is defined and null, then $h$ uniquely factors through $\ker f$.
\end{enumerate}
\item
The \textbf{N-cokernel} of $f$ is a morphism $\coker f: Y \to \emph{Coker}(f)$ (suitably defined) satisfying the usual universal property with respect to $N$: The composite  $f(\coker f) \in N,$ and for any morphism $h$, if $hf$ is defined and null, then $h$ uniquely factors through $\coker f$.
\item
A morphism $f$ is said to be \textbf{N-monic} (resp.~ \textbf{N-epic})  if $fh$ (resp.~$kf$)  is null,  then $h$ is null (resp.~$k$ is null) for any morphism $h$ (resp.~$k$) where $fh$ (resp.~ $kf$) is defined.
\item
A \textbf{normal} N-monic (resp.~\textbf{normal} N-epic) is an N-monic (resp.~N-epic) which is an N-kernel (resp.~N-cokernel) of some morphism.
\end{enumerate}
\end{defn}

We now recall the definition of semiexact category and
homological category. We then prove that the category of
$\mathcal{A}$-modules (satisfying some mild condition) is semiexact.
We leave open the problem of proving (or disproving) that the
category of $\mathcal{A}$ modules (with some suitable condition) is
homological.

Let $(\mathcal C, N)$ be an $N$-category. Suppose that a morphism $f:X \to Y$ has an N-kernel and N-cokernel. Then $f$ uniquely factors through its \emph{normal coimage} $\textrm{Ncm} f:=\coker(\ker f)$ and \emph{normal image} $\textrm{Nim} f:=\ker(\coker f)$ as follows:
\begin{equation}\label{diagram}
\begin{tikzpicture}[baseline=-0.8ex]
\matrix(m)[matrix of math nodes, row sep=3em, column sep=3.5em, text
height=1.5ex, text depth=0.25ex]
{\textrm{Ker}  f&X&Y & \textrm{Coker} f\\
    \textrm{ }& \textrm{Ncm} f& \textrm{Nim} f\\}; \path[->,font=\scriptsize] (m-1-1) edge
(m-1-2) (m-1-2) edge node[auto] {f} (m-1-3) edge node[auto]
{p}(m-2-2) edge (m-2-3) (m-2-3) edge node[auto] {m}(m-1-3) (m-1-3)
edge (m-1-4) edge (m-2-2) (m-2-2) edge[dotted] node[auto] {g}
(m-2-3);
\end{tikzpicture}
\end{equation}

\begin{defn}\label{exact}
    A morphism $f:X \to Y$ is \textbf{exact} (in the sense of Grandis) if there exists an isomorphism $g$
    which makes \eqref{diagram} commute.\footnote{Once it exists, $g$ as in the above diagram is unique. See \cite[\S 1.5.5]{grandis2013homological}.}
\end{defn}

Now, we  recall the definition of a homological category from \cite{grandis2013homological}.

\begin{defn}\cite[\S 1]{grandis2013homological}\label{semiandexct}
    Let $(\mathcal C, N)$ be an $N$-category.
    \begin{enumerate}
        \item
        $(\mathcal C, N)$ is said to be a \textbf{semiexact} category if every morphism $f:X\to Y$ has an N-kernel and N-cokernel.
        \item
        $(\mathcal C, N)$ is said to be \textbf{homological}  if the following conditions hold:
        \begin{enumerate}
            \item
            $(\mathcal{C},N)$ is semiexact.
            \item
            Normal monics and normal epics are closed under composition.
            \item
            For any normal monic $m:M \to X$ and normal epic $q:X \to Q$
            such that $m \geq \ker q$ (i.e., $m$ factors through $\ker q$), the
            morphism $qm$ is exact in the sense of Definition \ref{exact}.
        \end{enumerate}
    \end{enumerate}
\end{defn}

\begin{example}[{\cite[\S1.4.2]{grandis2013homological}}]\label{grandisex} Let $\mathfrak{Set}_2$ be the category of pairs of sets, i.e., an object is a pair $(A,B)$ of a set $A$ and a subset $B$ of $A$. A morphism from $(A,B)$ to $(A',B')$ is a function $f:A \to A'$ such that $f(B) \subseteq B'$. A morphism $f:(A,B) \to (A',B')$ is null if and only if $f(A)\subseteq
    B'$. Then $\mathfrak{Set}_2$ becomes a homological category.
\end{example}

Now, we prove that the category of $\circ$-idempotent
$\mathcal{A}$-modules is semiexact.

\begin{thm}\label{proposition: modules semiexact}
Let $(\mathcal A,\tT, (-))$ be a triple, and
$\textbf{Mod}_{\mathcal{A},h}$ be the subcategory of
$\textbf{Mod}_\mathcal{A}$ with:
    \begin{enumerate}
        \item
    Objects: The $\circ$-idempotent $\mathcal{A}$-modules, cf., Definition~\ref{signdec1}.
       \item
    Morphisms: homomorphisms of $\mathcal{A}$-modules.
    \end{enumerate}
    With $N$ as in Example \ref{example: A-mod}, $\textbf{Mod}_{\mathcal{A},h}$ is a semiexact category.
\end{thm}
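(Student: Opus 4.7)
The plan is to exhibit explicit N-kernels and N-cokernels for an arbitrary homomorphism $f:\mathcal M\to\mathcal N$ of $\circ$-idempotent $\mathcal A$-modules, noting that being null relative to the ideal $N$ of Example~\ref{example: A-mod} is equivalent to having image inside $(\udscr)^\circ$ (since any homomorphism $\mathcal L^\circ\to \mathcal N$ sends $c(-)c\mapsto \varphi(c)(-)\varphi(c)\in\mathcal N^\circ$). This converts the two universal properties into concrete submodule/quotient conditions.

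For the N-kernel, the candidate is the $\preceq$-submodule $K:=f^{-1}(\mathcal N^\circ)=\{b\in\mathcal M:f(b)\in\mathcal N^\circ\}$ together with the inclusion $i:K\hookrightarrow\mathcal M$. First I would check $K$ is actually a $\preceq$-submodule of $\mathcal M$ (closed under addition, scalar action, negation, and absorbing elements of $\mathcal M_{\Null}$), and that $K$ inherits $\circ$-idempotence from $\mathcal M$. Then $f\circ i$ is null because it factors $K\to\mathcal N^\circ\hookrightarrow\mathcal N$. For the universal property, if $h:P\to\mathcal M$ is a homomorphism with $f\circ h$ null, then by the observation above $f(h(p))\in\mathcal N^\circ$ for every $p$, so $h(P)\subseteq K$ and $h$ factors uniquely through $i$.

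For the N-cokernel, I would form $C:=\mathcal N/\Cong$, where $\Cong$ is the congruence on $\mathcal N$ generated by the pairs $\{(f(x),f(x)^\circ):x\in\mathcal M\}$, with projection $\pi:\mathcal N\to C$. By Lemma~\ref{lemma: quotient module}, $C$ is an $\mathcal A$-module with negation; $\circ$-idempotence passes to quotients since $([b]^\circ)^\circ=[(b^\circ)^\circ]=[b^\circ]=[b]^\circ$. By construction $[f(x)]=[f(x)^\circ]=[f(x)]^\circ$, so $\pi(f(x))\in C^\circ$ and $\pi\circ f$ is null. For the universal property, suppose $h:\mathcal N\to D$ with $D\in\textbf{Mod}_{\mathcal A,h}$ and $h\circ f$ null, so $h(f(x))\in D^\circ$ for every $x$. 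Write $h(f(x))=d(-)d$; then by $\circ$-idempotence of $D$,
\[
h(f(x)^\circ)=h(f(x))^\circ=(d(-)d)^\circ=(d^\circ)^\circ=d^\circ=d(-)d=h(f(x)),
\]
so $h$ respects the generators of $\Cong$, and being a homomorphism it respects the whole congruence, hence factors uniquely through $\pi$.

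The main obstacle I anticipate is not in the kernel side (which is almost formal once one identifies null morphisms with ``image in $(\udscr)^\circ$''), but in pinning down the correct generators for the cokernel congruence: one might be tempted to generate by $(f(x),\zero)$, which is too strong to enjoy the universal property because a null composite $h\circ f$ only forces $h(f(x))$ to be some quasi-zero, not necessarily $\zero$. The key point that makes the looser generators $(f(x),f(x)^\circ)$ work is precisely the $\circ$-idempotence hypothesis $(b^\circ)^\circ=b^\circ$ in the target $D$, which collapses the two possible interpretations of ``being a quasi-zero'' on the image of $f$. This is also exactly the reason one restricts to the subcategory $\textbf{Mod}_{\mathcal A,h}$ rather than working in all of $\textbf{Mod}_{\mathcal A}$.
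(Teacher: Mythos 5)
Your proposal is correct, and your identification of the N-kernel as $f^{-1}(\mathcal N^\circ)$ with the inclusion map coincides with the paper's choice $\ker_{\Mod,X}f \hookrightarrow X$ (which, under $\preceq_\circ$, is exactly the preimage of the quasi-zeros). The two proofs diverge only in how the cokernel congruence is produced, and the difference is worth noting. The paper defines $\Cong$ by a universal (intersection) characterization: $(y,y')\in\Cong$ iff $g(y)=g(y')$ for every homomorphism $g:Y\to Z$ with $gf$ null; with this definition the universal property of the cokernel becomes almost tautological, and the work goes into showing that the projection $\pi f$ is itself null, which the paper does by verifying $g(f(c))=g(f(c)^\circ)$ for all the relevant $g$, invoking $\circ$-idempotence of the target $Z$. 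You instead define $\Cong$ by the explicit generators $(f(x),f(x)^\circ)$; then nullity of $\pi f$ is immediate from the defining relations, and the nontrivial step migrates to the universal property, where you verify $h(f(x))=h(f(x)^\circ)$ using $\circ$-idempotence of $D$ and then invoke that a congruence containing the generators contains all of $\Cong$. A short argument shows the two congruences are literally equal (each null $g$ identifies the generator pairs, so your $\Cong$ is contained in the paper's; conversely your projection $\pi$ is itself a null-composite witness, so the paper's $\Cong$ is contained in $\ker_N\pi$), so these are two presentations of the same object. What your version buys is concreteness: the generators are written down explicitly, and the precise role of $\circ$-idempotence --- collapsing ``$h(f(x))$ is some quasi-zero'' to ``$h(f(x))=h(f(x))^\circ$'' --- is made visible at the exact spot where it is used. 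Your closing remark about why $(f(x),\zero)$ would be the wrong set of generators is also a genuinely useful clarification that the paper leaves implicit.
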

\begin{proof}
Let $f: X \to Y$ be a homomorphism of $\mathcal{A}$-modules. As
explained in Example \ref{precmain0}, we may impose a surpassing
relation $\preceq_\circ$ on $X$ and $Y$ as follows:
\begin{equation}\label{eq: add surpassing relation}
c \preceq_\circ c' \iff c+b^\circ = c'.
\end{equation}
For notational convenience, we will just write
$\preceq_\circ=\preceq$. We will consider all $\mathcal{A}$-modules
in this way.

With \eqref{eq: add surpassing relation}, we first claim that the following is an $N$-kernel of $f$:
\begin{equation}\label{eq: kernel}
\ker f:\ker_{\Mod,X}{f} \longrightarrow X.
\end{equation}
Note that \eqref{eq: kernel} makes sense since $\ker f$ is a homomorphism and $\ker_{\Mod,X}{f}$ is a submodule of $X$. To prove our claim, for any $x \in \ker_{\Mod,X}{f}$, we have $f(x) \succeq \zero$, that is, $f(x)=b(-)b$ for some $b \in Y$. In particular, $f(x) \in Y^\circ$. Therefore, we have the following commutative diagram:
\begin{equation}
\begin{tikzcd}
\ker_{\Mod,X}{f} \arrow{r}{\ker f}\arrow{rd}{f}
&X \arrow{r}{f}
&Y \\
&Y^\circ \arrow[hook]{ru}
\end{tikzcd}
\end{equation}
showing that $f(\ker f)$ is null.

Next, suppose that $h:Z \to X$ is a homomorphism such that $fh$ is null. Since $fh$ is null, there exists an $\mathcal{A}$-module $\mathcal{N}$ such that the following commutes:
\[
\begin{tikzcd}
Z \arrow{r}{h}\arrow{rd}[swap]{\varphi}
&X \arrow{r}{f}
&Y \\
&\mathcal{N}^\circ \arrow{ru}[swap]{\psi}
\end{tikzcd}
\]
for some homomorphisms $\varphi$ and $\psi$. Indeed, one can
observe, for each $z \in Z$, $f(h(z)) \in Y^\circ$ since $fh=\psi
\varphi$ and $\varphi(z) \in \mathcal{N}^\circ$. In
particular, $h(z) \in \ker_{\Mod,X}{f}$, and hence $h$ uniquely
factors through $\ker f$. This proves that $\ker f:\ker_{\Mod,X}{f} \longrightarrow X$ is an $N$-kernel of $f$.

Now, we prove that the $N$-cokernel exists. We use the idea of Connes and Consani in \cite{CC2} to define the $N$-cokernel. For a given homomorphism $f:X \to Y$, we define
\[
\textrm{Coker}f:=Y/\Cong,
\]
where for $y, y' \in Y$,
\begin{equation}\label{eq: cokernel condition1}
(y , y') \in \Cong \iff g(y)=g(y') ~\forall Z ~\forall g \in
\Hom(Y,Z)  \textrm{ such that }f(X) \subseteq
\ker_{\Mod,X}{g}.
\end{equation}
We let $\coker f:Y \to \textrm{Coker}f$ be the projection map. This definition makes sense from the following two observations:
\begin{enumerate}
    \item
$\Cong$ is a congruence relation since $g$ is a homomorphism.
Furthermore, for any $[y] \in Y/\Cong$, where $[y]$ is the
equivalence class of $y \in Y$ in $Y/\Cong$, we have that
$([y]^\circ)^\circ = [(y^\circ)^\circ]=[y^\circ]=[y]^\circ$. Hence,
$\textrm{Coker}f=Y/\Cong$ is $\circ$-idempotent.
\item
The projection $\coker f:Y \to Y/\Cong$ is a homomorphism.
\end{enumerate}

Now, we claim that for any $c \in X$, we have
\[
[f(c)]=[f(c)^\circ].
\]
In fact, suppose that $g \in \Hom(Y,Z)$ such that $f(X) \subseteq
\ker_{\Mod,X}{g}$. This implies that $gf(X) \subseteq Z^\circ$.
Hence, $gf(c)=b^\circ$ for some $b \in Z^\circ$. It follows that
\[
g(f(c)^\circ) = gf(c)^\circ=(b^\circ)^\circ=b^\circ=gf(c).
\]
In particular, we have   $[f(c)]=[f(c)^\circ]$. This shows the
following diagram commutes, proving that $(\coker f) f \in N$:
\begin{equation}
\begin{tikzcd}
X \arrow{r}{f}\arrow[swap]{rd}{f^\circ}
&Y \arrow{r}{\coker f}
&Y/\Cong \\
&Y^\circ \arrow[swap]{ru}{(\coker f)|_{Y^\circ}}
\end{tikzcd}
\end{equation}

Next, suppose that $hf \in N$ for some $h:Y \to Z$. Then, there exists a module $\mathcal{M}$ and the following commutative diagram:
\begin{equation}
\begin{tikzcd}
X \arrow{r}{f}\arrow[swap]{rd}{\varphi}
&Y \arrow{r}{h}
&Z \\
&\mathcal{M}^\circ \arrow[swap]{ru}{\psi}
\end{tikzcd}
\end{equation}

Since $hf$ factors through $\mathcal{M}^\circ$, we have that $f(X) \subseteq \ker_{\Mod,X}{h}$. In particular, from \eqref{eq: cokernel condition1}, the following map is well-defined:
\[
\tilde{\varphi}: \textrm{Coker}f \longrightarrow Z, \quad [y] \mapsto h(y).
\]
Furthermore, in this case, we have that $h =  \tilde{\varphi}(\coker f)$, showing that $h$ factors through $\textrm{Coker}f$. One can easily see that this is unique.

This completes the proof that $\textbf{Mod}_{\mathcal{A},h}$ is semiexact.

\end{proof}

\begin{rem}
In the proof of Theorem \ref{proposition: modules semiexact},
 one may observe that for the $N$-kernel, we do not need the assumption
  that morphisms are homomorphisms or that our modules are $\circ$-idempotent.
  On the other hand, the $N$-cokernel is trickier than the $N$-kernel. For instance, as
we mentioned before, for a $\preceq$-morphism $f:X \to Y$, the image
$f(X)$ does not have to be a submodule of $Y$ since we only have
$f(x+y) \preceq f(x)+f(y)$ which is not enough to ensure that
$f(x)+f(y) \in f(X)$. Towards this end, we have to restrict
ourselves exclusively to homomorphisms. Furthermore,
$\circ$-idempotence of modules is essential in our proof to show
that $(\coker f) f \in N$.
\end{rem}

\begin{remark}
In \cite[Theorem 6.12]{CC2}, Connes and Consani proved that the
category of $\mathbb{B}$-modules endowed with an involution is a
homological category.  Much of their theory can be restated for
modules over triples. We expect that the category of
$\mathcal{A}$-modules in Example \ref{example: A-mod} may be
homological. Although we do not pursue this in this paper, one may
be able to prove this by closely following the proofs in \cite{CC2}.
\end{remark}

Since $\preceq_\circ$ is a special case of surpassing relation, one
may ask whether or not the category of systemic modules is
semiexact. To some extent, this seems to be doable, for example for
convex images.

Let $\mathcal{A}$ be a system, and $\mathcal{C}$ be the category of systemic $\mathcal{A}$-modules. For each $\mathcal{M}\in \mathcal{C}$, recall that we have
\[
\mathcal{M}_{\emph{Null}}=\{a \in \mathcal{M} \mid a \succeq \zero\}.
\]
Consider the following set
    \[
    \mathcal{O}=\{\mathcal{M}_{\emph{Null}} \mid \mathcal{M} \in \mathcal{C}\}.
    \]
    It is clear that $N$ defined as in \eqref{eq:cld ideal} is a closed categorical ideal of $\mathcal{C}$. Hence, $\mathcal{C}$ is an $N$-category.

\begin{prop}\label{proposition: N-kernel}
In $\mathcal{C}$, any $\preceq$-morphism $f:X \to Y$ has an $N$-kernel given as follows:
\[
\ker f:\ker_{\Mod,X}{f} \longrightarrow X.
\]
\end{prop}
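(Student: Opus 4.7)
The plan is to verify the two defining axioms of an $N$-kernel for the inclusion $\ker f: \ker_{\Mod,X}{f} \hookrightarrow X$: namely that (a) $f \circ \ker f$ lies in $N$, and (b) any $\preceq$-morphism $h: Z \to X$ with $fh \in N$ factors uniquely through $\ker f$.

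For axiom (a), I would show that $f$ restricted to $\ker_{\Mod,X}{f}$ lands inside $Y_{\Null}$, so that it factors as $\ker_{\Mod,X}{f} \to Y_{\Null} \hookrightarrow Y$, which is a factorization through an object in the distinguished family $\mathcal{O}$. Since $\ker_{\Mod,X}{f}$ is by Definition~\ref{modker} the sum of $f$-null $\preceq$-submodules $X_i$, and each $X_i$ satisfies $f(X_i) \subseteq Y_{\Null}$, any $a = \sum a_i$ with $a_i \in X_i$ satisfies $f(a) \preceq \sum f(a_i)$ by the $\preceq$-morphism axiom, and $\sum f(a_i) \in Y_{\Null}$ because $Y_{\Null}$ is closed under addition. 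Using that $Y_{\Null}$ is itself a $\preceq$-submodule (it absorbs elements $\succeq \zero$ per Definition~\ref{modu218}), I would promote this to $f(a) \in Y_{\Null}$.

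For axiom (b), suppose $h: Z \to X$ is a $\preceq$-morphism with $fh \in N$, so there is a factorization $fh = \beta \alpha$ with $\alpha: Z \to \mathcal{M}_{\Null}$ and $\beta: \mathcal{M}_{\Null} \to Y$ both $\preceq$-morphisms. For each $z \in Z$, $\alpha(z) \succeq \zero$, and by monotonicity of $\beta$ together with $\beta(\zero) = \zero$, we obtain $f(h(z)) = \beta(\alpha(z)) \succeq \zero$. I would then argue that the $\preceq$-submodule $\langle h(Z) \rangle_\preceq$ of $X$ generated by the image of $h$ is $f$-null, and therefore is contained in $\ker_{\Mod,X}{f}$ by the very definition of the latter as the sum of all $f$-null $\preceq$-submodules. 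The desired factorization $\tilde h: Z \to \ker_{\Mod,X}{f}$ is then simply $h$ with codomain corestricted, and uniqueness is automatic because $\ker f$ is an inclusion of sets.

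The principal obstacle in both parts is the same: promoting the $\preceq$-morphism inequality $f(a+b) \preceq f(a) + f(b)$ into genuine membership in $Y_{\Null}$. Addition in the image only yields an element that surpasses $f(a+b)$, whereas what is needed is that $f(a+b)$ itself is $\succeq \zero$. Bridging this gap requires combining the $\preceq$-submodule closure axiom of Definition~\ref{modu218} (that $Y_{\Null}$ is closed under adding any $c \succeq \zero$) with the order-properties of $\preceq$ from Definition~\ref{precedeq07}, and using that $f$-null $\preceq$-submodules of $X$ are in particular $\preceq$-submodules so that the sum operation respects the needed closure conditions coming from $Z \to X$.
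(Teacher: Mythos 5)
Your plan — verify $f(\ker f) \in N$ and the universal property for maps $h$ with $fh$ null — is the same strategy the paper relies on (its proof is literally a cross-reference to the homomorphism case, Theorem~\ref{proposition: modules semiexact}). However, the step in part (a) where you ``promote'' $f(a) \preceq \sum f(a_i) \in Y_{\Null}$ to $f(a) \in Y_{\Null}$ is not justified, and Definition~\ref{modu218} cannot do what you ask of it. The closure axiom there goes \emph{upward}: if $b \in \mathcal{N}$ and $c \succeq \zero$ then $b+c \in \mathcal{N}$. What you actually need is a \emph{downward} inference: that $f(a) \preceq c$ with $c \succeq \zero$ forces $f(a) \succeq \zero$. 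Nothing in Definition~\ref{precedeq07} yields this; indeed in a bipotent height-2 setting a tangible $t$ with $t$ small enough satisfies $t \preceq_\circ c^\circ$ without being null. Your closing paragraph rightly flags this as the principal obstacle, but then asserts the gap can be bridged by combining Definitions~\ref{modu218} and~\ref{precedeq07}, which is not the case.

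The reason the referenced proof works in Theorem~\ref{proposition: modules semiexact} is precisely that $f$ there is a homomorphism, so $f(\sum a_i) = \sum f(a_i) \succeq \zero$ holds with \emph{equality} — the inequality from the $\preceq$-morphism axiom never enters. For a general $\preceq$-morphism, the two natural readings of $\ker_{\Mod,X}{f}$ each leave a hole: under the ``sum of $f$-null $\preceq$-submodules'' reading of Definition~\ref{modker}, the domain is a $\preceq$-submodule but axiom (a) is exactly what is unproved; under the ``preimage of $Y_{\Null}$'' reading used in the Introduction and in the proof you are emulating, axiom (a) is tautological but the preimage need not be closed under addition, so the inclusion $\ker f$ need not be a morphism of $\mathcal{C}$ at all. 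Your strategy is thus consistent with the paper's, but the specific mechanism you invoke at the crucial step does not close the argument, and a transcription of the homomorphism proof does not suffice for $\preceq$-morphisms; an additional hypothesis or a genuinely different argument would be needed here.
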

\begin{proof}
The same proof as in Theorem \ref{proposition: modules semiexact}
works.

\end{proof}

\begin{prop}\label{proposition: N-cokernel systemic}
Let $\mathcal{C}_{h}$ be the category of systemic modules with
homomorphisms, viewed   as an $N$-category with the same $N$. Let
$f:X \to Y$ be a homomorphism. Then, with
 $\coker f:\emph{Coker}f \to Y/\Cong$ as in Theorem \ref{proposition: modules semiexact}, if $hf \in N$ for some $h:Y \to Z$, then $h$ uniquely factors through $\emph{Coker} f$.
\end{prop}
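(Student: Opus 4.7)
The plan is to reuse the construction of $\textrm{Coker}f = Y/\Cong$ introduced in the proof of Theorem~\ref{proposition: modules semiexact} and to verify that the universal property half of the argument given there goes through without the $\circ$-idempotence hypothesis. Only the part establishing $(\coker f)f \in N$ required $\circ$-idempotence; the factorization argument for arbitrary $h$ with $hf \in N$ is formally independent of it.

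First I would unpack the hypothesis $hf \in N$. Since $N$ is the closed categorical ideal generated by the family $\{\mathcal{M}_{\textrm{Null}}\}$, the assumption means $hf$ factors as $X \to \mathcal{M}_{\textrm{Null}} \to Z$ for some systemic $\mathcal{A}$-module $\mathcal{M}$. Consequently $hf(X) \subseteq Z_{\textrm{Null}}$, equivalently $h(f(x)) \succeq \zero$ for every $x \in X$, which is exactly the condition $f(X) \subseteq \ker_{\Mod,Y}{h}$ appearing in the defining clause of $\Cong$ (applied with the particular homomorphism $g = h$ into $Z$).

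Next I would define the candidate factorization $\tilde{h}: Y/\Cong \to Z$ by $\tilde{h}([y]) = h(y)$. Well-definedness is immediate: if $(y,y') \in \Cong$ then, by the defining property of $\Cong$ applied to our particular $h$ (which, by the previous step, satisfies $f(X) \subseteq \ker_{\Mod,Y}{h}$), we get $h(y) = h(y')$. Since $h$ is a homomorphism and $\coker f: Y \to Y/\Cong$ is a surjective homomorphism (noting that $Y/\Cong$ is indeed a systemic module by Lemma~\ref{lemma: quotient module}), the map $\tilde{h}$ inherits the structure of a homomorphism of systemic modules. By construction $\tilde{h} \circ (\coker f) = h$. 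Uniqueness of $\tilde{h}$ is then automatic: any other map $\tilde{h}': Y/\Cong \to Z$ with $\tilde{h}' \circ (\coker f) = h$ must satisfy $\tilde{h}'([y]) = h(y) = \tilde{h}([y])$ on every class, because every element of $Y/\Cong$ is of the form $[y]$ for some $y \in Y$.

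There is no substantive obstacle in this proof — the work is essentially a bookkeeping verification extracted from the proof of Theorem~\ref{proposition: modules semiexact}. The only minor point deserving explicit mention is that we are now working in $\mathcal{C}_h$ (homomorphisms rather than $\preceq$-morphisms), which is precisely what allows the addition-preserving property of $\tilde{h}$ to be checked directly from that of $h$; this is in contrast to the $\preceq$-morphism case, where the image $f(X)$ need not be closed under addition and the cokernel universal property is correspondingly more delicate. Thus the proposition should follow cleanly, and the proof will be two short paragraphs at most.
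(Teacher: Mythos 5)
Your proposal is correct and takes the same approach as the paper, which simply cites the proof of Theorem~\ref{proposition: modules semiexact}; you have accurately reconstructed the universal-property half of that argument and correctly observed that the $\circ$-idempotence hypothesis was needed only for the separate claim $(\coker f)f \in N$, not for the factorization asserted here.
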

\begin{proof}
The same proof as in Theorem \ref{proposition: modules semiexact}
works.
\end{proof}

 An affirmative answer to the following question could provide a more general context for Connes and Consani's result:

\begin{oprblm}
Let $(\mathcal A,\tT, (-))$ be a triple, or $(\mathcal A,\tT, (-),
\preceq)$ a system. Find general conditions for the category of
$\mathcal A$-modules to be homological.
\end{oprblm}

\end{document}